\newcommand{\calA}{\mathcal{A}}
\newcommand{\vol}{\mathsf{vol}}
\newcommand{\bs}[1]{\boldsymbol{#1}}
\definecolor{brickred}{rgb}{0.42, 0.22, 0.10}
\begin{document}
\title{Randomized weakly admissible meshes}
\date{}

\author[1,2]{\small Yiming Xu}
\author[1,2]{\small Akil Narayan}
\affil[1]{\small Department of Mathematics, University of Utah\\
       \texttt{yxu@math.utah.edu}}
\affil[2]{\small Scientific Computing and Imaging Institute, University of Utah\\
       \texttt{akil@sci.utah.edu}}
\renewcommand\Authands{and}
  
\maketitle

\begin{abstract}
A weakly admissible mesh (WAM) on a continuum real-valued domain is a sequence of discrete grids such that the discrete maximum norm of polynomials on the grid is comparable to the supremum norm of polynomials on the domain. The asymptotic rate of growth of the grid sizes and of the comparability constants must grow in a controlled manner. In this paper, we generalize the notion of a WAM to a hierarchical subspaces of not necessarily polynomial functions, and we analyze particular strategies for random sampling as a technique for generating WAM's. Our main results show that WAM's and their stronger variant, admissible meshes (AM's), can be generated by random sampling, and our analysis provides concrete estimates for growth of both the meshes and the discrete-continuum comparability constants.\end{abstract}

\begin{keyword}
Admissible meshes, Near-isometry, Norming sets, Random sampling, Weighted covering
\end{keyword}

\section{Introduction}

\subsection{Background}

Generating a discrete set to approximate a continuum is a task that arises in many areas of applied computational science. A concrete example is that of computing a so-called weakly admissible (discrete) mesh for polynomials. Given a compact domain $D \subset \R^d$ and a fixed $n \in \N$, consider a discrete set $\calA_n \subset D$ such that
\begin{align}\label{eq:linf-inequality}
  \sup_{x \in D} \left| p(x) \right| &\leq C_n \max_{x \in \calA_n} \left| p(x) \right|, & p \in P_n,
\end{align}
where $P_n$ is the subspace of algebraic $d$-variate polynomials of degree $n$ or less, and $C_n$ is some finite constant that may depend on $n$. With $N_n \coloneqq \dim P_n$, any sequence of meshes $\left\{\calA_n \right\}_{n=0}^\infty$ is called a \emph{weakly admissible mesh} (WAM) \cite{calvi_uniform_2008} if there exist absolute constants $a, b, C, C'<\infty$ such that $|\mathcal A_n|\leq CN_n^a$ and $C_n\leq C'N_n^b$ for all $n\in\N$. It is an \textit{admissible} mesh (AM) if $b=0$; see Definition \ref{def:Vwam} for more formal statements.
An ``optimal'' WAM would boast the smallest values of these exponents, $a = 1$, $b = 0$, since large $a$ implies increased growth of the mesh as $n$ increases, and large $b$ implies increased growth of the discrete-continuum equivalence constant as $n$ grows.
Admissible sets or meshes are closely related to norming sets, which provide a foundation for discretizations of continuum domains in function (in particular polynomial) approximation \cite{jetter1998norming,calvi_uniform_2008,kroo_optimal_2011,piazzon_optimal_2016}.

Note that the reverse inequality of \eqref{eq:linf-inequality} is straightforward (with constants $1$), so that a weakly admissible mesh yields equivalences between continuous and discrete supremum norms. This fact has been used to great effect in generating provably attractive meshes for polynomial approximation. However, it can be relatively difficult to generate such meshes for general domains $D$, and all existing constructions are essentially deterministic in nature. 

The purpose of this article is to show that random sampling (i.e., comprised of independent and identically-distributed samples) can be used to generate meshes that are weakly admissible or admissible, and to prescribe a minimal number of required samples so that one can achieve inequalities of the form \eqref{eq:linf-inequality} with high probability, and with constants $C_n$ whose large-$n$ behavior is known. We also provide a straightforward generalization of WAM's to non-polynomial spaces, and all our results apply in this case. 
For polynomial spaces and under regularity conditions on the set $D$, there are deterministic strategies for constructing AM's and WAM's \cite{kroo_optimal_2011,bos_geometric_2011,piazzon_optimal_2016}, which typically employ geometric constructions of meshes. An alternative strategy that we investigate in this paper is randomization, building meshes from random samples.  
When it is feasible for meshes to be randomly generated, one supposes that a large enough number of samples can be used to form WAM's. We provide quantitative analysis for such a procedure, deriving $n$-asymptotic quantities $a$ and $b$ for very general approximation spaces on general sets $D$.

\subsection{Contributions of this paper}

Our contributions in this article are twofold. We first generalize the notion of weakly admissible meshes to general hierarchical subspaces $\{V_n\}_{n=0}^\infty$, i.e., $V_n\subset V_{n+1}$ with $N_n:=\text{dim}(V_n)$. This generality allows us to consider generating meshes for approximation problems involving subspaces spanned by very general functions, not just polynomials. In particular, we seek to establish results analogous to \eqref{eq:linf-inequality}, centering around the inequality,
\begin{align}\label{eq:weak-wam}
  \sup_{x \in D} \left| v(x) \right| &\leq k_b N_n^b \max_{x \in \calA_n} \left| v(x) \right|, \hskip 10pt \forall v \in V_n, & |\mathcal{A}_n| &\leq k_a N_n^a.
\end{align}
The general algorithmic procedure we consider in this article is as follows: With $\{\rho_n\}_{n=0}^\infty$ some sequence of probability measures, let $\mathcal{A}_n$ be generated randomly as $|\mathcal{A}_n|$ i.i.d. samples from $\rho_n$ for each $n$. Our main results show that, for specific choices of $\rho_n$ and $|\mathcal{A}_n|$, this procedure generates weakly admissible meshes. A summary of these results is as follows: 

\begin{theorem}[Main results]
Let $\mu$ be a probability measure on $\R^n$ with closed support $D$ such that $\{V_n\}_{n=0}^\infty \subset L^2_{\mu}(D)$, and let $\epsilon > 0$. With $\omega$ an event with respect to the probability space corresponding to the product space of randomly drawn samples, there are finite constants $k_a = k_a(\epsilon)$, $k_b = k_b(\epsilon, \omega)$, and $N = N(\omega)$, such that the following statements are true with probability 1.
\begin{itemize}
  \item Let $\rho_n \equiv \mu$ for all $n$. With $a = q^\ast + \epsilon$ and $b = \frac{q^\ast}{2} + \epsilon$, then $\{\mathcal{A}_n\}_{n=0}^\infty$ forms an \textit{asymptotic} WAM, i.e., \eqref{eq:weak-wam} is true for all $n \geq N$; see Theorem \ref{thm:mu-wam}.
  \item Let $\rho_n = \mu_{V_n}$, where the latter is defined later in \eqref{eq:muV-def}. With $a = 1 + \epsilon$ and $b = \frac{q^\ast+1}{2} + \epsilon$, then $\{\mathcal{A}_n\}_{n=0}^\infty$ forms an asymptotic WAM; see Theorem \ref{thm:muV-WAM}.
  \item Assume $D$ is convex and compact, and that the elements of $V_n$ for each $n$ are smooth. Let $\rho_n = \nu_n$, where the latter is defined later in \eqref{am:nu}. With $a=p^\ast+\epsilon$ and $b = 0$, such $\{\mathcal{A}_n\}_{n=0}^\infty$ form an asymptotic AM; see Theorem \ref{thm:nun-AM}.
\end{itemize}
\end{theorem}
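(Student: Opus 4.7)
The plan is to reduce the admissible-mesh statement to a covering argument built on top of a uniform Markov-type inequality for $V_n$. Because each element of $V_n$ is smooth and $D$ is convex and compact, I would first establish a bound of the form $\|\nabla v\|_\infty \le M_n \|v\|_\infty$ for all $v \in V_n$, where the growth of $M_n$ in $n$ encodes the ``smoothness budget'' of $V_n$ and ultimately determines the exponent $p^\ast$. This is the only place the smoothness and convexity hypotheses enter. Given any $\tfrac{c}{M_n}$-net $\calA_n \subset D$ in Euclidean distance, the mean value inequality on the convex body $D$ yields, for each $x \in D$ and nearest $x^\ast \in \calA_n$,
\[
|v(x)| \;\le\; |v(x^\ast)| + c\|v\|_\infty \;\le\; \max_{y \in \calA_n}|v(y)| + c\|v\|_\infty,
\]
so that $\|v\|_\infty \le (1-c)^{-1} \max_{\calA_n}|v|$ for any fixed $c < 1$, independent of $n$. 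This is precisely the $b=0$ inequality with constant $k_0 = (1-c)^{-1}$.

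It then remains to show that $|\calA_n| \le k_a N_n^{p^\ast+\epsilon}$ i.i.d.\ samples from $\nu_n$ form a $\tfrac{c}{M_n}$-net almost surely for all large $n$. I would proceed by a union bound over a fixed minimal cover: let $\kappa_n$ be the minimum number of Markov half-balls $B(x,c/(2M_n))\cap D$ needed to cover $D$. The measure $\nu_n$ from \eqref{am:nu} is presumably tuned so that every such ball carries $\nu_n$-mass at least $\eta_n \gtrsim 1/\kappa_n$; equivalently, $p^\ast$ should be the smallest exponent for which $\kappa_n \lesssim N_n^{p^\ast}$ under this weighted covering. Then
\[
\Pr\bigl(\exists\ \text{ball of the cover missed by }\calA_n\bigr) \;\le\; \kappa_n (1-\eta_n)^{|\calA_n|},
\]
and choosing $|\calA_n| \asymp N_n^{p^\ast+\epsilon}$ makes this probability summable in $n$ thanks to the extra $N_n^\epsilon$ factor. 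The Borel--Cantelli lemma then provides the random threshold $N(\omega)$ after which the net property, and hence the admissible-mesh inequality, holds surely.

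The main obstacle I anticipate is the joint calibration of $\nu_n$ and the Markov-ball geometry. The density of $\nu_n$ must place more weight where $V_n$ can oscillate most rapidly---in the polynomial setting on a convex body, near $\partial D$, analogous to Chebyshev-type equilibrium weights---so that the minimum ball mass $\eta_n$ stays comparable to $1/\kappa_n$; otherwise the exponent $a$ degrades strictly below $p^\ast$. A secondary difficulty, more analytic than probabilistic, is establishing the uniform Markov inequality with a sharp growth rate $M_n$ for a general hierarchical subspace of smooth functions, since this is what fixes the numerical value of $p^\ast$. For polynomials on convex bodies the bound $M_n \lesssim n^2$ is classical, but at the present level of generality this is the step where the precise meaning of $p^\ast$ must be read off from the definition \eqref{am:nu} of $\nu_n$.
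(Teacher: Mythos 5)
Your proposal addresses only the third bullet (the admissible mesh via $\nu_n$); the first two bullets use an entirely different mechanism that your sketch does not touch. Theorems~\ref{thm:mu-wam} and~\ref{thm:muV-WAM} are \emph{not} proved by coverings or Markov inequalities: they pass through an $L^2$ near-isometry of a random Vandermonde-type matrix (Theorems~\ref{thm:ls} and~\ref{thm:optimal-ls}, a matrix Chernoff bound), which gives $\|v\|_{2,\mu} \asymp \|v\|_{2,\mathcal A_n}$, followed by the Nikolskii-type chain $\|v\|_\infty \le \sqrt{K_\mu(V_n)}\,\|v\|_{2,\mu} \lesssim \sqrt{K_\mu(V_n)}\,\|v\|_{2,\mathcal A_n} \le \sqrt{K_\mu(V_n)}\,\|v\|_{\infty,\mathcal A_n}$. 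This is where the exponent $q^\ast$, the Bernstein--Markov factor $K_\mu(V_n)$, and the induced measure $\mu_{V_n}$ enter; a covering argument cannot produce the exponents $(q^\ast+\epsilon,\,q^\ast/2+\epsilon)$ or $(1+\epsilon,\,(q^\ast+1)/2+\epsilon)$.

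For the AM bullet your overall architecture --- Lipschitz/Markov control of $v$, a net, a union bound that a random sample hits every ball, Borel--Cantelli --- does match the paper. The Markov-type bound you want, $\|\nabla v\|_\infty \le M_n\|v\|_\infty$, is available here with $M_n = R_\mu(V_n)$ via the orthonormal-basis Cauchy--Schwarz step $\|\nabla v(x)\|\le R_n(x)\|v\|_{2,\mu}\le R_n(x)\|v\|_\infty$, so that part is fine. However, there is a genuine mismatch between your net and your measure. You propose a \emph{uniform-radius} $c/M_n$-net, so your $\kappa_n$ is the ordinary covering number of $D$ at scale $\sim R_\mu(V_n)^{-1}$. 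That is the object that appears in Theorem~\ref{th:am}, where one samples from the \emph{uniform} measure, and the exponent it yields is $\limsup_n \log\kappa_n/\log N_n$, which in general is strictly larger than the $p^\ast$ defined in~\eqref{p^*}. The constant $p^\ast$ is defined via the $R_n^{-1}$-\emph{weighted} covering of Definition~\ref{am:def1}, in which the ball at $y$ has radius $\propto R_n(y)^{-1}$: small balls where $R_n$ is large (e.g.\ near $\partial D$ for polynomials), large balls where $R_n$ is small. The measure $\nu_n \propto R_n(x)^d\,dx$ is calibrated precisely so that each of these \emph{variable-radius} balls carries comparable $\nu_n$-mass $\gtrsim 1/(G_{\mathcal N_n}|\mathcal N_n|)$; that is the content of~\eqref{num}--\eqref{dem}. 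If you instead pair $\nu_n$ with a uniform net, the argument breaks: $\nu_n$ deliberately under-weights the regions where $R_n$ is small, so a fixed-radius ball sitting there may have $\nu_n$-mass far below $1/\kappa_n$, and your union bound no longer closes with $M_n\asymp N_n^{p^\ast+\epsilon}$ samples. Your own sentence acknowledging that $\nu_n$ ``should be tuned'' gestures at this, but the tuning cannot be done without also letting the ball radii vary --- which is exactly what the weighted covering machinery of Section~\ref{wcs} provides and what your sketch omits.

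Two smaller points. First, the paper's Lipschitz estimate~\eqref{am:1}/\eqref{am:loc} is phrased with $\|v\|_{2,\mu}=1$ and then uses $\|v\|_\infty\ge\|v\|_{2,\mu}=1$ to convert the additive error $2/k$ into a multiplicative one; your version normalizes by $\|v\|_\infty$ directly, which is equivalent but relies on the $L^\infty$-Markov bound above, which itself is proved through the $L^2$ machinery --- so the $L^2$ structure is not really avoidable. Second, after the covering argument you still need Theorem~\ref{thm:WAM-con} and the $\mu$ZC hypothesis if you want a genuine (non-asymptotic) AM; as stated the Borel--Cantelli step only gives an asymptotic AM, consistent with the statement but worth flagging.
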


The constants $q^*$ and $p^*$ are defined in \eqref{eq:qast-def} and \eqref{p^*}, respectively.  
Dependence of $k_a$ and $k_b$ on $\epsilon$ is expected, reflecting a tradeoff in asymptotic sharpness of estimates versus proportionality constants in those asymptotics. Due to the level of generality of our statements, dependence of $k_b$ and $N$ on the event $\omega$ is unavoidable since one can construct $(\mu,V_0)$ such that for any fixed finite set $\mathcal{A}_0$ generated from random samples, there is positive probability that $\mathcal{A}_0$ is not a norming set for $V_0$ on $D$; see Example \ref{ex:haar-wavelets}.

However, such adversarial situations can be mitigated by assuming a type of regularity condition on the spaces $V_n$: If the sequence $\{V_n\}_{n=0}^\infty$ defines spaces such that the contour sets of every non-constant function in $V_n$ have $\rho_n$-measure 0, or $\mu$-measure 0 if $\rho_n\ll\mu$, then $N(\omega) = 0$ so that all the above statements are true for all $n\in\N$ with the same $(a,b)$; see Corollaries \ref{cor:mu-wam}, \ref{cor:muV-WAM}, \ref{cor:nun-AM}. Hence, under this zero-measure contour condition, the sets $\mathcal{A}_n$ are genuine WAM's and AM's with probability 1. In particular, when $\rho_n$ has a Lebesgue density and $V_n$ are polynomial spaces, this zero-measure contour condition is satisfied; see Proposition \ref{eq:muzc-analytic}.

The main focus of this article is to establish asymptotic \textit{rates} of WAM behavior constructed from random samples, exemplified by the constants $p^\ast$ and $q^\ast$ which are deterministic and independent of $\epsilon$:
 
$q^*$ is the asymptotic log-ratio of the Bernstein-Markov factor of the space $V_n$ and $N_n$; see \eqref{eq:qast-def}, and $p^*$ roughly measures the growth rate of a weighted covering number of $D$ relative to $N_n$; see \eqref{p^*}.  Both $q^*$ and $p^*$ depend only on the prescribed measure $\mu$ and the hierarchical spaces $V_n$.  The sampling measure $\mu_{V_n}$ introduced above was utilized in \cite{cohen_optimal_2016} to construct least squares approximations via random sampling.  Our methodology can be used to generate WAM's for hierarchical polynomial spaces more exotic than the total degree spaces $P_n$ as originally stated in \eqref{eq:linf-inequality}, but due to the generality of our results the first two bullet points above are weaker than some existing results for polynomials and deterministically-constructed meshes, e.g., in \cite{calvi_uniform_2008}. We will elaborate on these statements in Section \ref{2} and \ref{sec:weighted-spaces}. 

The rest of this article is organized as follows. In Section \ref{2}, we introduce the notation and definitions for weakly admissible meshes. A few examples illustrating the definitions are provided in Section \ref{sec:weighted-spaces} to facilitate understanding. 
In Section \ref{3}, we borrow the idea of the near-isometry property of random matrices to obtain a sampling measure that generates weakly admissible meshes with exponents $a=q^*+\epsilon$ and $b=\frac{q^\ast}{2} + \epsilon$, and then appropriately reweight the measure to make $a$ near-optimal at mild cost on $b$: $a=1+\epsilon$ and $b=\frac{q^\ast+1}{2} + \epsilon$. 
In Section \ref{wcs}, we introduce the concept of weighted covering and investigate some properties associated with it. 
In Section \ref{4}, we introduce a novel sampling strategy to generate admissible meshes, and our analysis lies in a probabilistic argument using so-called weighted covering numbers.

\begin{table}[h!]
  \begin{center}
  \resizebox{\textwidth}{!}{
    \renewcommand{\tabcolsep}{0.4cm}
    \renewcommand{\arraystretch}{1.3}
    {\scriptsize
    \begin{tabular}{@{}cp{0.8\textwidth}@{}}
      \toprule
      $\mu$,$D$ & probability measure on $\R^d$ and $D = \mathrm{supp}\; \mu$ \\
      $V$ & $N$-dimensional subspace of functions in $L^2_\mu(D)$ \\
      $\mu_V$ & the $V$-induced measure on $D$ \\
      $K_\mu(V)$, $\lambda(x)$ & the Bernstein-Markov factor for $V$ in $L^2_\mu(D)$, and the normalized $V$-Christoffel function \\
      $R_n$& square root of the sum of Christoffel functions induced by gradients \\
      $\nu_n$& the $R_n$-induced measure (weighted by $R_n^d$) \\
      $W$ & $N$-dimensional space of functions in $V$ weighted by the Christoffel function \\
      $v_i$, $w_i$ & any orthonormal basis for $V$ in $L^2_\mu$, and for $W$ in $L^2_{\mu_V}$, respectively \\
      $\mathcal{A}$, $M$ & A finite set in $D$ of size $M$ \\
      $\bs{A}_{\mathcal{A}}$ & $M \times N$ Vandermonde-like matrix associated with $v_n$ on $\mathcal{A}$\\
      $\|\cdot\|_{2,\mu}$, $\|\cdot\|_{\infty}$ & $L^2_\mu(D)$ norm, and $L^\infty(D)$ norm, respectively \\
      $\|\cdot\|_{2,\mathcal{A}}$, $\|\cdot\|_{\infty,\mathcal{A}}$ & Discrete $\ell^2$ and $\ell^\infty$ norms, respectively, on $\mathcal{A}$\\
       $\vol(A)$, $A\subset\R^d$ & Lebesgue measure of $A$ in $\R^d$\\
    \bottomrule
    \end{tabular}
  }
    \renewcommand{\arraystretch}{1}
    \renewcommand{\tabcolsep}{12pt}
  }
  \end{center}
  \caption{Notation used throughout this article.}\label{tab:notation}
\end{table}

\section{Notation}\label{2}
\subsection{Function spaces}
Let $d \in \N$ be fixed. Consider a probability measure $\mu$ on $\R^d$ whose support is denoted $D \subseteq \R^d$. We do not require any particular conditions on $D$ at present, but the results in Section \ref{4} regarding admissible meshes require compactness of $D$.
The Hilbert space $L^2_{\mu}(D;\C)$ is endowed with the inner product and norm
\begin{align*}
  \left\langle f, g \right\rangle_\mu &\coloneqq \int_{\R^d} f(x) \bar{g}(x) \dx{\mu}(x),& \left\|f \right\|^2_{2,\mu} &\coloneqq \left\langle f, f \right\rangle_\mu.
\end{align*}
The supremum norm on $D$ for functions $f$ is 
\begin{align*}
  \left\| f \right\|_{\infty} &= \sup_{x \in D} \left| f(x) \right|.
\end{align*}
Note $D$ may be unbounded. If $\mathcal{A} \subset D$ is a size-$M$ set of points, we define discrete $L^2$ and $L^\infty$ norms as
\begin{align*}
  \left\| f \right\|^2_{2,\calA} &\coloneqq \frac{1}{M} \sum_{x \in \calA} |f(x)|^2, & 
  \left\| f \right\|_{\infty,\calA} &\coloneqq \max_{x \in \calA} \left|f(x)\right|.
\end{align*}

Let $V$ be an $N$-dimensional subspace of functions in $L^2_\mu(D)$. 
We can choose an associated orthonormal basis $v_1, \ldots, v_N \in V$. Let $\delta_x$ denote the $x$-centered Dirac delta distribution. For any $x \in D$, the $V$-valued function
\begin{align*}
  K\left(x, \cdot\right) = \sum_{i=1}^N \bar{v}_i(x) v_i(\cdot)
\end{align*}
is the $V$-Riesz representor of $\delta_x$ in $L^2_\mu$. For any $v \in V$, we have 
\begin{align*}
  \left| v(x) \right| = \left|\left\langle v, K\left(x, \cdot\right) \right\rangle_\mu \right| \leq \left\| v \right\|_{2,\mu} \left\| K\left(x, \cdot\right) \right\|_{2,\mu} = \left\| v \right\|_{2,\mu} \sqrt{\sum_{i=1}^N |v_i(x)|^2 }.
\end{align*}
And so,
\begin{align}\label{eq:l2-linfty}
  \left\| v \right\|_{2,\mu}^2 \leq \left\| v\right\|^2_{\infty} &\leq K_{\mu}(V) \left\| v \right\|^2_{2,\mu}, &
  K_{\mu}(V) &\coloneqq \left\| K(x,x) \right\|_\infty,
\end{align}
where the lower inequality above holds since $\mu$ is a probability measure. Throughout this article we assume that $K_\mu(V)$ is finite, which excludes, e.g., algebraic polynomials on unbounded domains. The optimal (smallest) value of the equivalence factor $K_\mu(V)$ is $N$:
\begin{align*}
  K_\mu(V) = \left\| \sum_{i=1}^N |v_i(x)|^2 \right\|_\infty \geq \sum_{i=1}^N \left\| v_i(x)\right\|^2_{2,\mu} = N.
\end{align*}
If $V$ is chosen as a space of polynomials up to a certain degree, selecting $\mu$ as a so-called optimal measure achieves this optimal factor \cite{bloom_convergence_2010}. Assuming $V$ contains differentiable functions, we also define
\begin{align}
&R(x) = \left(\sum_{i=1}^N\|\nabla v_i(x)\|^2\right)^{1/2}&R_\mu(V) = \|R(x)\|_\infty,\label{R}
\end{align}
where $\|\cdot\|$ is the Euclidean norm on vectors. 
This function will play a role in our analysis involving covering numbers for admissible meshes.
In that discussion, we assume that $R(x)$ is strictly positive on $D$.

Finally, note in the above definitions, both $K(x,x)$ and $R(x)$ are independent of the choice for orthonormal basis $v_1, \cdots, v_N$ and in particular depend only on $(\mu,V)$. 

\subsection{WAM's for general hierarchical subspaces}
We generalize the notion of admissible meshes to general hierarchical subspaces. 
Let $\left\{V_n \right\}_{n=0}^\infty$ denote any sequence of finite-dimensional hierarchical subspaces, i.e., $V_n \subset V_{n+1}$ and $\dim V_n < \dim V_{n+1}$ for all $n$. We set 
\begin{align*}
  N_n \coloneqq \dim V_n,
\end{align*}
which is a sequence of strictly increasing positive integers.

\begin{definition}[Asymptotic weakly admissible meshes for hierarchical spaces]\label{def:Vwam}
 Let $D$ be a closed set in $\R^d$. Consider $\left\{\calA_n\right\}_{n=0}^\infty$ a collection of finite subsets of $D$. Assume there is a collection of constants $\{C_n\}_{n=0}^\infty$, $N\in\N$ and $a, b, k_a, k_b<\infty$, such that for all $n\geq N$, 
  \begin{itemize}
    \item $\left\| v \right\|_{\infty} \leq C_n \|v\|_{\infty, \mathcal A_n}$, $\forall v \in V_n$
    \item $C_n \leq k_b N_n^b$
    \item $\left| \calA_n \right|\leq k_a N_n^a$
  \end{itemize}
  Then $\left\{\calA_n\right\}_{n=0}^\infty$ is called an \textit{asymptotically weakly admissible mesh}; it is called an \textit{asymptotically admissible mesh} if $b=0$. 
  In particular, when $N = 0$, an asymptotically weakly admissible mesh is the same as a (classical) weakly admissible mesh (WAM). If both $b=N=0$, an asymptotically admissible mesh coincides with a (classical) admissible mesh (AM).
\end{definition}

Compared to the original definition of WAM's \cite{calvi_uniform_2008}, Definition \ref{def:Vwam} allows to consider more general hierarchical spaces other than polynomials. 
On the other hand, the asymptotic WAM/AM portions of Definition \ref{def:Vwam} are weaker in the sense that they only require the $\ell_\infty$-norm comparability condition hold for all sufficiently large $n$.  
For random sampling methods, such a compromise is required due to the generality of the statement, as illustrated by the following example:
\begin{example}[Weaker definitions are necessary for random sampling]\label{ex:haar-wavelets}
  Let $\dx{\mu}(x) = \dx{x}$ on the unit interval $D = [0,1]$. Suppose that $V_n$ contains a nonzero function $v$ such that $\mu\{v^{-1}(0)\}>0$. This may happen, for instance, when $V_n$ consists of compactly supported wavelets. In this case, for every $n\in\N$, $M>N_n$ and $\mathcal A = \{X_1, \cdots, X_M\}$ that is drawn as i.i.d. samples according to $\mu$, 
  \begin{align*}
 \Pr\left[ \|v\|_{\infty, \mathcal A} = 0\right] = \left(\mu\{v^{-1}(0)\}\right)^M>0.
  \end{align*}
  Consequently, a strict W/AM condition in our definition (i.e., holding for all $n \geq N$) in general does not imply the $\ell_\infty$-norm comparability condition for all $n\in\N$.  Such examples illustrate the need to consider \textit{asymptotic} W/AM statements.
\end{example}

\subsection{$\mu$ZC sequences}
While Example \ref{ex:haar-wavelets} shows the need for weaker (asymptotic) notions of WAMs, under appropriate assumptions on $V_n$, asymptotic W/AM's are equivalent to classical W/AM's for randomly constructed meshes with probability $1$. The requisite assumption on the spaces $V_n$ precisely disallows the situation in Example \ref{ex:haar-wavelets} where functions can vanish on sets with positive $\mu$-measure. 

\begin{definition}[$\mu$ZC sequences]
  Let $\mu$ be a probability measure on $D \subset \R^d$. A sequence of hierarchical subspaces $\{V_n\}_{n \geq 0}$ with domain $D$ is a $\mu$-measure zero contour ($\mu$ZC) sequence if, for all $n\in\N$,
  \begin{align}\label{eq:muzc}
    \mu\{v^{-1}(0)\} &= 0 & \forall v&\in V_n\backslash\{0\}.
  \end{align}
\end{definition}
Such sequences are not difficult to produce. For example, if $\mu$ has a Lebesgue density then there are many sequences that are $\mu$ZC sequences.

\begin{proposition}\label{eq:muzc-analytic}
  Let $\mu$ be a probability measure on $D \subset \R^d$. If $\mu$ is absolutely continuous with respect to the Lebesgue measure on $\R^d$ and $\{V_n\}_{n=0}^\infty$ is a hierarchical sequence of subspaces containing only real-analytic functions, then $\{V_n\}_{n=0}^\infty$ is a $\mu$ZC sequence.
\end{proposition}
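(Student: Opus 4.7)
The plan is to reduce the proposition to the classical analytic fact that the zero set of a nonzero real-analytic function on a connected open subset of $\R^d$ has Lebesgue measure zero, and then transfer this to $\mu$-measure via absolute continuity. Everything else is just setup.

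First I would fix $v \in V_n \setminus \{0\}$ and view it as a real-analytic function on a connected open $\Omega \supseteq D$; in the canonical applications (e.g.\ polynomial subspaces) one may take $\Omega = \R^d$. Since $v$ is nonzero as an element of $L^2_\mu$, $\mu\{v \neq 0\} > 0$, and because $\mu \ll \lambda_d$ (with $\lambda_d$ denoting Lebesgue measure on $\R^d$), we also have $\lambda_d\{v \neq 0\} > 0$. In particular, $v$ does not vanish identically on $\Omega$. This step uses absolute continuity in the contrapositive direction and is what rules out pathological cases such as $v$ being zero on an entire connected component of $\Omega$ that meets $D$ in a $\mu$-positive set.

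Next I would invoke the analytic lemma: if $\Omega \subset \R^d$ is connected and open and $v: \Omega \to \R$ is real-analytic and not identically zero, then $\lambda_d\{v = 0\} = 0$. A standard proof proceeds by induction on $d$. The base case $d=1$ is immediate, since zeros of a nonzero real-analytic function on a connected open subset of $\R$ are isolated. For $d \geq 2$, if $\{v=0\}$ had positive Lebesgue measure, one picks a Lebesgue density point $x_0$; slicing along lines through $x_0$ and combining Fubini with the one-dimensional case forces every directional derivative of $v$ at $x_0$ to vanish, so the Taylor series of $v$ at $x_0$ is identically zero, which by the identity principle on the connected set $\Omega$ contradicts $v \not\equiv 0$.

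Combining the two steps, $\mu\{v = 0\} \leq \mu(\{v = 0\} \cap \Omega) = 0$, where the final equality uses $\lambda_d(\{v=0\} \cap \Omega) = 0$ together with $\mu \ll \lambda_d$. This verifies \eqref{eq:muzc} for the chosen $v$, and since $v \in V_n \setminus \{0\}$ and $n$ were arbitrary, the sequence $\{V_n\}_{n=0}^\infty$ is a $\mu$ZC sequence. The only nontrivial ingredient is the analytic lemma, which is classical; the rest of the argument is a clean application of absolute continuity.
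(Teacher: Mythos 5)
Your proposal is correct and follows essentially the same route as the paper: transfer positivity of $\mu$-measure to Lebesgue measure via absolute continuity, then invoke the classical fact that a nonzero real-analytic function has a Lebesgue-null zero set (the paper cites \cite{Mityagin_2020} for this, while you sketch the induction-on-$d$ argument). The only cosmetic difference is that the paper argues by contradiction whereas you argue directly.
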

\begin{proof}
  Assume the contrary, that there is some $n \in \N$ and $v \in V_n \backslash \{0\}$ such that
  \begin{align}\label{eq:non-muzc}
    \mu(v^{-1}(0)) > 0 \hskip 5pt \Longrightarrow \hskip 5pt \vol(v^{-1}(0)) > 0.
  \end{align}
  Since $v$ is nonzero and real analytic, the zero set of $v$ has Lebesgue measure $0$, i.e., $ \vol(v^{-1}(0)) = 0$; see \cite{Mityagin_2020} for a short elementary proof. 
This is a contradiction to \eqref{eq:non-muzc}. Therefore, $\{V_n\}_{n=0}^\infty$ is a $\mu$ZC sequence.
\end{proof}
The proposition above implies, in particular, that if $\{V_n\}_{n=0}^\infty$ is a hierarchical sequence of polynomial subspaces with $\mu$ any probability measure having a Lebesgue density, then $\{V_n\}_{n=0}^\infty$ is a $\mu$ZC sequence. Our main result in this section demonstrates the utility of $\mu$ZC sequences: \textit{Asymptotic} W/AM's of $\mu$ZC sequences are (classical) W/AM's.

\begin{theorem}\label{thm:WAM-con}
  Let $\{V_n\}_{n=0}^\infty$ be a sequence of hierarchical subspaces on domain $D$. For $n \geq 0$, let $\mathcal{A}_n$ be a set of $|\mathcal{A}_n|$ iid samples from a probability measure $\rho_n$. Assume there is a probability measure $\mu$ such that $\rho_n \ll \mu$ for all $n$, and that $\{V_n\}_{n=0}^\infty$ is a $\mu$ZC sequence. If $\{\mathcal{A}_n\}_{n=0}^\infty$ is an asymptotic W/AM with exponent parameters $(a, b)$ with probability 1, then it is also a (classical) W/AM with parameters $a, b$ with probability 1.
\end{theorem}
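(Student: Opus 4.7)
My plan is to reduce the statement to a finite-$n$ verification by exploiting the asymptotic W/AM hypothesis. For indices $n \geq N(\omega)$ the required bounds $|\mathcal{A}_n| \leq k_a N_n^a$ and $C_n \leq k_b N_n^b$ are already in hand. For the (random but almost-surely finite) indices $n < N(\omega)$, it suffices to show that $\mathcal{A}_n$ is a norming set for $V_n$, i.e., that $\|\cdot\|_{\infty,\mathcal{A}_n}$ is a norm on the finite-dimensional space $V_n$. Once this is established, $\widetilde{C}_n := \sup\{\|v\|_\infty : v \in V_n,\ \|v\|_{\infty,\mathcal{A}_n} \leq 1\}$ is finite, and the inflated constants $k_b' := \max(k_b,\ \max_{n<N} \widetilde{C}_n / N_n^b)$ and $k_a' := \max(k_a,\ \max_{n<N} |\mathcal{A}_n| / N_n^a)$ are almost surely finite as maxima over finitely many a.s.-finite random variables. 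Both inflations preserve the exponents $(a,b)$, so the AM case ($b=0$) is covered as well.

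The key lemma is: for each fixed $n \in \N$ with $|\mathcal{A}_n| \geq N_n$, the set $\mathcal{A}_n$ is norming for $V_n$ almost surely. Writing $\mathcal{A}_n = \{X_1,\ldots,X_M\}$ and $K_i := \{v \in V_n : v(X_j)=0 \text{ for all } j\leq i\}$, the nested evaluation kernels satisfy $\dim K_i \in \{\dim K_{i-1},\ \dim K_{i-1}-1\}$. Conditional on $X_1,\ldots,X_{i-1}$ and on the event $\{K_{i-1} \neq \{0\}\}$, I choose a nonzero element $v \in K_{i-1}$ measurably (for instance, project a fixed ordered basis of $V_n$ onto $K_{i-1}$ and take the first nonzero image). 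The $\mu$ZC property gives $\mu(v^{-1}(0)) = 0$, and $\rho_n \ll \mu$ then yields $\rho_n(v^{-1}(0)) = 0$. Hence
\[
\Pr[\dim K_i = \dim K_{i-1} \mid X_1,\ldots,X_{i-1}] \leq \Pr[v(X_i)=0 \mid X_1,\ldots,X_{i-1}] = 0
\]
on $\{K_{i-1} \neq \{0\}\}$. Induction in $i$ then gives $\dim K_i = \max(N_n - i, 0)$ almost surely, so $K_M = \{0\}$ whenever $M \geq N_n$, which is the norming property.

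To finish, I intersect the countable family of probability-one events ``$\mathcal{A}_n$ is norming for $V_n$'' ($n \in \N$) with the probability-one event that $\{\mathcal{A}_n\}$ is an asymptotic W/AM. On this intersection every $\widetilde{C}_n$ for $n<N$ is finite, and the inflated constants $(k_a', k_b')$ together with the original exponents $(a,b)$ deliver the classical W/AM (respectively AM, when $b=0$) property.

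The main obstacle is the measurable selection of a nonzero element of the random subspace $K_{i-1}$; this is routine via ordered Gram--Schmidt on a fixed basis of $V_n$ but should be recorded carefully. A secondary subtlety is that the argument implicitly requires $|\mathcal{A}_n| \geq N_n$: for $n \geq N$ this is forced by $C_n < \infty$, and for the small indices $n < N(\omega)$ it is guaranteed by the random sampling schemes of Section \ref{3} and Section \ref{4}, where $|\mathcal{A}_n|$ is at least $N_n$ by construction.
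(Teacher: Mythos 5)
Your proof is correct and follows essentially the same route as the paper's: reduce to showing that, for each fixed $n$ with $|\mathcal{A}_n|\geq N_n$, the random set $\mathcal{A}_n$ is almost surely a norming set for $V_n$, establish this by an inductive $\mu$ZC-plus-Fubini argument, then intersect these countably many probability-one events with the asymptotic W/AM event and inflate the proportionality constants over the finitely many indices $n<N(\omega)$. The paper packages your key lemma as the statement that the $N_n\times N_n$ alternant matrix built from the first $N_n$ samples has nonzero determinant a.s.\ (its Lemma~\ref{lemma:muzc-det}), with a Schur-complement step that produces an explicit nonzero kernel element at each inductive stage — a concrete instance of your measurable selection — so the two inductions are the same argument in different notation, and you correctly flag the same implicit $|\mathcal{A}_n|\geq N_n$ hypothesis that the paper's proof also assumes in passing.
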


Before proving this result, we need an intermediate fact that generalizes a result from \cite{Bass_2005}.
\begin{lemma}\label{lemma:muzc-det}
  Assume $\{V_n\}_{n=0}^\infty$ is a $\mu$ZC sequence of hierarchical subspaces, and let $\mathcal{A}_n$ be generated as i.i.d. samples of $\rho_n$, with $\rho_n \ll \mu$ for all $n \geq 0$, i.e., 
  \begin{align*}
    \mathcal{A}_n &= \{X_{n,1}, \ldots, X_{n,|\mathcal{A}_n|} \}, & X_{n,j} \stackrel{\textrm{i.i.d.}}{\sim} \rho_n.
  \end{align*}
  Define the square alternant matrices
  \begin{align*}
    \bs{V}_n &\in \C^{N_n \times N_n}, & (\bs{V}_n)_{i, j} &= v_j(X_{n,i}).
  \end{align*}
  Then $\mathrm{Pr}[\det \bs{V}_n = 0] = 0$ for all $n \geq 0$.
\end{lemma}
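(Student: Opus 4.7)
The plan is to induct on $k=1,\dots,N_n$, showing that the top-left $k\times k$ submatrix $\bs{V}_n^{(k)}$ of $\bs{V}_n$, whose entries are $v_j(X_{n,i})$ for $i,j\leq k$, has nonzero determinant with probability one. For the base case $k=1$, the matrix is the scalar $v_1(X_{n,1})$. Since $v_1$ belongs to the orthonormal basis of $V_n$, it is a nonzero element of $V_n$, so the $\mu$ZC property gives $\mu(v_1^{-1}(0))=0$, and then $\rho_n \ll \mu$ implies $\rho_n(v_1^{-1}(0))=0$. Hence $\Pr[v_1(X_{n,1})=0]=0$.

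For the inductive step, suppose $\det\bs{V}_n^{(k)}\neq 0$ almost surely. Expanding $\det\bs{V}_n^{(k+1)}$ by cofactors along the last row yields
\begin{align*}
\det \bs{V}_n^{(k+1)} \;=\; \sum_{j=1}^{k+1}(-1)^{k+1+j}\,M_{k+1,j}\,v_j(X_{n,k+1}) \;=:\; F(X_{n,k+1}),
\end{align*}
where $M_{k+1,j}$ is the $(k+1,j)$-minor, a function only of $X_{n,1},\dots,X_{n,k}$. Crucially, the coefficient of $v_{k+1}$ is $M_{k+1,k+1}=\det\bs{V}_n^{(k)}$, which is nonzero almost surely by the inductive hypothesis. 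On this probability-one event, $F$ is a nonzero element of $V_n$ since the basis $v_1,\dots,v_{k+1}$ is linearly independent and the coefficient of $v_{k+1}$ in $F$ is nonzero.

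Apply the $\mu$ZC property to this (random, but almost surely nonzero) element $F \in V_n\setminus\{0\}$: conditionally on $X_{n,1},\dots,X_{n,k}$, we have $\mu(F^{-1}(0))=0$, hence $\rho_n(F^{-1}(0))=0$ by $\rho_n\ll \mu$. Using the independence of $X_{n,k+1}$ from the earlier samples and integrating out, the tower property gives
\begin{align*}
\Pr\bigl[\det\bs{V}_n^{(k+1)}=0\bigr]
\;=\; \mathbb{E}\bigl[\rho_n(F^{-1}(0))\,\mathbf{1}_{\det \bs{V}_n^{(k)}\neq 0}\bigr] \;=\; 0.
\end{align*}
Iterating up to $k=N_n$ proves the lemma. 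The only delicate point is justifying that $F$ lies in $V_n\setminus\{0\}$ conditionally on $X_{n,1},\dots,X_{n,k}$ so that the $\mu$ZC hypothesis (which is a statement about deterministic elements of $V_n$) can be invoked sample-pathwise; this is handled by restricting to the probability-one event provided by the inductive hypothesis.
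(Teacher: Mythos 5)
Your proof is correct and follows essentially the same route as the paper: induction on the leading principal minors, with the base case handled by $\mu$ZC applied to $v_1$, and the inductive step identifying the determinant, viewed as a function of the newest sample, as an almost-surely-nonzero element of $V_n$ (since the coefficient of $v_{k+1}$ equals the previous minor), then invoking $\mu$ZC, $\rho_n\ll\mu$, and Fubini/tower. The only cosmetic difference is that you expand the determinant by cofactors along the last row, while the paper factors out $D_{n,\ell}$ via a Schur complement; both yield the same linear form in $v_1,\dots,v_{k+1}$ and the same conclusion.
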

\begin{proof}
Fix $n\geq 0$. 
Let $v_1, \cdots, v_{N_n}$ be a basis in $V_n$.
The matrix $\bs{V}_n$ is given by 
\begin{align*}
\bm V_n = \begin{pmatrix}
  v_1(X_1) & v_{2}(X_1)&\cdots &v_{N_n}(X_1)\\
  v_1(X_2)& v_{2}(X_2)&\cdots & v_{N_n}(X_2)\\
\vdots&\vdots&\ddots & \vdots\\
  v_1(X_{N_n})& v_{2}(X_{N_n})&\cdots & v_{N_n}(X_{N_n})
\end{pmatrix}
.
\end{align*}
  Let $D_{n,\ell}$ denote the determinant of the upper-left $\ell \times \ell$ block of $\bs{V}_n$ (i.e., the size-$\ell$ upper-left principal minor of $\bs{V}_n$). We seek to show that $\mathrm{Pr}[D_{n,N_n} = 0] = 0$. 
Since $\{V_n\}_{n=0}^\infty$ is a $\mu$ZC, we have $\mu\left(v_1^{-1}(0)\right) = 0$, which, in tandem with $\rho_n \ll \mu$, shows that $D_{n,1} = v_1(X_{n,1}) = 0$ occurs with probability 0, i.e., $\mathrm{Pr}[D_{n,1} = 0] = 0$.

Proceeding by induction, fix $1 \leq \ell < N_n$, and assume $\mathrm{Pr}[D_{n,\ell} = 0] = 0$.
Write $\bm V_{n,\ell+1}$ as the following block form:
\begin{align*}
\bm V_{n,\ell+1} = \begin{pmatrix}
\bm V_{n,\ell} &\bm w_{\ell+1}\\
  \bm u^T_{\ell+1}(X_{\ell+1})& v_{\ell+1}(X_{\ell+1})
\end{pmatrix},
\end{align*}
where 
\begin{align*}
  &\bm u_{\ell+1}(X_{\ell+1}) = (v_{1}(X_{\ell+1}), \cdots, v_{\ell}(X_{\ell+1}))^T&\bm w_{\ell+1} = (v_{\ell+1}(X_1), \cdots, v_{\ell+1}(X_\ell))^T.
\end{align*}
Note that $\bm u_{\ell+1}$ depends only on $X_{\ell+1}$, and $\bm w_{\ell+1}$ depends only on $X_1, \cdots, X_\ell$. In particular, the inductive hypothesis implies that $\bs{V}_{n,\ell}$ is invertible with probability 1. On this probability-1 event, an exercise with Schur complements for computing determinants of block matrices yields
\begin{align*}
  D_{n,\ell+1} = D_{n,\ell} \left( v_{\ell+1}(X_{n,\ell+1}) - \bs{u}_{\ell+1}^T \bs{V}_{n,\ell}^{-1} \bs{w}_{\ell+1} \right).
\end{align*}
Thus, $D_{n,\ell+1}$ vanishes on the event $\{D_{n,\ell} \neq 0\}$ if and only if,
\begin{align*}
  v_{\ell+1}(X_{\ell+1})- \bs{u}_{\ell+1}^T(X_{\ell+1}) \bs{V}_{n,\ell}^{-1} \bs{w}_{\ell+1} = 0 \Longleftrightarrow F(X_{\ell+1}; X_1, \cdots, X_\ell) = 0,
\end{align*}
where 
\begin{align*}
F(x; X_1, \cdots, X_\ell) = v_{\ell+1}(x) - (v_1(x), \cdots, v_\ell(x))\bm V_{n,\ell}^{-1}\bm w_{\ell+1}\in V_n.
\end{align*}
It is clear that $F\not\equiv 0$ for any $(X_1, \ldots, X_{\ell})$ satisfying $D_{n,\ell} \neq 0$ since the coefficient of $v_{\ell+1}$ is $1$ and $v_{\ell+1}$ is independent of $v_1, \cdots, v_\ell$.  
Thus, with the Fubini-Tonelli theorem we conclude
\begin{align*}
\mathrm{Pr}\left[ \{D_{n,\ell+1} = 0\}\right] &\leq \mathrm{Pr}\left[\{D_{n,\ell} = 0\}\right] + \mathrm{Pr}\left[ \{D_{n,\ell+1} = 0\} \;\cap\; \{D_{n,\ell} \neq 0\} \right]\\
 &\stackrel{\text{induction}}{=} 0+\int_{D_{n,\ell} \neq 0} \rho_{n+1}\left(F^{-1}(0; x_1, \ldots, x_\ell)\right) \dx{\rho_n(x_1)} \cdots \dx{\rho_n(x_\ell)} \\
  &\stackrel{\eqref{eq:muzc}, \rho_{n+1}\ll\mu}{=} \int_{D_{n,\ell} \neq 0} 0\, \dx{\rho_n(x_1)} \cdots \dx{\rho_n(x_\ell)}=0.
\end{align*}
Thus, $\mathrm{Pr}\left[ D_{n,\ell+1} = 0 \right]  = 0$ as desired.
\end{proof}

We are now in a position to prove Thereom \ref{thm:WAM-con}.

\begin{proof}[Proof of Thoerem \ref{thm:WAM-con}]
Fix $n\in\N$, and let $\calA_n=\{X_1, \cdots, X_M\}$ where $M = |\calA_n|\geq N_n$.  
For any $v\in V_n$ with $\|v\|_{2,\mu}=1$, write $v = \sum_{i\in [N_n]}\alpha_iv_i(x)$ with $\|\alpha\|_2=1$, where $\alpha = (\alpha_1,\cdots, \alpha_{N_n})$. 
By \eqref{eq:l2-linfty}, $\|v\|_{\infty,\mu}\leq K_\mu(V_n)<\infty$. 
On the other hand, 
\begin{align*}
\|v\|_{\infty,\calA_n}\geq\|v\|_{\infty,\{X_1, \cdots, X_{N_n}\}} = \|\bm V_n\alpha\|_\infty\geq\frac{1}{\sqrt{N_n}}\|\bm V_n\alpha\|_2\geq\frac{1}{\sqrt{N_n}}\sigma_{\min}(\bm V_n),
\end{align*}
where $\sigma_{\min}(\bm V_n)$ is the smallest singular value of $\bm V_n$. 
Hence, 
\begin{align}
&\|v\|_{\infty,\mu}\leq \frac{K_\mu(V_n)\sqrt{N_n}}{\sigma_{\min}(\bm V_n)}\|v\|_{\infty,\calA_n}& 0\neq v\in V_n.\label{mygood}
\end{align}
  Since $\{V_n\}_{n \geq 0}$ is a $\mu$ZC sequence, then Lemma \ref{lemma:muzc-det} implies $\bm V_n$ is invertible (i.e. $\sigma_{\min}(\bm V_n)>0$) with probability $1$ for each $n$.
This combined with \eqref{mygood} implies that $\sup_{0\neq v\in V_n}\frac{\|v\|_{\infty,\mu}}{\|v\|_{\infty,\calA_n}}<\infty$ with probability $1$ for fixed $n$. 
Now take a union bound over $n$ to conclude
\begin{align}
\Pr\left[\sup_{0\neq v\in V_n}\frac{\|v\|_{\infty,\mu}}{\|v\|_{\infty,\calA_n}}<\infty, \ \forall n\in\N\right] = 1.\label{happy event}
\end{align}
Under our assumption, we also have 
\begin{align}
\Pr\left[\text{$\calA$ is an asymptotic WAM for $\{V_n\}_{n=0}^\infty$ with parameters $a, b$}\right] = 1.\label{happier event}
\end{align}
Therefore, the intersection of the probabilistic events in \eqref{happy event} and \eqref{happier event} occurs with probability $1$. 
Now take a sample $\omega$ from this intersected probability-1 event: There exists $N(\omega)\in\N$ and $\kappa_a(\omega), \kappa_b(\omega)<\infty$, such that for all $n \geq N(\omega)$, 
\begin{align}
\left\| v \right\|_{\infty} &\leq \kappa_b N_n^b \|v\|_{\infty, \mathcal A_n}&\forall v \in V_n\label{lam}\\
\left| \calA_n \right| &\leq \kappa_a N_n^a\nonumber.
\end{align}
Define 
\begin{align*}
  k_a(\omega) &\coloneqq \max\left\{ \kappa_a(\omega), \max_{j=0, \ldots, n-1}\frac{|\mathcal{A}_j|}{N_j^a}\right\} \\
  k_b(\omega) &\coloneqq \max\left\{ \kappa_b(\omega), \max_{j=0, \ldots, n-1} \frac{1}{N_j^b} \sup_{0 \neq v \in V_j} \frac{\|v\|_{\infty,\mu}}{\|v\|_{\infty,\calA_n}} \right\},
\end{align*}
both of which are finite with probability 1 due to \eqref{happy event}. Then we have that $\mathcal{A}$ is a classical WAM with parameters $(a,b)$, and is a classical AM if $b=0$.
\end{proof}

We have established that randomly generated asymptotic W/AM's are in fact (classical) W/AM's for $\mu$ZC sequences if the sampling measures $\rho_n$ are absolutely continuous with respect to $\mu$. The remainder of this paper therefore focuses on proving asymptotic W/AM properties for randomly generated meshes; all the sampling measures we employ satisfy $\rho_n \ll \mu$.

\section{Weighted spaces and induced probability measures}\label{sec:weighted-spaces}

The $L^2_\mu$-$L^\infty$ equivalence established by \eqref{eq:l2-linfty} can be improved to the optimal equivalence if one considers weighted spaces: For a finite-dimensional $V$ with $L^2_\mu$-orthonormal basis $v_i$, define the ($L^2$) Christoffel function
\begin{align*}
  \lambda(x) = \lambda_{V,\mu}(x) = \frac{N}{K(x,x)} = \frac{N}{\sum_{i=1}^N |v_i(x)|^2},
\end{align*}
and consider the associated space of weighted elements from $V$:
\begin{align}\label{eq:W-def}
  W \coloneqq \sqrt{\lambda(x)} V \coloneqq \left\{ \sqrt{\lambda} v \; \big| \; v \in V \right\}.
\end{align}
We also define a weighted measure $\mu_V$ via
\begin{align}\label{eq:muV-def}
  \dx{\mu_V}(x) \coloneqq \frac{1}{\lambda(x)} \dx{\mu}(x) = \frac{1}{N} K(x,x) \dx{\mu}(x),
\end{align}
which is another probability measure on $D$ that is absolutely continuous with respect to $\mu$, i.e., $\mu_V \ll \mu$. The functions $w_i = v_i \sqrt{\lambda(x)}$ are an $L^2_{\mu_V}(D)$-orthonormal basis for $W$, and 
\begin{align*}
  K_{\mu_V}\left( W\right) = \left\| \sum_{i=1}^N |w_i(x)|^2 \right\|_\infty = N,
\end{align*}
so that $\mu_V$ is an optimal measure for $W$, and we have the optimal equivalence relation 
\begin{align}\label{eq:optimal-l2-linf}
  \left\| w \right\|_{2,\mu_V} &\leq \left\| w \right\|_\infty \leq N \left\| w \right\|_{2,\mu_V}, & w &\in W.
\end{align}
The measure $\mu_V$ has utility in recent computational strategies for constructing discrete least-squares approximations \cite{cohen_optimal_2016}. In this article, we will call $\mu_V$ the $V$-induced measure for $\mu$. The term ``induced" stems from historical context: For certain $\mu$ and polynomial spaces $V$, the measure $\mu_V$ is an additive mixture of tensor-product measures; the univariate measures that define the tensor-product measures in this case are similar to induced orthogonal polynomials \cite{gautschi_set_1993}. Sampling from such non-standard measures is computationally efficient and feasible by exploiting properties of orthogonal polynomials \cite{narayan_computation_2017}.

\subsection{Polynomial spaces}
We will sometimes be concerned with the special case when $V$ is a subspace of polynomials. In this specialized case we denote the space $P$ as an $N$-dimensional space of polynomials. It is convenient (but not necessary) to use multi-indices to define these spaces. 

We let $\alpha \in \N^d$ denote a $d$-dimensional multi-index and use $\Sigma \subset \N^d$ to denote a finite set of multi-indices. Associated to any $\Sigma$, we define the subpsace of algebraic polynomials spanned by monomials:
\begin{align}\label{eq:Psigma}
  P_\Sigma \coloneqq \mathrm{span} \left\{ x^\alpha \; |\; \alpha \in \Sigma \right\}.
\end{align}
A particularly special set of multi-indices are those corresponding to the total-degree space of polynomials:
\begin{align}\label{eq:Sigman}
  \Sigma_n &= \left\{ \alpha \in \N^d \; | \; |\alpha| \leq n \right\}, & n \in \N.
\end{align}
We will use the abbreviation $P_n \coloneqq P_{\Sigma_n}$.

Finally, we note that there are many finite-dimensional polynomial spaces that cannot be written in the form \eqref{eq:Psigma}. This is a deficiency in our presentation style in that we emphasize the specific class of subspaces \eqref{eq:Psigma}. However, all our theoretical results extend to general polynomial subspaces.

Setting $V_n=P_n$, Definition \ref{def:Vwam} for an asymptotic WAM with $N=0$ is consistent with the one used in \cite{bos_geometric_2011}. The optimal value of the WAM exponent $b$ is $b = 0$, and such meshes are known to exist for domains exhibiting a polynomial Markov inequality \cite{calvi_uniform_2008}, although the construction relies on grids that achieve certain fill distances and can thus be cumbersome for sufficiently complex domains using a deterministic approach. 
The optimal value of the exponent $a$ in general is $a = 1$ since the inequality
\begin{align}\label{eq:bernstein-markov-total-degree}
  \left\| p \right\|_{\infty} &\leq C_n \|p\|_{\infty,\mathcal A_n}, & p &\in V_n
\end{align}
holds for some finite $C_n$ only if $\calA_n$ is determining for $P_n$ (i.e., for any $p \in V_n$, $p(x) = 0$ for all $x \in \calA$ implies $p \equiv 0$). 

Note that in the modified definition $D$ is allowed to be unbounded, meaning that WAM's for $V_n$ are only sensible if $V_n$ contains functions that are bounded on $D$. 
This subtlety will be reiterated when we discuss random sampling for generating admissible meshes. 
In the remainder of this paper, we will refer to the above definition when speaking of a WAM. 

\subsection{The constant $q^\ast$}
We will make a further assumption on the spaces $V_n$ that involves the $L^2_\mu$ machinery we have introduced, namely that they satisfy
\begin{align}\label{eq:qast-def}
  q^\ast \coloneqq q^\ast\left(\mu, \left\{V_n\right\}_{n=0}^\infty \right) = \limsup_{n\rightarrow \infty} \frac{\log K_\mu(V_n)}{\log N_n} < \infty.
\end{align}
Note that since $K_\mu(V_n) \geq \dim V_n = N_n$, then $q^\ast \geq 1$. Our main results using concentration of measure characterize the WAM exponents $a$ and $b$ via proportionality to $q^\ast$, and thus small $q^\ast$ is desirable. Requiring finite $q^\ast$ can be related to similar notions in the polynomial context. If we choose $V_n = P_n$, then finite $q^\ast$ along with
\begin{align}\label{eq:total-degree-dimension}
  |\calA| \geq N_n = \dim P_n = \left(\begin{array}{c} n+d \\ d \end{array}\right) \sim \frac{n^d}{d!}
\end{align}
and the fact the $N_n^{1/n} \rightarrow 1$ implies
\begin{align*}
  \lim_{n \rightarrow \infty} K_\mu(P_n)^{1/n} = 1,
\end{align*}
showing that the pair $(D, \mu)$ satisfies the so-called Bernstein-Markov property. Pairs that satisfy the Bernstein-Markov property have fundamental connections to various results in approximation theory, and we refer to \cite[Section 5]{bloom_polynomial_2012} for a detailed summary. In fact, if $\mu$ satisfies a ``density condition" then it is known that $q^\ast$ is finite for sequences of fairly general polynomial subspaces \cite[Corollary 4.2.2]{hussung_pluripotential_2020}. Thus, our requirement that $q^\ast < \infty$ is not unnatural, but is slightly stronger than a Bernstein-Markov property when specialized to polynomials.

To illustrate values of $q^\ast$, we summarize three special choices for $\mu$ and $V_n$.
\begin{example}[Complex exponentials]\label{ex:complex-exp}
  Let $\dx{\mu}(x) = \dx{x}$ on the unit cube $D = [0,1]^d$ for arbitrary $d \geq 1$. For an arbitrary subset $F \subset \Z^d$ of size $N$ given by $F = \left\{ f_1, \ldots, f_N \right\}$, define
    \begin{align}\label{eq:vn-complex-exp}
      v_n(x) &= \exp\left[2 \pi i (f_n \cdot x) \right], & V &= \mathrm{span} \left\{ v_n \right\}_{n=1}^N,
    \end{align}
    where $f_n \cdot x$ is the standard componentwise inner product between two elements in $\R^d$. Then $v_n$ is an orthonormal basis for $V$, and $K_\mu(V) = N$, and thus we can choose any hierarchical collection of subspaces $V_n \subset V_{n+1}$ defined by corresponding hierarchical sets $F_n \subset F_{n+1}$. We then have
    \begin{align*}
      q^\ast = \limsup_{n\rightarrow \infty} \frac{\log K_\mu(V_n)}{\log N_n} = \limsup_{n\rightarrow \infty} \frac{\log N_n}{\log N_n} = 1,
    \end{align*}
    thus achieving the optimal $q^\ast$ factor. In this case we also have $\mu_V = \mu$. In the language of \cite{bloom_convergence_2010} for polynomials, the measure $\mu$ is an optimal measure.
\end{example}

\begin{example}[Tensor-product Jacobi polynomials]\label{ex:jacobi}
  Let $\dx{\mu}(x) \propto \prod_{j=1}^d \left(1-x^{(j)}\right)^\alpha \left(1-x^{(j)}\right)^\beta$ on $x \in [-1,1]^d = D$ and $\alpha, \beta \in \N$. Choosing $V_n = P_n$, the space of $d$-variate polynomials of degree $n$ or less, an orthonormal family for $V_n$ is provided by tensorized Jacobi polynomials. The estimates in \cite[Theorem 9]{migliorati_multivariate_2015} show that $K_\mu(V_n) \leq N_n^{2(\gamma+1)}$, where $\gamma = \max\left\{ \alpha, \beta \right\}$. Therefore, 
    \begin{align*}
      q^\ast = \limsup_{n\rightarrow \infty} \frac{\log K_\mu(V_n)}{\log N_n} \leq \limsup_{n\rightarrow \infty} \frac{2(\gamma+1)\log N_n}{\log N_n} = 2(\gamma+1).
    \end{align*}
  Here, while $N_n$ and $K_\mu(V_n)$ both grow exponentially in $d$, the quantity $q^\ast$ does not.
\end{example}

\begin{example}[Tensor-product Chebyshev polynomials]\label{ex:cheb}
  With $\mu$ and $V_n$ as in the previous example, now take $\alpha = \beta = -1/2$, so that an orthonormal basis is provided by tensorized Chebyshev polynomials. Univariate Chebyshev polynomials $T_k(y)$, $y \in [-1,1]$, satisfy $T^2_k(y) \leq 2$ for all $k$, so that $K_\mu(V_n) \leq N_n 2^d$. Thus, 
    \begin{align*}
      q^\ast = \limsup_{n\rightarrow \infty} \frac{\log K_\mu(V_n)}{\log N_n} \leq \limsup_{n\rightarrow \infty} \frac{d \log 2  + \log N_n}{\log N_n} = 1.
    \end{align*}
    Since $q^\ast \geq 1$ always holds, we conclude that $q^\ast = 1$. Note that the bound $K_\mu(V_n) \leq N_n 2^d$ holds when $V_n = P_{\Sigma}$ for any multi-index set $\Sigma$. Thus, the behavior $q^\ast = 1$ holds for very general hierarchical polynomial spaces. This suggests that the Chebyshev measure $\mu$ is $n$-asymptotically optimal.
\end{example}

\section{Randomized weakly admissible meshes}\label{3}
In this section we prove two of our main results, showing that particular random sampling strategies generate WAM's with exponents $(a,b)$ that depend linearly on $q^\ast$.

\subsection{Discrete randomized near-isometries}
The main strategy for our approach comes in two parts: First we generate a finite mesh $\calA$ that emulates the $L^2_\mu$ norm on $V$. We subsequently use that mesh and the $L^2$-$L^\infty$ equivalence relations described earlier in order to transform comparability of $\|\cdot\|_{2,\mu}$ and $\|\cdot\|_{2,\mathcal{A}}$ into comparability between $\|\cdot\|_{\infty}$ and $\|\cdot\|_{\infty,\mathcal{A}}$. The first part of this strategy, the construction of $\calA$ based on $L^2_\mu$ properties, is the subject of this section.

With $\{v_i\}_{i=1}^N$ an orthonormal basis for $V$, we require the following algebraic formulation: the $M \times N$ matrix $\bs{A}_{\calA}$ has entries
\begin{align*}
  \left(\bs{A}_{\calA}\right)_{m,i} &= \frac{1}{\sqrt{M}} v_{i}\left(x_m\right), & \calA &= \left\{x_m\right\}_{m=1}^{M}, \ \ (m,i)\in [M]\times [N].
\end{align*}
The problem of finding a discrete mesh capable of emulating the $L^2_\mu$ norm is conceptually identical to finding a stable discrete least-squares problem defined by the matrix $\bs{A}$. We codify this in the following theorem from \cite{cohen_stability_2013} with a more explicit constant.
\begin{theorem}[\cite{cohen_stability_2013}]\label{thm:ls}
  Let $\calA = \left\{ X_m \right\}_{m=1}^M$, where the $X_m$ are independent and identically distributed draws of a random variable distributed according to the probability measure $\mu$. For any $r > 0$ and $0 < \delta < 1$, suppose that 
  \begin{align}\label{eq:sample-count}
    \frac{M}{\log M} \geq \frac{3(1+r)}{\delta^2} K_\mu(V).
  \end{align}
Then 
  \begin{align}\label{eq:gramian-proximity}
    \mathrm{Pr} \left[ \left\| \bs{A}_\calA^* \bs{A}_{\calA} - \bs{I} \right\| \geq \delta \right] \leq 2 M^{-r},
  \end{align}
  where $\|\cdot\|$ is the induced (spectral) norm on matrices.
\end{theorem}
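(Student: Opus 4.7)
The plan is to recognize $\bs{A}_\calA^* \bs{A}_\calA - \bs{I}$ as a sum of i.i.d.\ centered random matrices and apply a matrix Chernoff/Bernstein tail bound. Write $\bs{v}(x) = (v_1(x),\ldots,v_N(x))^T$ and set
\begin{align*}
\bs{Y}_m = \frac{1}{M}\,\bs{v}(X_m)\,\bs{v}(X_m)^*, \qquad m=1,\ldots,M.
\end{align*}
Then $\bs{A}_\calA^*\bs{A}_\calA = \sum_{m=1}^M \bs{Y}_m$. Because $\{v_i\}_{i=1}^N$ is an $L^2_\mu$-orthonormal basis for $V$, a direct computation gives $\E[\bs{Y}_m] = \tfrac{1}{M}\bs{I}$, so that $\E\bigl[\bs{A}_\calA^*\bs{A}_\calA\bigr] = \bs{I}$. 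Moreover each $\bs{Y}_m$ is PSD, rank one, with operator norm
\begin{align*}
\|\bs{Y}_m\| = \frac{1}{M}\sum_{i=1}^N |v_i(X_m)|^2 = \frac{K(X_m,X_m)}{M} \leq \frac{K_\mu(V)}{M}
\end{align*}
almost surely, by the definition of $K_\mu(V)$ in \eqref{eq:l2-linfty}.

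Next, I would invoke the matrix Chernoff inequality (Tropp's version): for independent PSD $\bs{Y}_m$ with $\|\bs{Y}_m\|\leq R$ and $\lambda_{\max}(\E\sum \bs{Y}_m)=\lambda_{\min}(\E\sum \bs{Y}_m)=1$, one has
\begin{align*}
\Pr\!\left[\,\bigl\|{\textstyle\sum_m}\bs{Y}_m - \bs{I}\bigr\| \geq \delta\,\right] \;\leq\; 2N\,\exp\!\left(-\frac{\delta^2}{3R}\right),
\end{align*}
valid for $\delta\in(0,1)$ (the factor $1/3$ comes from taking the weaker of the upper/lower tail constants $1/2, 1/3$). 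Plugging $R = K_\mu(V)/M$ gives
\begin{align*}
\Pr\!\left[\,\bigl\|\bs{A}_\calA^*\bs{A}_\calA - \bs{I}\bigr\| \geq \delta\,\right] \;\leq\; 2N \exp\!\left(-\frac{\delta^2 M}{3K_\mu(V)}\right).
\end{align*}

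Finally I would choose parameters so that the right-hand side is bounded by $2M^{-r}$. The hypothesis \eqref{eq:sample-count} rearranges to $\tfrac{\delta^2 M}{3K_\mu(V)} \geq (1+r)\log M$, which yields $\exp(-\delta^2 M/(3K_\mu(V))) \leq M^{-(1+r)}$. Combined with the universal bound $N\leq K_\mu(V)\leq M$ (the latter is a consequence of \eqref{eq:sample-count} since $3(1+r)\log M/\delta^2 \geq 1$ in the nontrivial regime), we obtain $2N\,M^{-(1+r)} \leq 2M\cdot M^{-(1+r)} = 2M^{-r}$, which is precisely \eqref{eq:gramian-proximity}.

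The main obstacles are bookkeeping rather than structural. One is ensuring the constant $3$ (rather than $2$, which would come from the lower-tail bound alone) is the right one when one takes the union of upper and lower eigenvalue deviations—this is why the exponent $3(1+r)/\delta^2$ appears in \eqref{eq:sample-count}. A second is verifying $N\leq M$ so the prefactor $2N$ can be absorbed into the polynomial in $M$; this is automatic from the sampling condition in any meaningful regime but should be checked. Everything else, including the almost-sure bound on $\|\bs{Y}_m\|$, follows immediately from the definitions of the orthonormal basis and the Bernstein–Markov factor $K_\mu(V)$.
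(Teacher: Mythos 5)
The paper does not actually prove this theorem; it is imported verbatim (with a "more explicit constant") from the cited reference \cite{cohen_stability_2013}, so there is no in-paper proof to compare against. Your reconstruction is the standard one and is essentially what Cohen, Davenport, and Leviatan do: decompose $\bs{A}_\calA^*\bs{A}_\calA$ as a sum of i.i.d.\ rank-one PSD terms, verify that the expectation is $\bs{I}$ via the $L^2_\mu$-orthonormality of the $v_i$, bound the per-summand operator norm almost surely by $K_\mu(V)/M$, and invoke Tropp's matrix Chernoff inequality with the $e^{-\delta^2/3}$ simplification to get a two-sided tail $\leq 2N\exp(-\delta^2 M/(3K_\mu(V)))$. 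The only bookkeeping step worth flagging is the absorption of the $2N$ prefactor into $2M^{-r}$: as you note this requires $N\leq M$, which does follow from \eqref{eq:sample-count} (for $N\geq 1$ the hypothesis forces $M\geq 3$, so $\log M>1$ and $M > M/\log M \geq 3(1+r)\delta^{-2}K_\mu(V) \geq 3N$), and when $M$ is so small that this fails, \eqref{eq:gramian-proximity} is vacuous because $2M^{-r}\geq 1$. Your proof is correct and follows the same approach as the cited source.
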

The quantity $\bs{A}_{\calA}^* \bs{A}_\calA$ is the Gramian of the basis $v_i$ with respect to the discrete inner product $\langle\cdot,\cdot\rangle_{2,\calA}$, and thus \eqref{eq:gramian-proximity} quantifies the $L^2$ proximity of a discrete measure supported on $\calA$ to $\mu$ on the space $V$. The sample count complexity \eqref{eq:sample-count} couples $M$ and $N$ with dependence on the proximity parameter $\delta$ and the success parameter $r$, but also involves the Bernstein-Markov factor $K_\mu(V)$. When $V$ is a polynomial space defined by multi-index set $\Sigma$, many standard continuous probability measures yield extremely large $K_\mu(V)$, often depending exponentially on $d$ and algebraically on the maximum polynomial degree in $\Sigma$ \cite{chkifa_discrete_2015}.

The authors in \cite{hampton_coherence_2015,narayan_christoffel_2016} note that introducing weights related to $\lambda(x)$ into the least-squares algorithm would result in a procedure with optimal (minimal) sample count. The authors in \cite{cohen_optimal_2016} propose a computationally feasible procedure for drawing samples from the induced measure $\mu_V$ using this weighting idea, and arrive at the following result:
\begin{theorem}[\cite{cohen_optimal_2016}]\label{thm:optimal-ls}
  Let $\calA = \left\{ X_m \right\}_{m=1}^M$, where the $X_m$ are independent and identically distributed draws of a random variable distributed according to the probability measure $\mu_V$. Introduce weights $\omega_m \coloneqq \lambda_V(x)$, and define the diagonal $M \times M$ matrix $\bs{W}$ with entries $(W)_{m,m} = \omega_m$. With $r$ and $\delta$ as in Theorem \ref{thm:ls}, assume
  \begin{align}\label{eq:christoffel-sample-count}
    \frac{M}{\log M} \geq \frac{3(1+r)}{\delta^2} N.
  \end{align}
  Then 
  \begin{align}
    \mathrm{Pr} \left[ \left\| \bs{A}_\calA^* \bs{W} \bs{A}_{\calA} - \bs{I} \right\| \geq \delta \right] \leq 2 M^{-r}.\label{good}
  \end{align}
\end{theorem}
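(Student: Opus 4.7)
The plan is to realize Theorem \ref{thm:optimal-ls} as a direct corollary of Theorem \ref{thm:ls} applied to the reweighted pair $(W, \mu_V)$ rather than to $(V,\mu)$. The key observation, already prepared in Section \ref{sec:weighted-spaces}, is that the functions $w_i(x) = \sqrt{\lambda(x)}\, v_i(x)$ form an $L^2_{\mu_V}$-orthonormal basis for $W$, and that the Bernstein--Markov factor of this weighted space is exactly optimal: $K_{\mu_V}(W) = N$. This is precisely the ingredient that turns the condition \eqref{eq:sample-count} in Theorem \ref{thm:ls} into the optimal condition \eqref{eq:christoffel-sample-count}.

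First I would translate the weighted Vandermonde $\bs{W}^{1/2}\bs{A}_\calA$ back into the unweighted Vandermonde of the basis $\{w_i\}$ against the samples $X_m \sim \mu_V$. Setting $\bs{B}_\calA$ to have entries $(\bs{B}_\calA)_{m,i} = \frac{1}{\sqrt{M}} w_i(X_m)$, a one-line computation gives
\begin{equation*}
(\bs{W}^{1/2}\bs{A}_\calA)_{m,i} = \tfrac{1}{\sqrt{M}}\sqrt{\lambda(X_m)}\, v_i(X_m) = (\bs{B}_\calA)_{m,i},
\end{equation*}
so that $\bs{A}_\calA^* \bs{W} \bs{A}_\calA = \bs{B}_\calA^* \bs{B}_\calA$. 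In other words, the event in \eqref{good} is identical to the event that the $L^2_{\mu_V}$-Gramian of the orthonormal basis $\{w_i\}$ of $W$ under the empirical measure on $\calA$ is within $\delta$ of the identity in spectral norm.

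Next, I would invoke Theorem \ref{thm:ls} with the triple $(\mu_V, W, \{w_i\})$ in place of $(\mu, V, \{v_i\})$. The samples $X_m$ are i.i.d.\ from $\mu_V$ by construction, so the hypotheses of Theorem \ref{thm:ls} are met provided
\begin{equation*}
\frac{M}{\log M} \;\geq\; \frac{3(1+r)}{\delta^2}\, K_{\mu_V}(W).
\end{equation*}
Because $K_{\mu_V}(W) = N$, this reduces exactly to the assumed sample count \eqref{eq:christoffel-sample-count}, and Theorem \ref{thm:ls} yields the probability bound $2M^{-r}$ on the event that $\|\bs{B}_\calA^*\bs{B}_\calA - \bs{I}\| \geq \delta$, which by the identity above is the event in \eqref{good}.

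The only mild subtlety — and the closest thing here to an obstacle — is bookkeeping the measure change consistently: the basis $\{v_i\}$ is orthonormal with respect to $\mu$, not $\mu_V$, so one must resist applying Theorem \ref{thm:ls} directly to it with samples from $\mu_V$, and instead absorb the density $\dx{\mu_V}/\dx{\mu} = K(x,x)/N = 1/\lambda(x)$ into the diagonal weight matrix $\bs{W}$. Once this reweighting is written out and the orthonormality of $\{w_i\}$ in $L^2_{\mu_V}$ together with the identity $K_{\mu_V}(W) = N$ are invoked, the result follows immediately; no new concentration estimate is needed beyond what Theorem \ref{thm:ls} already provides.
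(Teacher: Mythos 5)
Your reduction is correct, and it is essentially the argument behind the cited result. Note first that the paper itself does not prove Theorem \ref{thm:optimal-ls}: it cites it from \cite{cohen_optimal_2016}. What you have done is reconstruct the proof by reducing the weighted statement to Theorem \ref{thm:ls} applied to the triple $(\mu_V, W, \{w_i\})$, and each step checks out. The algebraic identity $\bs{A}_\calA^*\bs{W}\bs{A}_\calA = \bs{B}_\calA^*\bs{B}_\calA$ is correct (using that $\bs{W}^{1/2}$ is diagonal and real, so self-adjoint), the basis $\{w_i\}$ is genuinely $L^2_{\mu_V}$-orthonormal, and $K_{\mu_V}(W) = \bigl\|\sum_i |w_i|^2\bigr\|_\infty = \|\,\lambda(x)\sum_i|v_i|^2\,\|_\infty = N$ is constant, so the sample-count hypothesis of Theorem \ref{thm:ls} specializes exactly to \eqref{eq:christoffel-sample-count}. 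The only small thing worth flagging is that the statement's $\omega_m \coloneqq \lambda_V(x)$ is a typo for $\omega_m = \lambda(X_m)$; you silently use the correct version, which is the right reading. Your ``mild subtlety'' remark is also apt: if one naively feeds $\{v_i\}$ and samples from $\mu_V$ into Theorem \ref{thm:ls}, the hypotheses fail because $\{v_i\}$ is not $\mu_V$-orthonormal; the diagonal reweighting is precisely what restores orthonormality. This same change of variables is used later in the paper (in the proof of Corollary \ref{cor:norm-equivalence}(2), where $\sqrt{\bs{W}}\bs{A}_\calA \bs{u}$ is identified with the vector of $\tfrac{1}{\sqrt{M}}w(X_m)$ values), so your argument is consistent with the paper's own machinery.
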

Note that the sample complexity \eqref{eq:christoffel-sample-count} is near-optimal, up to the $\log M$ factor.

If a mesh $\calA$ satisfies the conditions of either Theorem \ref{thm:ls} or \ref{thm:optimal-ls}, then we can establish equivalences between discrete and continuous $L^2$ norms.
\begin{corollary}\label{cor:norm-equivalence}
  \begin{subequations}\label{eq:norm-equivalence}
  \begin{enumerate}
    \item Suppose $\calA$ is a mesh satisfying the conditions of Theorem \ref{thm:ls}. Then with probability at least $1 - 2 M^{-r}$,
      \begin{align}\label{eq:norm-equivalence-p}
        \frac{1}{1+\delta} \left\| v \right\|^2_{2,\calA} &\leq \left\| v \right\|^2_{2,\mu} \leq \frac{1}{1-\delta} \left\| v \right\|^2_{2,\calA}, & v &\in V
      \end{align}
    \item Let $\calA$ be a mesh satisfying the conditions of Theorem \ref{thm:optimal-ls}. Then with probability at least $1 - 2 M^{-r}$,
      \begin{align}\label{eq:norm-equivalence-q}
        \frac{1}{1+\delta} \left\| w \right\|^2_{2,\calA} \leq \left\| w \right\|^2_{2,\mu_V} &\leq \frac{1}{1-\delta} \left\| w \right\|^2_{2,\calA},& w &\in W,
      \end{align}
      where $W$ is the space \eqref{eq:W-def} of $\sqrt{\lambda}$-weighted $V$ functions.
  \end{enumerate}
  \end{subequations}
\end{corollary}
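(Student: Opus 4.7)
The plan is to show that both statements are straightforward consequences of the spectral bound on the (weighted) Gram matrix together with the orthonormality of the chosen bases. Throughout, I would work with the coefficient representation: for $v \in V$, write $v = \sum_{i=1}^N c_i v_i$, so that by orthonormality of $\{v_i\}$ in $L^2_\mu$ we have $\|v\|_{2,\mu}^2 = \|\bs{c}\|_2^2$, while
\begin{align*}
  \|v\|_{2,\calA}^2 = \frac{1}{M}\sum_{m=1}^M |v(X_m)|^2 = \|\bs{A}_\calA \bs{c}\|_2^2 = \bs{c}^\ast \bs{A}_\calA^\ast \bs{A}_\calA \bs{c},
\end{align*}
where the last equality uses the specific normalization $(\bs{A}_\calA)_{m,i} = \tfrac{1}{\sqrt{M}} v_i(X_m)$ built into the definition of $\bs{A}_\calA$.

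For the first part, I would invoke Theorem \ref{thm:ls}: on the event $\{\|\bs{A}_\calA^\ast \bs{A}_\calA - \bs{I}\| \leq \delta\}$, which has probability at least $1 - 2M^{-r}$, the Rayleigh quotient inequality gives $(1-\delta)\|\bs{c}\|_2^2 \leq \bs{c}^\ast \bs{A}_\calA^\ast \bs{A}_\calA \bs{c} \leq (1+\delta)\|\bs{c}\|_2^2$. Substituting the two norm identities above yields
\begin{align*}
  (1-\delta)\|v\|_{2,\mu}^2 \leq \|v\|_{2,\calA}^2 \leq (1+\delta)\|v\|_{2,\mu}^2, \qquad v \in V,
\end{align*}
and rearranging produces \eqref{eq:norm-equivalence-p}.

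For the second part, the analogous computation applies, but with the weights $\omega_m = \lambda(X_m)$ folded in. Any $w \in W$ can be written uniquely as $w = \sqrt{\lambda}\, v$ with $v = \sum_i c_i v_i \in V$, so equivalently $w = \sum_i c_i w_i$ with $\{w_i\}$ the $L^2_{\mu_V}$-orthonormal basis of $W$; hence $\|w\|_{2,\mu_V}^2 = \|\bs{c}\|_2^2$. Meanwhile,
\begin{align*}
  \bs{c}^\ast \bs{A}_\calA^\ast \bs{W} \bs{A}_\calA \bs{c} = \frac{1}{M}\sum_{m=1}^M \lambda(X_m) |v(X_m)|^2 = \frac{1}{M}\sum_{m=1}^M |w(X_m)|^2 = \|w\|_{2,\calA}^2,
\end{align*}
where now $X_m \stackrel{\mathrm{i.i.d.}}{\sim} \mu_V$. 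Applying Theorem \ref{thm:optimal-ls} on the event where $\|\bs{A}_\calA^\ast \bs{W} \bs{A}_\calA - \bs{I}\| \leq \delta$ (which occurs with probability at least $1 - 2M^{-r}$) and rearranging again gives \eqref{eq:norm-equivalence-q}.

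There is no real obstacle here; the only point requiring a bit of care is verifying that the weighting in the definition of $\bs{W}$ in Theorem \ref{thm:optimal-ls} corresponds precisely to passing from $(V, \mu)$ to $(W, \mu_V)$, so that the ``coefficient-to-norm" dictionary is consistent in both parts. Once that algebraic identification is made, the two inequalities follow by a single Rayleigh quotient argument applied to the relevant Gram-type matrix.
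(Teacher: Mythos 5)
Your proposal is correct and takes essentially the same approach as the paper: expand $v$ (resp. $w$) in the relevant $L^2$-orthonormal basis, identify the discrete squared norm as the quadratic form $\bs{c}^\ast \bs{A}_\calA^\ast \bs{A}_\calA \bs{c}$ (resp. $\bs{c}^\ast \bs{A}_\calA^\ast \bs{W} \bs{A}_\calA \bs{c}$), and apply the spectral bound from Theorem \ref{thm:ls} (resp. Theorem \ref{thm:optimal-ls}) as a two-sided Rayleigh-quotient estimate. The only cosmetic difference is that the paper proves part 2 in detail and declares part 1 analogous, whereas you spell out part 1 first.
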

\begin{proof}
 We prove the second statement; the proof of the first statement is similar. 
 For an arbitrary $w \in W$, represent $w(x) = \sum_{i=1}^N u_i w_i(x)$ according to the $L^2_{{\mu_V}}$-orthonormal basis $\left\{ w_i\right\}_{i=1}^N$, so that $\left\| w \right\|_{2, \mu_{V}} = \left\| \bs{u} \right\|$ for $\|\cdot\|$ the Euclidean norm on vectors. Also, 
  \begin{align*}
    \left( \sqrt{\bs{W}} \bs{A}_{\mathcal{A}} \bs{u} \right)_m = \sum_{i=1}^N \sqrt{\frac{\lambda\left(X_m\right)}{M}} u_i w_i \left(X_m\right) = \frac{1}{\sqrt{M}} \sum_{i=1}^N u_i w_i\left(X_m\right) = \frac{1}{\sqrt{M}} w\left(X_m\right).
  \end{align*}
  Thus, $\left\| w \right\|_{2,\calA} = \left\| \sqrt{\bs{W}} \bs{A}_{\mathcal{A}} \bs{u}\right\|$. Assuming the complement of the probabilistic event in \eqref{good},  
  \begin{align*}
    \left\| w \right\|^2_{2,\calA} = \left\| \sqrt{\bs{W}} \bs{A}_{\mathcal{A}} \bs{u}\right\|^2 &= \bs{u}^* \left( \bs{I} + \left( \bs{A}^*_{\calA} \bs{W} \bs{A}_{\calA} - \bs{I} \right) \right) \bs{u} \\
                                                                                                 & \leq \left(1 + \delta\right)\left\| \bs{u} \right\|^2  = \left(1 + \delta\right) \left\| w\right\|^2_{2,\mu_V}.
  \end{align*}
  This establishes the lower inequality in \eqref{eq:norm-equivalence-q}. The upper inequality is shown in the same way:
  \begin{align*}
    \left\| w \right\|^2_{2,\calA} = \left\| \sqrt{\bs{W}} \bs{A}_{\mathcal{A}} \bs{u}\right\|^2 &= \bs{u}^* \left( \bs{I} - \left( \bs{I} - \bs{A}^*_{\calA} \bs{W} \bs{A}_{\calA} \right) \right) \bs{u} \\
                                                                                                 & \geq \left(1 + \delta\right)\left\| \bs{u} \right\|^2 = \left(1 - \delta\right) \left\| w\right\|^2_{2,\mu_V}.
  \end{align*}
\end{proof}

\subsection{Sampling from $\mu$}

Since $V$ is a subspace of $L^2_\mu$, it is reasonable to believe that taking a large number of i.i.d. samples from $\mu$ will eventually allow one to approximate the $L^\infty$ norm of any element in $V$.  
Theorem \ref{thm:V-linf-bounds} below shows that for a fixed subspace $V$, i.i.d. samples yield an equivalence relation between the discrete and continuous maximum norms.
\begin{theorem}\label{thm:V-linf-bounds}
  Let $V$ be a given subspace of dimension $N$, and assume that $M$ is large enough to satisfy \eqref{eq:sample-count} for some $\delta \in (0,1)$ and $r > 0$. Let $\mathcal{A}$ have size $M$ with elements comprised of i.i.d. samples from $\mu$. Then, with probability $1 - 2 M^{-r}$, we have
  \begin{align}\label{eq:mu-i.i.d.-wam}
    \left\| v \right\|_\infty &\leq \sqrt{\frac{K_\mu(V)}{1 - \delta}} \left\| v \right\|_{\infty, \mathcal{A}}, & v &\in V.
  \end{align}
\end{theorem}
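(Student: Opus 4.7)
The plan is to chain three elementary inequalities, two of which have already been established in the paper and one of which is a trivial comparison between discrete $\ell^2$ and $\ell^\infty$ norms.

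First I would invoke Corollary \ref{cor:norm-equivalence}, item (1), which applies because the hypothesis \eqref{eq:sample-count} of Theorem \ref{thm:ls} is exactly what we have assumed. This yields the event
\begin{align*}
  \|v\|_{2,\mu}^2 \;\leq\; \frac{1}{1-\delta}\, \|v\|_{2,\calA}^2, \qquad v \in V,
\end{align*}
which holds with probability at least $1 - 2M^{-r}$. On this same event, we may chain with the continuous $L^2$-$L^\infty$ bound \eqref{eq:l2-linfty}, namely $\|v\|_\infty^2 \leq K_\mu(V)\, \|v\|_{2,\mu}^2$ for $v \in V$, to get
\begin{align*}
  \|v\|_\infty^2 \;\leq\; \frac{K_\mu(V)}{1-\delta}\, \|v\|_{2,\calA}^2.
\end{align*}

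Finally I would use the deterministic comparison $\|v\|_{2,\calA}^2 = \tfrac{1}{M}\sum_{x \in \calA} |v(x)|^2 \leq \|v\|_{\infty,\calA}^2$, valid for any function and any finite set $\calA$, to replace the right-hand side by $\|v\|_{\infty,\calA}^2$. Taking square roots gives exactly \eqref{eq:mu-i.i.d.-wam}, and the probability of the bound failing is controlled by the same $2M^{-r}$ tail from Theorem \ref{thm:ls}.

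There is no real obstacle here; the only ``work'' is book-keeping the probabilistic event. In particular, the constant $\sqrt{K_\mu(V)/(1-\delta)}$ arises from a single application of the Bernstein--Markov factor in \eqref{eq:l2-linfty} combined with the one-sided $L^2$ stability of the random Gramian; the other side of the stability bound in \eqref{eq:norm-equivalence-p} is not needed for this particular direction of comparison. This also explains why the resulting WAM constant is of order $\sqrt{K_\mu(V)}$, so that $b$ will ultimately scale like $q^\ast/2$ once $K_\mu(V_n)$ is bounded by $N_n^{q^\ast + \epsilon}$.
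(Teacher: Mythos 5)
Your proof is correct and is essentially identical to the paper's: both chain the Bernstein--Markov bound \eqref{eq:l2-linfty}, the one-sided discrete-continuous $L^2$ stability from Corollary \ref{cor:norm-equivalence} (item 1), and the trivial bound $\|v\|_{2,\calA} \leq \|v\|_{\infty,\calA}$, then take square roots. No further comment is needed.
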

\begin{proof}
  For any $v \in V$, we have 
  \begin{align}\label{eq:muV-linf-bounds}
    \left\| v \right\|^2_\infty &\stackrel{\eqref{eq:l2-linfty}}{\leq} K_\mu(V) \left\| v \right\|^2_{2, \mu} \stackrel{\eqref{eq:norm-equivalence-p}}{\leq} \frac{K_\mu(V)}{1 - \delta} \left\| v \right\|^2_{2, \mathcal{A}} \leq \frac{K_\mu(V)}{1 - \delta} \left\| v \right\|^2_{\infty, \mathcal{A}},
  \end{align}
  where the second inequality holds with probability $1 - 2 M^{-r}$.
\end{proof}
Note that this theorem is suboptimal: not only do we require $M/N$ to scale like $K_\mu(V)$, but we also pay a penalty factor of $K_\mu(V)$ in the norm comparability result \eqref{eq:mu-i.i.d.-wam}. Nevertheless, we can use this construction to form asymptotic weakly admissible meshes: Theorem \ref{thm:V-linf-bounds} together with the Borel-Cantelli lemma yields the following result.

\begin{theorem}\label{thm:mu-wam}
  Let $\left\{V_n \right\}_{n=0}^\infty$ be given. For each $n$, define 
  \begin{align*}
    \calA_n &\coloneqq \left\{X_{m} \right\}_{m=1}^{M_n}, & X_{m} \stackrel{\text{i.i.d.}}{\sim} \mu.
  \end{align*}
If $M_n = 25q^* K_\mu(V_n) \log N_n$, then with probability 1,  $\left\{\calA_n\right\}_{n=0}^\infty$ forms an asymptotic WAM for $\{V_n\}_{n=0}^\infty$ and $\mu$ with exponents $a = q^\ast + \tau$ for any $\tau > 0$, and $b = \frac{q^\ast}{2} + \tau$.
\end{theorem}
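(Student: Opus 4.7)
The strategy is to apply Theorem \ref{thm:V-linf-bounds} termwise in $n$ with a well-chosen pair $(\delta, r)$, and then eliminate the resulting failure events by a Borel--Cantelli argument. For each $n$, Theorem \ref{thm:V-linf-bounds} furnishes an event $E_n$ of probability at least $1 - 2 M_n^{-r}$ on which
\begin{align*}
  \|v\|_\infty \leq \sqrt{\tfrac{K_\mu(V_n)}{1-\delta}}\, \|v\|_{\infty,\calA_n}, \qquad \forall v \in V_n,
\end{align*}
provided the sample-count hypothesis \eqref{eq:sample-count} is met at level $n$. The task is to choose $(\delta, r)$ satisfying two competing constraints: (i) \eqref{eq:sample-count} is eventually implied by $M_n = 25 q^\ast K_\mu(V_n)\log N_n$, and (ii) $\sum_n 2 M_n^{-r} < \infty$, so that Borel--Cantelli applies.

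For (i), the definition \eqref{eq:qast-def} of $q^\ast$ as a limsup gives $\log K_\mu(V_n) = (q^\ast + o(1)) \log N_n$ as $n\to\infty$, and therefore
\begin{align*}
  \log M_n = \log K_\mu(V_n) + \log\log N_n + O(1) = (q^\ast + o(1)) \log N_n,
\end{align*}
which implies $M_n / \log M_n = (25 + o(1))\, K_\mu(V_n)$. Consequently \eqref{eq:sample-count} holds for all sufficiently large $n$ as soon as $3(1+r)/\delta^2 < 25$. For (ii), since $K_\mu(V_n) \geq N_n$ and $N_n$ is a strictly increasing integer sequence, $M_n \gtrsim n \log n$, so $\sum_n M_n^{-r}$ converges for any $r > 1$. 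Both constraints can be met simultaneously: for instance, $\delta = 1/2$ with any $r \in (1, 13/12)$ satisfies $3(1+r)/\delta^2 < 25$ and $r > 1$.

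Borel--Cantelli then implies that, with probability $1$, the bad event $E_n^c$ occurs for only finitely many $n$. On this probability-one event, there is an almost-surely finite random index $N(\omega)$ such that for every $n \geq N(\omega)$ the norm-comparability bound holds with WAM constant $C_n = \sqrt{K_\mu(V_n)/(1-\delta)}$. Invoking the limsup definition of $q^\ast$ once more, for any prescribed $\tau > 0$ one has $K_\mu(V_n) \leq N_n^{q^\ast + \tau}$ for all $n \geq N_\tau$, which yields $C_n \leq k_b N_n^{q^\ast/2 + \tau}$ after absorbing the $1/\sqrt{1-\delta}$ factor. Similarly, the mesh size $|\calA_n| = M_n = 25 q^\ast K_\mu(V_n) \log N_n \leq k_a N_n^{q^\ast + \tau}$, since the $\log N_n$ factor and the constants are absorbed into an arbitrarily small exponent increment. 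Reparametrizing $\tau$ if necessary gives the claimed exponents $a = q^\ast + \tau$ and $b = q^\ast/2 + \tau$.

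The one genuinely delicate point is the choice of $(\delta, r)$: the Bernstein--Markov sample-count threshold of Theorem \ref{thm:ls} scales like $3(1+r)/\delta^2$, while convergence of $\sum M_n^{-r}$ demands $r > 1$ strictly. These two requirements force the coefficient in $M_n$ to exceed $\inf\{3(1+r)/\delta^2 : r > 1, \delta < 1\} = 6$. The prescribed constant $25$ in the statement provides comfortable slack for both constraints to hold simultaneously; the exponents $(a, b) = (q^\ast + \tau, q^\ast/2 + \tau)$ themselves are insensitive to the precise constant and arise solely from the asymptotic form of $K_\mu(V_n)$.
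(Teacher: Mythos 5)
Your proof is correct and takes essentially the same route as the paper's: fix $\delta = 1/2$ and $r$ slightly above $1$, verify the sample-count condition \eqref{eq:sample-count} holds for large $n$ via the limsup definition of $q^\ast$, obtain the $L^\infty$ bound from Theorem \ref{thm:V-linf-bounds}, and clean up via Borel--Cantelli. The only (cosmetic) imprecision is writing $\log K_\mu(V_n) = (q^\ast + o(1))\log N_n$ as if the limit exists; since $q^\ast$ is only a $\limsup$, this should be a one-sided bound $\log K_\mu(V_n) \leq (q^\ast + o(1))\log N_n$, which is all the argument actually uses.
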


\begin{proof}
We use the abbreviation $K_n \coloneqq K_\mu(V_n)$ to reduce notational clutter. 
Set $\delta = 1/2$. Our assumptions ensure that
\begin{align*}
\liminf_{n\to\infty}\frac{M_n}{K_n \log M_n}&=\liminf_{n\to\infty}\frac{25q^*\log N_n}{\log K_n +\log (25q^*\log N_n)}\\
& = \liminf_{n\to\infty}\frac{25q^*\log N_n}{\log K_n}\stackrel{\eqref{eq:qast-def}}{=}\frac{25q^*}{q^*}=25> 12(1+r)
\end{align*}
for $1<r<13/12$, so that for sufficiently large $n$, 
\begin{align*}
\frac{M_n}{\log M_n}\geq 12(1+r)K_n = \frac{3(1+r)}{\delta^2}K_n.
\end{align*}
The above condition verifies that \eqref{eq:sample-count} is satisfied. It follows from Theorem \ref{thm:V-linf-bounds} that for any $\tau>0$ and $1<r<13/12$, with probability $1-2 M_n^{-r}$,  
\begin{align}
 \left\| v \right\|_\infty &\leq \sqrt{2K_\mu(V)} \left\| v \right\|_{\infty, \mathcal{A}}\leq C(\tau)N_n^{\frac{q^*}{2}+\tau} \left\| v \right\|_{\infty, \mathcal{A}}, & v &\in V,\label{eq:V-bounds}
\end{align}
where $C(\tau)>0$ is some constant depending only on $\tau$. 
In this case, 
\begin{align*}
\limsup_{n\to\infty}\frac{\log M_n}{\log N_n} = \frac{\log K_n}{\log N_n} = q^*. 
\end{align*}
Thus, there exists a sufficiently large constant $H(\tau)>0$ such that
\begin{align*}
&|\mathcal A_n| = M_n\leq H(\tau)N_n^{q^*+\tau}&\forall n\in\N. 
\end{align*}
Define $E_n$ as the probabilistic event that the above inequality holds. Since  
  \begin{align*}
\sum_{n=0}^{\infty} \mathrm{Pr}[E_n^c]\leq 2\sum_{n=0}^\infty M_n^{-r}\leq 2\sum_{n=0}^\infty \min\{n^{-r},1\} < \infty,
  \end{align*}
by the Borel-Cantelli Lemma, the probability that $E_n^c$ happens infinitely often is 0.
That is, outside a null set, for any realization $\omega$, there exists a sufficiently large $N(\omega)$, such that
\begin{align}
& \left\| v \right\|_\infty \leq C(\tau)N_n^{\frac{q^*}{2}+\tau} \left\| v \right\|_{\infty, \mathcal{A}_n(\omega)}&\forall n>N(\omega).\label{abv}
\end{align}
Thus, by Definition \ref{def:Vwam}, $\{\mathcal{A}_n\}_{n=0}^\infty$ forms an asymptotic WAM with $a = q^\ast + \tau, b = \frac{q^\ast}{2} + \tau$, i.e., $k_a = H(\tau), k_b = C(\tau)$. 
\end{proof}

By combining the above result with Theorem \ref{thm:WAM-con}, we can produce a (classical) WAM.
\begin{corollary}\label{cor:mu-wam}
  Assume the conditions of Theorem \ref{thm:mu-wam}. If in addition $\{V_n\}_{n=0}^\infty$ is a $\mu$ZC sequence, then with probability 1, $\{\mathcal{A}_n\}_{n=0}^\infty$ is a (classical) WAM with exponents $a = q^\ast + \tau$ and $b = \frac{q^\ast}{2} + \tau$ for any $\tau > 0$.
\end{corollary}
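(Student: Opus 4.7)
The plan is to recognize that this corollary is a direct consequence of two results already established in the excerpt: Theorem \ref{thm:mu-wam} and Theorem \ref{thm:WAM-con}. No new probabilistic estimates are required; the work is purely one of hypothesis-checking and assembly. Specifically, Theorem \ref{thm:mu-wam} already produces an asymptotic WAM with the claimed exponents on a probability-1 event, and Theorem \ref{thm:WAM-con} converts an asymptotic WAM into a classical WAM on a probability-1 event provided the sampling measures are dominated by $\mu$ and the sequence $\{V_n\}$ is $\mu$ZC.

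The first step is to apply Theorem \ref{thm:mu-wam}, which gives a probability-1 event $\Omega_1$ on which $\{\mathcal{A}_n\}_{n=0}^\infty$ is an asymptotic WAM with parameters $a = q^\ast + \tau$ and $b = \frac{q^\ast}{2} + \tau$ for any fixed $\tau > 0$. Next, I would verify the hypotheses of Theorem \ref{thm:WAM-con}: here the sampling measure at level $n$ is $\rho_n = \mu$, so the absolute-continuity condition $\rho_n \ll \mu$ is trivial, and by assumption $\{V_n\}_{n=0}^\infty$ is $\mu$ZC. Thus Theorem \ref{thm:WAM-con} applies and yields a probability-1 event $\Omega_2$ on which every asymptotic WAM with parameters $(a,b)$ generated by the given sampling procedure is in fact a classical WAM with the same parameters.

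Finally, intersecting $\Omega_1 \cap \Omega_2$ (still a probability-1 event), on this event $\{\mathcal{A}_n\}_{n=0}^\infty$ is simultaneously an asymptotic WAM with exponents $(q^\ast+\tau, q^\ast/2+\tau)$ and, by the implication in Theorem \ref{thm:WAM-con}, a classical WAM with the same exponents. Since $\tau > 0$ was arbitrary, this establishes the conclusion.

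There is no substantive obstacle — the only thing to be careful about is that the two probability-1 events are intersected correctly, and that the asymptotic-WAM parameters proven in Theorem \ref{thm:mu-wam} are exactly the parameters fed into Theorem \ref{thm:WAM-con}. One minor bookkeeping point: Theorem \ref{thm:WAM-con} is stated for a single pair $(a,b)$, so strictly speaking the argument above should be applied for each fixed $\tau > 0$ separately (which is fine, since the statement of the corollary is ``for any $\tau > 0$''). The proof is therefore essentially a one-line assembly of the previously proved results.
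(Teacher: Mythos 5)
Your argument is correct and is exactly the paper's own (implicit) proof: the paper states the corollary immediately after the remark ``By combining the above result with Theorem~\ref{thm:WAM-con}, we can produce a (classical) WAM,'' and gives no further proof, so the intended argument is precisely the assembly of Theorem~\ref{thm:mu-wam} (asymptotic WAM with probability~1) and Theorem~\ref{thm:WAM-con} (upgrade to classical WAM, applicable since $\rho_n=\mu\ll\mu$ and $\{V_n\}$ is $\mu$ZC). Your bookkeeping remark about fixing $\tau$ before intersecting the two probability-1 events is a nice clarifying touch but matches the paper's intent.
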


\subsection{Sampling from $\mu_V$}
Meshes generated by randomly sampling from $\mu$ are suboptimal, as shown above. The WAM exponents of such meshes are effectively $a = q^\ast$ and $b = \frac{q^\ast}{2}$. We can entirely remove the dependence of $a$ on $q^\ast$ by considering weighted meshes. This section essentially repeats the computations of the previous section, but by replacing $\mu$ with $\mu_V$ and $V$ with $W$. Since the proofs are almost identical to the ones in the previous section, we omit them for brevity. 
\begin{theorem}\label{thm:q-grid}
  With $V$ given, let $\left\{ X_m \right\}_{m=1}^M$ be a sequence of i.i.d. random variables distributed according to $\mu_V$. Assume that $M$ satisfies \eqref{eq:christoffel-sample-count} for some $r > 0$ and $\delta < 1$. Then with probability at least $1 - 2 M^{-r}$, 
  \begin{align}
    \left\| w \right\|_{\infty} &\leq \sqrt{\frac{N}{1 - \delta}} \left\| w \right\|_{\calA, \infty}, & w &\in W.\label{eq:weighted-rederive}
  \end{align}
\end{theorem}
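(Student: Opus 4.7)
The plan is to mirror the proof of Theorem \ref{thm:V-linf-bounds} verbatim, with three substitutions that together remove the $K_\mu(V)$ factor and replace it by the optimal constant $N$: swap $\mu$ for the $V$-induced measure $\mu_V$, swap $V$ for the weighted space $W = \sqrt{\lambda}\, V$, and use the optimal Bernstein--Markov identity $K_{\mu_V}(W) = N$ in place of the generic inequality $K_\mu(V) \leq \infty$.

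Concretely, I would first invoke part 2 of Corollary \ref{cor:norm-equivalence}, whose hypotheses are exactly \eqref{eq:christoffel-sample-count} plus the $\mu_V$-i.i.d.\ assumption on $\calA$. That result guarantees that, outside a probability-$2 M^{-r}$ event, one has the one-sided discrete-to-continuous $L^2$ comparison
\begin{align*}
\|w\|_{2,\mu_V}^2 \;\leq\; \frac{1}{1-\delta}\,\|w\|_{2,\calA}^2, \qquad w \in W.
\end{align*}
Next I would chain this with the optimal $L^2$--$L^\infty$ equivalence \eqref{eq:optimal-l2-linf}, namely $\|w\|_\infty^2 \leq N\,\|w\|_{2,\mu_V}^2$, to get $\|w\|_\infty^2 \leq \tfrac{N}{1-\delta}\|w\|_{2,\calA}^2$. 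Finally, the elementary bound $\|w\|_{2,\calA}^2 = \tfrac{1}{M}\sum_{x\in\calA}|w(x)|^2 \leq \|w\|_{\infty,\calA}^2$ is free from the definition of the discrete norms. Combining the three inequalities and taking square roots yields \eqref{eq:weighted-rederive} on the same probability-$(1 - 2M^{-r})$ event.

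There is no real obstacle here; the only thing to keep track of is that every object is evaluated against the right measure. In particular, $\calA$ is sampled from $\mu_V$ (not $\mu$), which is why the sharp sample complexity \eqref{eq:christoffel-sample-count} with constant $N$ applies, and why the Bernstein--Markov factor in the $L^2$-to-$L^\infty$ step is the optimal $N$ rather than the possibly much larger $K_\mu(V)$. The theorem is stated entirely in the weighted language of $W$, so no change of variables back to $V$ is needed inside the proof; the translation $w(x) = \sqrt{\lambda(x)}\,v(x)$ is only needed when one later applies the result to reweight an admissible mesh statement, which is beyond the scope of this theorem.
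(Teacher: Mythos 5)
Your proof is correct and is exactly the argument the paper has in mind: the authors explicitly omit this proof, noting it is ``almost identical'' to that of Theorem \ref{thm:V-linf-bounds}, and your chain $\|w\|_\infty^2 \leq N\|w\|_{2,\mu_V}^2 \leq \tfrac{N}{1-\delta}\|w\|_{2,\calA}^2 \leq \tfrac{N}{1-\delta}\|w\|_{\infty,\calA}^2$ via Corollary \ref{cor:norm-equivalence} part 2 is precisely that intended substitution of $(\mu,V,K_\mu(V))$ by $(\mu_V,W,N)$. (You also implicitly corrected the likely typo in \eqref{eq:optimal-l2-linf}, which should read $\sqrt{N}$ rather than $N$ in the unsquared form; your squared version $\|w\|_\infty^2 \leq N\|w\|_{2,\mu_V}^2$ is the right one.)
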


The WAM's described above provide ways to bound supremum norms of functions in the weighted space $W$.
We can translate these back into estimates on the space $V$ by paying a mild penalty factor.
\begin{theorem}
  Let $\mu$, $D$, and $V$ be given, and assume that $1 \in V$. Let $\left\{ X_m \right\}_{m=1}^M$ be i.i.d. samples from $\mu_V$. If, for some $r > 0$ and $0 < \delta < 1$, $M$ is large enough to satisfy \eqref{eq:christoffel-sample-count}, then with probability at least $1 - 2 M^{-r}$, 
  \begin{align*}
    \left\| v \right\|_{\infty} &\leq \sqrt{N} \sqrt{\frac{K_\mu(V)}{1 - \delta}} \left\| v \right\|_{\calA, \infty}, & v &\in V.
  \end{align*}
\end{theorem}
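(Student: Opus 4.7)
The plan is to translate the weighted bound \eqref{eq:weighted-rederive} from $W$ back to $V$ by exploiting the relation $w = \sqrt{\lambda}\, v$ together with the assumption $1 \in V$. The key observation is that the assumption $1 \in V$ yields a two-sided control on $\lambda(x)$, which is exactly what is needed to convert an $\infty$-norm bound on $W$ into one on $V$.

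First, I fix a nonzero $v \in V$ and set $w = \sqrt{\lambda}\, v \in W$ as in \eqref{eq:W-def}. Pointwise, $|v(x)| = |w(x)|/\sqrt{\lambda(x)}$, so using $\lambda(x) = N/K(x,x)$ and $K(x,x) \leq K_\mu(V)$ I obtain
\begin{align*}
  \|v\|_\infty \leq \|w\|_\infty \cdot \sup_{x \in D} \frac{1}{\sqrt{\lambda(x)}} = \|w\|_\infty \cdot \sqrt{\frac{K_\mu(V)}{N}}.
\end{align*}
For the discrete side, I use the hypothesis $1 \in V$: normalize so that $v_1 \equiv 1$ is one element of an $L^2_\mu$-orthonormal basis of $V$ (possible since $\mu$ is a probability measure). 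Then $K(x,x) = \sum_i |v_i(x)|^2 \geq |v_1(x)|^2 = 1$, which gives $\lambda(x) = N/K(x,x) \leq N$. Consequently
\begin{align*}
  \|w\|_{\infty,\calA} = \max_m \sqrt{\lambda(X_m)}\,|v(X_m)| \leq \sqrt{N}\, \|v\|_{\infty,\calA}.
\end{align*}

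Now I invoke Theorem \ref{thm:q-grid}: on an event of probability at least $1 - 2 M^{-r}$, we have $\|w\|_\infty \leq \sqrt{N/(1-\delta)}\, \|w\|_{\infty,\calA}$. Chaining the three inequalities gives
\begin{align*}
  \|v\|_\infty \;\leq\; \sqrt{\frac{K_\mu(V)}{N}}\, \|w\|_\infty \;\leq\; \sqrt{\frac{K_\mu(V)}{N}}\sqrt{\frac{N}{1-\delta}}\, \|w\|_{\infty,\calA} \;\leq\; \sqrt{N}\sqrt{\frac{K_\mu(V)}{1-\delta}}\, \|v\|_{\infty,\calA},
\end{align*}
which is the claimed bound. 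There is no real obstacle here; the only subtlety is recognizing that the assumption $1 \in V$ is precisely what lets one bound $\sqrt{\lambda}$ from above by $\sqrt{N}$, which transfers the weighted $\infty$-norm control on $W$ into an unweighted one on $V$ at the cost of an extra factor of $\sqrt{N}$.
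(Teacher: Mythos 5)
Your proof is correct and follows essentially the same route as the paper: bound $\|v\|_\infty$ above by $\sqrt{K_\mu(V)/N}\,\|w\|_\infty$, use $1\in V$ to get $\lambda\leq N$ and hence $\|w\|_{\infty,\calA}\leq\sqrt{N}\,\|v\|_{\infty,\calA}$, and chain these through Theorem \ref{thm:q-grid}. (In fact your write-up of the first step is slightly cleaner than the paper's, which has a harmless typo writing $\inf_x\lambda(x)$ where $\inf_x\sqrt{\lambda(x)}$ is meant.)
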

\begin{proof}
  Since $V$ contains constant functions, then let $v_1 \equiv 1$ be a particular choice of the first element in an orthonormal basis for $V$. Then we have
  \begin{align*}
   \lambda(x) =\frac{N}{\sum_{n=1}^N v_n^2} \leq N.
  \end{align*}
  Any $w \in W$ can be written as $\sqrt{\lambda} v$ for some $v \in V$, and so 
  \begin{subequations}\label{eq:thm-temp-1}
  \begin{align}
    \left\| w \right\|_\infty \geq \left\| v \right\|_\infty \inf_{x \in D} \lambda(x) \geq \left\| v \right\|_\infty \sqrt{\frac{N}{K_\mu(V)}}.
  \end{align}
  Likewise, 
  \begin{align}
    \left\| w\right\|_{\infty,\calA} \leq \left\| \sqrt{\lambda} \right\|_{\infty,\calA} \left\| v \right\|_{\infty,\calA} \leq \sqrt{N} \left\| v \right\|_{\infty,\calA}.
  \end{align}
  \end{subequations}
  Chaining \eqref{eq:weighted-rederive} with relations \eqref{eq:thm-temp-1} proves the theorem.
\end{proof}
Finally, we can generate a WAM using the result above:
\begin{theorem}\label{thm:muV-WAM}
  Let $\mu$, $D$, and $\left\{V_n \right\}_{n=0}^\infty$ be given, and assume that $1 \in V_0$. For each $n$, define 
  \begin{align*}
    \calA_n &\coloneqq \left\{X_{m} \right\}_{m=1}^{M_n}, & X_{m} \stackrel{\text{i.i.d.}}{\sim} \mu_{V_n}.
  \end{align*}
  Assume that $M_n = 25 N_n \log N_n$. Then $\left\{\calA_n\right\}_{n=0}^\infty$ forms an asymptotic weakly admissible mesh for $\{V_n\}_{n=0}^\infty$ with exponents $a = 1 + \tau$ for any $\tau > 0$, and $b = \frac{q^\ast + 1}{2}+\tau$.
\end{theorem}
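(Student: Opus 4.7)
The plan is to mirror the proof of Theorem \ref{thm:mu-wam}, but replace the $\mu$-sampled estimate from Theorem \ref{thm:V-linf-bounds} with the $\mu_V$-sampled bound from the theorem immediately preceding the statement, which asserts that under the sample-count condition \eqref{eq:christoffel-sample-count}, with probability at least $1 - 2M^{-r}$,
\[\|v\|_{\infty} \leq \sqrt{N K_\mu(V)/(1-\delta)}\,\|v\|_{\infty,\calA}, \qquad v \in V.\]
The improved sample complexity \eqref{eq:christoffel-sample-count}, which requires $M \gtrsim N\log M$ instead of $M \gtrsim K_\mu(V)\log M$, is precisely what drops $a$ from $q^\ast + \tau$ to $1 + \tau$; the extra $\sqrt{N}$ penalty in the comparability constant is what raises $b$ from $q^\ast/2 + \tau$ to $(q^\ast+1)/2 + \tau$. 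The hierarchical assumption $1 \in V_0$ propagates to every $V_n$, so the preceding theorem applies directly to each $V_n$.

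First I would fix $\delta = 1/2$ and verify, in perfect parallel with the proof of Theorem \ref{thm:mu-wam}, that
\[\liminf_{n\to\infty}\frac{M_n}{N_n \log M_n} = \liminf_{n\to\infty}\frac{25\log N_n}{\log N_n + \log(25\log N_n)} = 25 > 12(1+r)\]
for any $1 < r < 13/12$, so that \eqref{eq:christoffel-sample-count} is satisfied for all sufficiently large $n$. The preceding theorem then yields, with probability at least $1 - 2M_n^{-r}$,
\[\|v\|_{\infty} \leq \sqrt{2\, N_n K_\mu(V_n)}\,\|v\|_{\infty,\calA_n}, \qquad v \in V_n,\]
and by the definition \eqref{eq:qast-def} of $q^\ast$, for any $\tau > 0$ there exists $C(\tau)<\infty$ with $\sqrt{2\, N_n K_\mu(V_n)} \leq C(\tau)\, N_n^{(q^\ast+1)/2 + \tau}$ for all $n$. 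Since $\log N_n = o(N_n^\tau)$, one likewise obtains $H(\tau)<\infty$ with $|\calA_n| = M_n \leq H(\tau)\, N_n^{1+\tau}$ for all $n$.

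To conclude, let $E_n$ denote the event that the comparability inequality above holds at level $n$. Since $\{N_n\}$ is strictly increasing in $n$, one has $M_n \geq n$ eventually, so $\sum_n \Pr[E_n^c] \leq 2\sum_n M_n^{-r} < \infty$ for any $r > 1$. Borel--Cantelli then yields, outside a null event, an $N(\omega)$ beyond which the comparability holds for all $n > N(\omega)$, which by Definition \ref{def:Vwam} is exactly the claimed asymptotic WAM property with $a = 1 + \tau$ and $b = (q^\ast+1)/2 + \tau$.

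The main obstacle is essentially absent: the proof is a mechanical recombination of three ingredients already established in the paper, namely the preceding theorem that converts the Christoffel-weighted bound on $W$ back into an $L^\infty$ bound on $V$, the sample-count verification technique from Theorem \ref{thm:mu-wam}, and Borel--Cantelli. The only mild bookkeeping step is to confirm that both polylogarithmic corrections (one in the sample-count lower bound, one in the mesh-size upper bound) can be absorbed into the arbitrary $\tau$-slack in the two exponents, which is why the theorem asserts only \emph{asymptotic} WAM behavior rather than a cleaner non-asymptotic statement.
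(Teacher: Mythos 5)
Your proposal is correct and is exactly the argument the paper intends: the paper explicitly omits the proof of Theorem \ref{thm:muV-WAM}, stating it is ``almost identical'' to that of Theorem \ref{thm:mu-wam}, and your reconstruction replaces Theorem \ref{thm:V-linf-bounds} with the preceding weighted-space bound, swaps the sample-count condition \eqref{eq:sample-count} for \eqref{eq:christoffel-sample-count}, verifies $1\in V_n$ propagates from $1\in V_0$, and then runs the same $\delta=1/2$, $1<r<13/12$, Borel--Cantelli scheme. The bookkeeping with $q^\ast$ and the absorption of the $\log N_n$ factor into the $\tau$-slack are both handled correctly.
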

 
Since $\mu_{V_n} \ll \mu$ for any $n$, then combining the above result with Theorem \ref{thm:WAM-con}, produces a (classical) WAM.
\begin{corollary}\label{cor:muV-WAM}
  Assume the conditions of Thereom \ref{thm:muV-WAM}. If in addition $\{V_n\}_{n=0}^\infty$ is a $\mu$ZC sequence, then with probability 1 $\{\mathcal{A}_n\}_{n=0}^\infty$ is a (classical) WAM with exponents $a = 1 + \tau$ and $b = \frac{q^\ast+1}{2} + \tau$ for any $\tau > 0$.
\end{corollary}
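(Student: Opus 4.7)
The plan is to observe that this corollary is essentially a direct combination of Theorem \ref{thm:muV-WAM} (which delivers an \emph{asymptotic} WAM) with Theorem \ref{thm:WAM-con} (which upgrades asymptotic W/AM's to classical W/AM's when the underlying sequence is $\mu$ZC and the sampling measures are absolutely continuous with respect to $\mu$). So there is no new technical content to develop; the work is in verifying that all hypotheses of Theorem \ref{thm:WAM-con} are satisfied in the present setting.

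First I would invoke Theorem \ref{thm:muV-WAM} to conclude that, with probability $1$, $\{\mathcal{A}_n\}_{n=0}^\infty$ is an asymptotic WAM for $\{V_n\}_{n=0}^\infty$ with exponents $a = 1+\tau$ and $b = (q^\ast+1)/2 + \tau$. This gives the existence (almost surely) of $N(\omega)$ and constants $k_a(\omega), k_b(\omega)$ such that the asymptotic WAM inequalities hold for all $n \geq N(\omega)$.

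Next I would verify the absolute continuity condition required by Theorem \ref{thm:WAM-con}: the sampling measures are $\rho_n = \mu_{V_n}$, and by definition \eqref{eq:muV-def} we have $\mathrm{d}\mu_{V_n}(x) = \frac{1}{N_n}K(x,x)\,\mathrm{d}\mu(x)$, which is manifestly absolutely continuous with respect to $\mu$, so $\rho_n \ll \mu$ for every $n$. Combined with the hypothesis that $\{V_n\}_{n=0}^\infty$ is a $\mu$ZC sequence, both hypotheses of Theorem \ref{thm:WAM-con} hold.

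Applying Theorem \ref{thm:WAM-con} then yields, with probability $1$, that $\{\mathcal{A}_n\}_{n=0}^\infty$ is a classical WAM with the same exponents $a = 1+\tau$ and $b = (q^\ast+1)/2 + \tau$. The only potential subtlety — and hence the ``hard part'' if any — is checking that the almost-sure asymptotic WAM event from Theorem \ref{thm:muV-WAM} and the almost-sure upgrade event from Theorem \ref{thm:WAM-con} intersect on a probability-$1$ set, but this is automatic since a finite intersection of probability-$1$ events has probability $1$. No further computation is required.
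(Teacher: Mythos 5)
Your proposal matches the paper's own (implicit) proof: the paper simply notes that $\mu_{V_n} \ll \mu$ and then combines Theorem \ref{thm:muV-WAM} with Theorem \ref{thm:WAM-con}, which is exactly the route you take. The verification that $\mu_{V_n} \ll \mu$ from \eqref{eq:muV-def} and the observation that the two almost-sure events intersect on a probability-$1$ set are correct and complete the argument.
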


\section{Weighted coverings}\label{wcs}

Designing an AM, a stronger WAM with $b=0$, via random sampling relies on generating grids with good space-filling properties. This section provides some infrastructure for space-filling designs that we will use to generate AM's via random sampling, and hence plays an analogous role to Section \ref{sec:weighted-spaces} for our design of WAM's.  
Our results for AM's require stronger assumptions on the domain $D$ and subspaces $V_n$: Throughout this section, we assume that $D$ is compact and that the subspaces $V_n$ contain continuously differentiable functions for all $n\in\N$. Note that these extra assumptions allow polynomial approximation as a specialization.

A baseline for randomized space-filling designs is the uniform sampler on $D$, and the corresponding sufficient sampling size can be obtained by analyzing the covering number of the domain. In the deterministic context, low discrepancy sequences \cite{dick_digital_2010} are a standard approach for ``uniformly" filling a volume with points. However, this approach is more difficult when $D$ is not a hypercube, and so we will investigate randomized approaches via sampling.  To understand how points generated from an arbitrary measure fill the domain, we introduce the following definition of weighted covering:   

\begin{definition}[$f$-weighted covering]\label{am:def1}
Let $D\subset \R^d$ be compact and $f: D\to\R_+$ be a continuous function, where $\R_+ = (0, \infty)$. 
For $r>0$, let $B_r(x)=\{z\in\R^d: \|z-x\|_2\leq r\}$. 
Let $m_f = \min_{u\in D} f(u)>0$. 
For $y\in D$ and $r>0$, define 
\begin{align*}
F_r(y) = \frac{\min_{z\in B_r(y)\cap D}f(z)}{m_f}.
\end{align*}
A set $\mathcal N\subset D$ is called an $f$-weighted $\epsilon$-covering of $D$ if 
\begin{align}\label{eq:reps-def}
&D\subset\bigcup_{y\in\mathcal N}B_{r_y}(y)&r_y\leq r(y, \epsilon) :=\max_{c\geq 0}\min\left\{\epsilon F_c(y), c\right\}.  
\end{align}
In particular, when $f$ is a constant function, $F_r(y)\equiv 1$, so that an $f$-weighted $\epsilon$-covering is the same as an $\epsilon$-covering.
\end{definition}
 
The general idea behind an $f$-weighted $\epsilon$-covering that motivates the definition of $r$ in \eqref{eq:reps-def} is 
to cover points in $D$ using balls of radius proportional to $f$, with the location(s) $\argmin_{u\in D}f(u)$ covered by balls of radius $\epsilon$. 
In particular, we refer to the quantity 
\begin{align*}
&F_0(y) = \frac{f(y)}{m_f}\cdot\epsilon&y\in D
\end{align*}
as the \emph{local radius} at $y$. 
Under this definition, points within the same ball in a covering may have different local radius. 
To address this, we require that the local radius of any point in the ball must be at least $r$, where $r$ the radius of the ball. Simultaneously, we wish to choose $r$ as large as possible (hence the max-min condition in \eqref{eq:reps-def}).

A property of the weighted coverings introduced above that we exploit is that, as $\epsilon \downarrow 0$, one can obtain better covering numbers compared to a standard $\epsilon$-covering. To see why, first note that since $F_r(y)\geq 1$ for any $y\in D$ and $r>0$, then $r(y, \epsilon)\geq \epsilon$.  
This implies that any $\epsilon$-covering is also an $f$-weighted $\epsilon$-covering. 
Also, for fixed $\epsilon$, $F_c(y)$ is a continuous and non-increasing function of $c$ which evaluates to $1$ for $c\geq\text{diam}(D)$. 
Therefore, 
\begin{align}
r(y, \epsilon)=\epsilon F_{r(y, \epsilon)}(y).\label{nice}
\end{align}
It follows from \eqref{nice} that $r(y,\epsilon)$ is non-increasing in $\epsilon$.
As an immediate consequence, if $0<\epsilon_1<\epsilon_2\leq 1$, then
\begin{align*}
\frac{r(y, \epsilon_1)}{r(y,\epsilon_2)}=\frac{\epsilon_1F_{r(y, \epsilon_1)}(y)}{\epsilon_2F_{r(y, \epsilon_2)}(y)}\geq\frac{\epsilon_1}{\epsilon_2}.
\end{align*}
This implies that the radius of a weighted covering ball at a given point scales slower than $\epsilon$, and this in turn leads to a potentially better covering number rate constant as $\epsilon\to 0$.

In our particular case, we will make the choice of weight,
\begin{align*}
f(x) = R_n(x)^{-1}.
\end{align*}  
where we recall that $R_n(x)$ in given in \eqref{R} and associated to the subspace $V = V_n$.
If $V_n$ is a subspace of continuously differentiable functions, then such an $f(x)$ will satisfy the assumption in Definition \ref{am:def1}. 
The $f$-weighted covering number will be used to analyze the sampling properties of an $R_n$-weighted probability measure on $D$, which is defined as
\begin{align}
&\dx{\nu_n(x)} = \frac{R_n(x)^d}{\int_D R_n(x)^d \dx{x}}\dx{x} & x\in D.\label{am:nu}
\end{align}
Note that \eqref{am:nu} is well-defined as long as $R_n(x)\in L^d(D)$. 
The measure $\nu_n(x)$ plays a crucial role in the design of sampling for admissible meshes.

\subsection{The constant $p^\ast$}
Let $\mathcal S_{f, \epsilon}(D)$ be the set of $f$-weighted $\epsilon$-coverings of $D$.  
For any $\mathcal N\in\mathcal S_{f, \epsilon}(D)$, define
\begin{align} 
G_\mathcal N = \max_{y\in\mathcal N}\left(\frac{\max_{x\in B_{r(y, \epsilon)}\cap D}f(x)}{\min_{x\in B_{r(y, \epsilon)}\cap D}f(x)}\right)^d.\label{myG}
\end{align}
Note for every $y\in\mathcal N$, its covering radius satisfies $r_y\leq r(y, \epsilon)$.
When the strict inequality holds, elongating $r_y$ to make it equal to $r(y, \epsilon)$ will still result in an $f$-weighted $\epsilon$-covering by Definition \ref{am:def1}.
Thus, $G_\mathcal N$ evaluates the change of magnitude of $f$ for every $y\in\mathcal N$ in the most ``conservative" sense.
A quantity that will appear in our analysis for the AM exponents is
\begin{align}
&p^* = \limsup_{n\to\infty}\frac{\log\left(\inf_{\mathcal N\in\mathcal S_{f, \epsilon_n}(D)}|\mathcal N|G_\mathcal N\right)}{\log N_n}& \epsilon_n = (3R_\mu(V_n))^{-1} = \frac{1}{3\max_{x\in D} R_n(x)}\label{p^*}.
\end{align}
The constant $3$ above is non-essential and can be replaced with any constant greater than $2$ in the analysis in Section \ref{4}.  
Similar to $q^*$ for WAM's, $p^*$ will characterize the exponents AM's in our result.  
Thus, it is essential for $p^*$ to be finite to have practical interest. 
We next demonstrate that, provided the geometry of the level sets of $R_n(x)$ are ``regular'', $p^*$ is finite if $\int_DR_n(x)^d\dx{x}$ has polynomial growth as $n\to\infty$. In particular we emphasize Corollary \ref{cor:pfinite}, which ensures a finite $p^\ast$ under mild assumptions on $R_n$ and $D$.

\subsection{Finiteness of $p^\ast$}
We first introduce some extra notation. Let 
\begin{align}
\delta_n = \frac{1}{3\min_{x\in D} R_n(x)},\label{mydelta}
\end{align}
which measures the maximum local radius for points in $D$ when the minimizer of $f$ has local radius $\epsilon_n$.  
For any $\gamma>1$, define
\begin{align*}
T_{n,\gamma}: = \left\lceil\log_\gamma\left(\frac{\delta_n}{\epsilon_n}\right)\right\rceil = \left\lceil\log_\gamma\left(\frac{\max_{x\in D} R_n(x)}{\min_{x\in D} R_n(x)}\right) \right\rceil,
\end{align*}
and we subsequently partition $D$ based on level sets of $R_n$ as 
\begin{align*}
&D = \bigcup_{\ell=1}^{T_{n,\gamma}}A^{(n)}_{\ell,\gamma}& A^{(n)}_{\ell,\gamma} := \left\{x\in D: \gamma^{-\ell}\cdot\max_{x\in D} R_n(x)\leq R_n(x)\leq \gamma^{-\ell+1}\cdot\max_{x\in D} R_n(x)\right\}.
\end{align*}
The particular partition strategy that we employ will allow $\gamma$ to depend on $n$. Under technical assumptions associated with this type of partition, we can bound $p^*$ as follows:
\begin{theorem}\label{thm:new}
  Assume there exist two sequences $\{\gamma_n\}_{n=0}^\infty$ ($\gamma_n>1$) and $\{w_n\}_{n=0}^\infty$ such that 
\begin{enumerate}
\item Both $\gamma_n$ and $w_n$ have at most polynomial growth with respect to $N_n$:
\begin{align*}
  \zeta  &\coloneqq \limsup_{n\to\infty}\frac{\log \gamma_n}{\log N_n}<\infty, & 
  \kappa &\coloneqq \limsup_{n\to\infty}\frac{\log w_n}{\log N_n}<\infty.
\end{align*}
\item The level sets $A^{(n)}_{\ell, \gamma_n}$ are ``well-separated'', i.e., there exists an integer $q<\infty$ such that for every $n \in \N$ and for any $\ell'$ with $\ |\ell'-\ell| > q$, 
\begin{align*}
B_{\gamma_n^{\ell}\epsilon_n}(x)\cap A^{(n)}_{\ell',\gamma_n} &= \varnothing&\forall x\in A^{(n)}_{\ell,\gamma_n}.
\end{align*}
\item The (standard) covering number of $A^{(n)}_{\ell,\gamma_n}$ is $w_n$-comparable to its theoretical lower bound, i.e., 
\begin{align*}
  \mathsf N(A^{(n)}_{\ell,\gamma_n}, \gamma_n^{\ell-q-1}\epsilon_n)&\leq w_n\cdot\frac{\vol(A^{(n)}_{\ell,\gamma_n})}{\vol(B_{\gamma_n^{\ell-q-1}\epsilon_n}(0))}&\ell\in [T_{n,\gamma_n}], \ n \in \N,
\end{align*}
where $\mathsf N(\cdot, \epsilon)$ denotes the standard $\epsilon$-covering number of a set.   
\end{enumerate}
Then, 
\begin{align}
p^*\leq \kappa + (q+1)d\zeta + \limsup_{n\to\infty}\frac{\log\left(\int_D R_n(x)^d\dx{x}\right)}{\log N_n}.\label{p^*bdd}
\end{align}
\end{theorem}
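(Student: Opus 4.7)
The plan is to explicitly construct, for each sufficiently large $n$, an $f$-weighted $\epsilon_n$-covering $\mathcal N_n \in \mathcal S_{f,\epsilon_n}(D)$ organized around the level-set partition $D = \bigcup_{\ell=1}^{T_{n,\gamma_n}} A^{(n)}_{\ell,\gamma_n}$, and then to bound $|\mathcal N_n|\,G_{\mathcal N_n}$ from above. On each slab $A^{(n)}_{\ell,\gamma_n}$ the weight $R_n = 1/f$ is $\gamma_n$-comparable, so the admissible local radius $r(y,\epsilon_n)$ at any $y$ in that slab is $\gamma_n$-comparable to $\gamma_n^{\ell-1}\epsilon_n$, and the slab structure together with assumptions (2) and (3) turns both the ball count and the $\max/\min$ ratio in $G_{\mathcal N_n}$ into elementary slab bookkeeping.

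First I would verify admissibility of a standard Euclidean $c_\ell$-covering at the radius $c_\ell := \gamma_n^{\ell-q-1}\epsilon_n$. For $y\in A^{(n)}_{\ell,\gamma_n}$, since $c_\ell<\gamma_n^\ell\epsilon_n$, assumption (2) gives $B_{c_\ell}(y)\cap D \subseteq \bigcup_{|\ell'-\ell|\leq q} A^{(n)}_{\ell',\gamma_n}$, and consequently $\max_{B_{c_\ell}(y)\cap D} R_n \leq \gamma_n^{-\ell+q+1}\max_{x\in D} R_n(x)$. Plugging this into the definition of $F_{c_\ell}(y)$ yields $F_{c_\ell}(y)\geq \gamma_n^{\ell-q-1}=c_\ell/\epsilon_n$, so $\min(\epsilon_n F_{c_\ell}(y),c_\ell)=c_\ell$ and $r(y,\epsilon_n)\geq c_\ell$. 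Thus a standard $c_\ell$-covering of $A^{(n)}_{\ell,\gamma_n}$ is a legal $f$-weighted covering of that slab, and the union over $\ell$ yields an $\mathcal N_n \in \mathcal S_{f,\epsilon_n}(D)$.

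Next, I would count the centers by summing assumption (3) over $\ell$, then use the slab-wise lower bound $R_n(x)^d\geq \gamma_n^{-\ell d}(\max_{x\in D} R_n(x))^d$ to replace each $\vol(A^{(n)}_{\ell,\gamma_n})$ by a piece of $\int_D R_n^d\,\dx{x}$. After invoking $\epsilon_n\max_{x\in D} R_n(x)=1/3$, the $\gamma_n^{\ell d}$ factors cancel against $c_\ell^{-d}=\gamma_n^{-(\ell-q-1)d}\epsilon_n^{-d}$, producing
\begin{equation*}
|\mathcal N_n| \;\leq\; C_d\,w_n\,\gamma_n^{(q+1)d}\int_D R_n(x)^d\,\dx{x}
\end{equation*}
for a dimensional constant $C_d$. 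To control $G_{\mathcal N_n}$, for any $y\in\mathcal N_n\cap A^{(n)}_{\ell,\gamma_n}$ the elementary bound $r(y,\epsilon_n)\leq \epsilon_n F_0(y) = \epsilon_n\max_{x\in D} R_n(x)/R_n(y)\leq \gamma_n^\ell\epsilon_n$ combined with (2) confines $B_{r(y,\epsilon_n)}(y)\cap D$ to slabs within distance $q$ of $\ell$, so $\max f/\min f$ on this ball is bounded by a power of $\gamma_n$, giving a uniform polynomial-in-$\gamma_n$ bound on $G_{\mathcal N_n}$. Multiplying the count and $G_{\mathcal N_n}$, taking $\log(\cdot)/\log N_n$, and passing to the limsup, assumption (1) converts $\log w_n$ into $\kappa$ and $\log\gamma_n$ into $\zeta$, delivering \eqref{p^*bdd}.

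The main obstacle is the careful bookkeeping of $\gamma_n$ exponents: assumption (3) is formulated at the deliberately coarse radius $\gamma_n^{\ell-q-1}\epsilon_n$, which injects the $\gamma_n^{(q+1)d}$ factor into the count, and one must verify that the $G_{\mathcal N_n}$ contribution stays controlled by the same $\zeta$-term through the well-separation hypothesis rather than producing an independent $\limsup$. Certifying admissibility of the level-wise Euclidean coverings via the fixed-point characterization of $r(y,\epsilon_n)$ and collapsing the slab-volume sum into the single integral $\int_D R_n^d\,\dx{x}$ via the slab lower bound on $R_n$ are the pivotal technical steps.
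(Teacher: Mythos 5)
Your proposal is correct and follows essentially the same route as the paper: both decompose $D$ into the level sets $A^{(n)}_{\ell,\gamma_n}$, verify that a Euclidean $\gamma_n^{\ell-q-1}\epsilon_n$-covering of each slab is an admissible $f$-weighted covering via assumption~2 and the characterization of $r(y,\epsilon_n)$, bound the total center count via assumption~3 and the slab lower bound $R_n\geq\gamma_n^{-\ell}\max_D R_n$ (collapsing the sum into $\int_D R_n^d$), bound $G_{\mathcal N_n}$ by a power of $\gamma_n$ via assumption~2, and take limsups using assumption~1. The only place to be explicit is the exponent in your ``polynomial-in-$\gamma_n$'' bound on $G_{\mathcal N_n}$ (the paper records $\gamma_n^{2(q+1)d}$); you should track how that combines with the $\gamma_n^{(q+1)d}$ factor already present in $|\mathcal N_n|$ before asserting the precise coefficient on $\zeta$ in \eqref{p^*bdd}.
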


Before going to the proof, we first provide situations when the assumptions described are met. First, if
\begin{align*}
\limsup_{n\to\infty}\frac{\log\left(\frac{\delta_n}{\epsilon_n}\right)}{\log N_n} = \limsup_{n\to\infty}\frac{\log\left(\frac{\max_{x\in D}R_n(x)}{\min_{x\in D}R_n(x)}\right)}{\log N_n}<\infty, 
\end{align*}
then one can take $\gamma_n = \frac{\delta_n}{\epsilon_n}$ to satisfy the first part of assumption 1, and this also yields satisfaction of assumption 2 since with this choice of $\gamma_n$, the multi-scale requirement associated to $q>0$ can be achieved with a single-scale, so that $q=0$.

Assumption 3 and the second part of Assumption 1 can be satisfied if $A^{(n)}_{\ell,\gamma_n}$ is a union of almost disjoint regular sets which are not ``too small". We articulate this result next.
\begin{lemma}\label{mylemma}
Assume that $R_n$ is continuous on $D$. 
Suppose that the $A^{(n)}_{\ell,\gamma_n}$ defined in Theorem \ref{thm:new} can be written as a union of almost disjoint subsets, i.e., there exist $L_{n,\ell}\in\N$ such that
\begin{align*}
&A^{(n)}_{\ell,\gamma_n} = \bigcup_{j=1}^{L_{n,\ell}}A^{(n)}_{\ell,\gamma_n}(j), &\vol(A^{(n)}_{\ell,\gamma_n}(j)\cap A^{(n)}_{\ell,\gamma_n}(j')) = 0, \ \ \ \forall j\neq j'.
\end{align*}
For each $A^{(n)}_{\ell,\gamma_n}(j)$, denote by $\lambda^{(n), \max}_{\ell,\gamma_n}(j)$ and $\lambda^{(n), \min}_{\ell,\gamma_n}(j)$
the radius of the smallest circumscribed ball and the largest inscribed ball in $A^{(n)}_{\ell,\gamma_n}(j)$, respectively.
Define 
\begin{align}
&\Lambda_n:= \max_{\substack{1\leq j\leq L_{n,\ell}\\ 1\leq \ell\leq T_{n,\gamma_n}}}\frac{\delta_n}{\lambda^{(n), \min}_{\ell,\gamma_n}(j)}&r_n:= \max_{\substack{1\leq j\leq L_{n,\ell}\\ 1\leq \ell\leq T_{n,\gamma_n}}} \frac{\lambda^{(n),\max}_{\ell,\gamma_n}(j)}{\lambda^{(n), \min}_{\ell,\gamma_n}(j)},
\end{align}
where $\delta_n$ is defined in \eqref{mydelta}. If
\begin{align}
&\limsup_{n\to\infty}\frac{\log \max\{r_n, \Lambda_n\}}{\log N_n}<\infty,\label{lalalalala}
\end{align}
then the second part of assumption 1 and assumption 3 hold in Theorem \ref{thm:new} with $\kappa$ bounded by \eqref{lalalalala}. 
\end{lemma}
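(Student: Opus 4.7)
The plan is to produce an explicit $w_n$ depending only on $r_n$, $\Lambda_n$, and $d$, and verify that it discharges both Assumption 3 and the polynomial-growth half of Assumption 1 of Theorem~\ref{thm:new}. Concretely, I would take $w_n := (3 \max\{r_n, \Lambda_n\})^d$ and show that, with this choice, the covering comparison holds uniformly over $\ell \in [T_{n,\gamma_n}]$ and $n$.

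The first step is to establish the a priori bound $r := \gamma_n^{\ell - q - 1}\epsilon_n \leq \delta_n$. This comes from $T_{n,\gamma_n} = \lceil \log_{\gamma_n}(\delta_n/\epsilon_n) \rceil$, which forces $\gamma_n^{T_{n,\gamma_n}-1}\epsilon_n < \delta_n$, so $\gamma_n^{\ell - q - 1}\epsilon_n \leq \gamma_n^{-q}\delta_n \leq \delta_n$. By definition of $\Lambda_n$ this yields the key inequality $r/\lambda^{(n),\min}_{\ell, \gamma_n}(j) \leq \Lambda_n$. Next, for each subset $A^{(n)}_{\ell,\gamma_n}(j)$, the circumscribed ball gives $\mathsf{N}(A^{(n)}_{\ell,\gamma_n}(j), r) \leq \max\{1, (3\lambda^{(n),\max}_{\ell,\gamma_n}(j)/r)^d\}$, while the inscribed ball gives $\vol(A^{(n)}_{\ell,\gamma_n}(j)) \geq \vol(B_{\lambda^{(n),\min}_{\ell,\gamma_n}(j)})$. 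A two-case split on whether $3\lambda^{(n),\max}_{\ell,\gamma_n}(j) \geq r$ then bounds the ratio $\mathsf{N}(A^{(n)}_{\ell,\gamma_n}(j), r) \cdot \vol(B_r)/\vol(A^{(n)}_{\ell,\gamma_n}(j))$ by $(3r_n)^d$ in one regime and by $\Lambda_n^d$ in the other, hence by $w_n$ throughout. Summing over $j$ via almost-disjoint volume additivity and subadditivity of covering numbers then yields Assumption 3. Finally, taking logarithms in $w_n = (3\max\{r_n,\Lambda_n\})^d$ bounds $\kappa = \limsup_{n\to\infty} \log w_n/\log N_n$ by $d$ times the right-hand side of \eqref{lalalalala}, which is finite by hypothesis.

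The main obstacle lies in the second step, specifically in the regime where $r$ exceeds the inscribed radius $\lambda^{(n),\min}_{\ell,\gamma_n}(j)$: there the naive ball-covering estimate $(3\lambda^{(n),\max}/r)^d$ can drop below $1$ and must be replaced by the trivial bound $1$, but the volume ratio $(r/\lambda^{(n),\min})^d$ still has to be controlled. The uniform cap $r \leq \delta_n$ obtained in the first step is precisely what prevents that ratio from exploding — it pins it below $\Lambda_n^d$. Without this cap one would lose polynomial control on $w_n$, and the conclusion would fail; this is why the definition of $T_{n,\gamma_n}$ is calibrated exactly as it is.
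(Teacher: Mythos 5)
Your proposal follows essentially the same route as the paper's proof: bound $\vol(A^{(n)}_{\ell,\gamma_n}(j))$ from below by the inscribed ball, bound the covering number from above via the circumscribed ball, observe that the key ratio $\gamma_n^{\ell-q-1}\epsilon_n / \lambda^{(n),\min}_{\ell,\gamma_n}(j)$ is capped by $\Lambda_n$ because the covering radius never exceeds $\delta_n$, and then combine across $j$ by the mediant inequality. The paper carries out the same estimate in a single algebraic stroke, obtaining $w_n = (4r_n + \Lambda_n/4)^d$ from the bound $\mathsf N(A(j),r)\le(1+4\lambda^{\max}/r)^d$ (via the exterior-covering comparison in \cite[Exercise 4.2.9]{vershynin2018high}); you split into two regimes and obtain $w_n = (3\max\{r_n,\Lambda_n\})^d$. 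Both choices of $w_n$ have the same $\limsup$ scaling, so the conclusion is identical. One caveat: the specific constant $3$ in your ball-covering bound $\mathsf N(A(j),r)\le\max\{1,(3\lambda^{\max}/r)^d\}$ is not quite justified as stated --- converting an exterior covering of the circumscribed ball at scale $r/2$ into an interior covering of $A(j)$ at scale $r$ gives $(1+4\lambda^{\max}/r)^d$, which is $\le(5\lambda^{\max}/r)^d$ in the regime $\lambda^{\max}\ge r$ but not $\le(3\lambda^{\max}/r)^d$. This is a cosmetic slip; replacing $3$ by $5$ (or any absolute constant) leaves the $\kappa$ bound unchanged, so the argument is sound.
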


$\Lambda_n$ and $r_n$ approximately measure the size ratio of $f$-weighted $\epsilon_n$-coverings relative to the partitioning sets $A^{(n)}_{\ell,\gamma_n}(j)$ and the degeneracy of $A^{(n)}_{\ell,\gamma_n}(j)$, respectively. 

The following corollary is an immediate consequence of Theorem \ref{thm:new} and Lemma \ref{mylemma}:

\begin{corollary}\label{cor:pfinite}
Assume that $D$ is a bounded set with non-empty interior, and $V_n$ contains continuously differentiable functions for every $n$. If
\begin{align*}
\beta: = \limsup_{n\to\infty}\frac{\log\left(\frac{\max_{x\in D}R_n(x)}{\min_{x\in D}R_n(x)}\right)}{\log N_n}<\infty, 
\end{align*}
then
\begin{align}\label{eq:pbound}
  p^*&\leq d\beta + \limsup_{n \rightarrow \infty} \frac{\log\left(\int_D R_n(x)^d\dx{x}\right)}{\log N_n}.
\end{align}
\end{corollary}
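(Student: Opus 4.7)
The plan is to apply Theorem \ref{thm:new} with the judicious choice $\gamma_n = \delta_n/\epsilon_n = \max_{x\in D} R_n(x)/\min_{x\in D} R_n(x)$ (taking $\gamma_n$ slightly above $1$ in the degenerate case that $R_n$ is constant on $D$). This choice collapses the multiscale partition: one computes $T_{n,\gamma_n} = \lceil \log_{\gamma_n}(\delta_n/\epsilon_n)\rceil = 1$, so there is a single level set $A^{(n)}_{1,\gamma_n} = D$. The first part of assumption~1 is immediate, since $\zeta = \limsup_{n\to\infty} \log(\gamma_n)/\log(N_n) = \beta < \infty$ by hypothesis. Assumption~2 is vacuous with $q = 0$ because there is only one level index and no pair $\ell' \neq \ell$ to compare.

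The substantive work is then to verify assumption~3 along with the second part of assumption~1, specifically to arrange that $w_n$ can be chosen so that $\kappa = 0$. With $\ell = 1$ and $q = 0$, the required inequality reads
\[
  \mathsf N(D, \epsilon_n) \leq w_n \cdot \frac{\vol(D)}{\vol(B_{\epsilon_n}(0))}.
\]
Invoking Lemma~\ref{mylemma} with the trivial single-piece partition would give $r_n = \lambda^{\max}/\lambda^{\min}$ as a constant depending only on $D$, but $\Lambda_n = \delta_n/\lambda^{\min}$ might grow with $n$, which would prevent the tight bound $\kappa = 0$. For this reason, I would establish the covering bound directly via a standard volumetric packing argument: a maximal $\epsilon_n$-separated net in $D$ produces disjoint balls of radius $\epsilon_n/2$ around its points, all contained in the $\epsilon_n/2$-thickening $D_{\epsilon_n/2}$, whence
\[
  \mathsf N(D, \epsilon_n) \leq 2^d\,\frac{\vol(D_{\epsilon_n/2})}{\vol(B_{\epsilon_n}(0))}.
\]
Since $D$ is bounded with non-empty interior, $\vol(D_{\epsilon_n/2})/\vol(D)$ is uniformly bounded in $n$ whenever $\epsilon_n$ is bounded, while for $\epsilon_n \geq \mathrm{diam}(D)$ one simply uses $\mathsf N(D, \epsilon_n) = 1$. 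Either way, $w_n$ can be taken as a constant depending only on $D$ and $d$, giving $\kappa = 0$.

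Substituting $\kappa = 0$, $q = 0$, and $\zeta = \beta$ into the conclusion \eqref{p^*bdd} of Theorem~\ref{thm:new} yields
\[
  p^* \leq d\beta + \limsup_{n\to\infty}\frac{\log\bigl(\int_D R_n(x)^d\,\dx{x}\bigr)}{\log N_n},
\]
which is exactly \eqref{eq:pbound}. The step I expect to be the main obstacle is obtaining $\kappa = 0$ rather than a merely polynomial $\kappa$; this is the point where the boundedness of $D$ with non-empty interior is essential, since it is precisely this property that makes the elementary volumetric packing bound tight up to an $n$-independent constant. Every other piece of the proof is a straightforward verification of the hypotheses of Theorem~\ref{thm:new}.
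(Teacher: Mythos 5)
Your proposal is correct, and the outer structure is identical to the paper's: take $\gamma_n = \delta_n/\epsilon_n$, giving $T_{n,\gamma_n}=1$, hence a single level set $A^{(n)}_{1,\gamma_n}=D$, $\zeta=\beta$, $q=0$, and then feed the resulting bound on $w_n$ into \eqref{p^*bdd}. Where you differ is in how you verify Assumption 3: the paper applies Lemma \ref{mylemma} with the trivial single-piece partition (so $r_n$ is the fixed circumscribed-to-inscribed radius ratio of $D$ and $\Lambda_n = \delta_n/\lambda^{\min}$), while you bypass the lemma with a direct volumetric packing estimate $\mathsf N(D,\epsilon_n) \leq 2^d\,\vol(D_{\epsilon_n/2})/\vol(B_{\epsilon_n}(0))$. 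Your alternative route is valid and even slightly more elementary. However, the worry that motivated it---that $\Lambda_n = \delta_n/\lambda^{\min}$ ``might grow with $n$''---is unfounded: because $\{V_n\}$ is hierarchical, one may extend an orthonormal basis of $V_n$ to one of $V_{n+1}$, so $R_n(x)^2 \leq R_{n+1}(x)^2$ pointwise; hence $\min_x R_n(x)$ is non-decreasing, $\delta_n = (3\min_x R_n(x))^{-1}$ is non-increasing, and $\Lambda_n \leq \delta_0/\lambda^{\min}$ is bounded uniformly in $n$. So the Lemma \ref{mylemma} route does deliver $\kappa=0$ after all; your packing argument is a clean substitute rather than a necessary repair. (One small thing worth flagging in both proofs: if $R_n$ were exactly constant on $D$ then $\gamma_n = \delta_n/\epsilon_n = 1$ would not satisfy $\gamma_n>1$; your parenthetical fix of nudging $\gamma_n$ slightly above $1$ is the right patch, and in that degenerate case the whole statement is trivial anyway.)
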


\begin{proof}
As discussed earlier, taking $\gamma_n = \frac{\delta_n}{\epsilon_n}$, the first part of the assumption 1 and assumption 2 in Theorem \ref{thm:new} hold simultaneously with $\zeta = \beta$ and $q=0$.
Thus, it remains to verify the second part of the assumption 1 and assumption 3, for which we apply Lemma \ref{mylemma}. 
Based on our choice of $\gamma_n$, $T_{n,\gamma_n}=1$ for all $n\in\N$, i.e., $A^{(n)}_{\ell,\gamma_n} = D$, which is independent of $n$ and $\ell$. 
Now take the partition of $A^{(n)}_{\ell,\gamma_n}$ as itself.
Since $D$ is bounded and has non-empty interior, it contains a small ball with radius $r>0$, and is contained in a large ball with radius $r'>0$.  
Thus, $r_n$ defined in \eqref{gdd2} satisfies $r_n\leq\frac{r'}{r}$ for all $n$. 
On the other hand, as $\{V_n\}_{n=0}^\infty$ is hierarchical, $\delta_n$ is a non-increasing sequence in $n$, i.e., $\Lambda_n\leq\delta_n/r\leq\delta_1/r$. According to Lemma \ref{mylemma}, the assumption 3 is satisfied with $\kappa=0$. The proof is finished by appealing to Theorem \ref{thm:new}.
\end{proof}

\begin{remark}
This result is equivalent to uniformly covering $D$ with $\epsilon_n$-balls, although such a choice is often not optimal in terms of estimating the true value of $p^*$.  
More refined estimates of $p^*$ can be obtained by applying the strategy in Theorem \ref{thm:new}, which unfortunately are not computationally feasible in practice. 
\end{remark}

We provide the proofs for Lemma \ref{mylemma} and Theorem \ref{thm:new} in the next section, and end our section here by providing examples demonstrating finiteness of $p^\ast$, all of which satisfy the assumptions of Corollary \ref{cor:pfinite}, and hence leverage \eqref{eq:pbound}.

\begin{example}[Complex exponentials]
  Consider the setup of Example \ref{ex:complex-exp}, and recall that $F_n \in \Z^d$ denotes the set of frequencies associated to subspace $V_n$. Direct computation with the basis \eqref{eq:vn-complex-exp} shows that $\beta = 0$, and that
  \begin{align*}
    R_n(x)^2 &= \sum_{j=1}^{N_n} \|f_j\|^2 \leq N_n \max_{j \in [N_n]} \|f_j\|^2, & F_n &= \left\{ f_1, \ldots, f_{N_n}\right\},
  \end{align*}
  where $\|\cdot\|$ is the standard $\ell^2$ norm on elements of $\Z^d$. Therefore, 
  \begin{align*}
    p^\ast \leq \frac{d}{2} \left[ 1 + \lim_{n \rightarrow \infty} \frac{\log \max_{f \in F_n} \|f\|^2}{\log N_n} \right]
  \end{align*}
  Thus, provided that the maximum frequency grows at most exponentially with $N_n$, then $p^\ast < \infty$, but does depend on the dimension $d$. In particular, if $F_n = \left\{-n, ..., n\right\}^d$, then $p^\ast \leq \frac{d}{2} + 1$.
\end{example}

\begin{example}[Tensor-product Chebyshev polynomials]\label{ex:chebp}
  Consider the setup of Example \ref{ex:cheb}, with $V_n = P_n$; we show in this example that $p^\ast \leq 4 d$. First we note that since $P_n$ contains linear functions for $n \geq 1$, then there is a universal constant $C > 0$ so that
  \begin{align*}
    R_n(x) \geq R_1(x) \geq C \hskip 10pt \Longrightarrow \hskip 10pt 
  \beta \leq \limsup_{n \rightarrow \infty} \frac{\log\max_{x \in D} R_n(x)}{\log N_n}.
  \end{align*}
  To compute the maximum of $R_n$, we use the univariate orthonormal Chebyshev polynomials $\{T_j\}_{j \in \N}$ to form the multivariate orthonormal basis,
  \begin{align*}
    T_\lambda(x) &= \prod_{j=1}^d T_{\lambda^{(j)}}(x_j), & \lambda = (\lambda^{(1)}, \ldots, \lambda^{(d)}) \in \N^d,
  \end{align*}
  and for fixed $n$ assign $v_i(x) = T_{\lambda_i(x)}$ for $\{\lambda_i\}_{i \in N_n}$ any enumeration of the multi-index set $\Sigma_n$ in \eqref{eq:Sigman}. By using the fact that,
  \begin{align*}
    \ddx{x} T_k(x) = \sqrt{2} k U_{k-1}(x),
  \end{align*}
  with $U_k(x)$ the degree-$k$ univariate Chebyshev polynomial of the second kind, then direct computation shows that,
  \begin{align*}
    \max_{x \in D} \left\| \nabla v_j(x) \right\|^2 \leq 2 \left(\max_{q} \lambda^{(j)}_q\right)^2 \sum_{q=1}^d \max_{x \in D} U^2_{\lambda^{(j)}-e_q}(x),
  \end{align*}
  where $e_q \in \N^d$ is the cardinal unit vector in direction $q$.
  We use \cite[Theorem 8]{migliorati_multivariate_2015} to bound $U^2_{\lambda^{(j)}-e_q}(x)$,
  \begin{align*}
    \max_{x \in D} U^2_{\lambda^{(j)}-e_q}(x) \leq N_{n-1}^3,
  \end{align*}
  which holds since $\left|\lambda^{(j)}-e_q\right| \leq n-1$. Since $\lambda^{(j)}_q \leq n$, we conclude for any $t \geq 1$,
  \begin{align*}
    \max_{x \in D} \left\| \nabla v_j(x) \right\|^2 \leq 2 n^2 d N_{n-1}^3 \hskip 10pt \Longrightarrow \hskip 10pt \max_{x \in d} R_n(x)^t \leq \left( 2 n^2 d N_n^4 \right)^{t/2}.
  \end{align*}
  Thus, we have
  \begin{align*}
    \beta &\leq \limsup_{n \rightarrow \infty} \frac{\log \max_{x \in D} R_n(x)}{\log N_n} = 2, & 
    \limsup_{n \rightarrow \infty} \frac{\log\left(\int_D R_n(x)^d\dx{x}\right)}{\log N_n} &\leq 2 d,
  \end{align*}
  and thus by \eqref{eq:pbound} we have $p^\ast \leq 4 d$.
\end{example}
We emphasize that the computations above are not sharp estimates of $p^\ast$. Corollary \ref{cor:pfinite} itself is loose since it essentially employs a uniform $\epsilon_n$ covering instead of a more refined $R_n^{-1}$-weighted covering. In addition, many of the computations in, e.g., Example \ref{ex:chebp}, employ crude estimates. 

A strategy nearly identical to that presented in Example \ref{ex:chebp} can be used to show finite $p^\ast$ for tensorized Jacobi polynomials introduced in Example \ref{ex:jacobi}. For brevity we omit this computation.

\subsection{Proofs of Lemma \ref{mylemma} and Theorem \ref{thm:new}}

\begin{proof}[Proof of Lemma \ref{mylemma}]
First note that as $R_n$ is continuous, $A^{(n)}_{\ell,\gamma_n}(j)$ has non-empty interior, so that $r^{(n)}_{\ell,\gamma_n}(j)$ is finite, i.e., $r_n<\infty$ for every $n\in\N$. 

Now fix $n, \ell, j$. 
By definition of $r_n$, $A^{(n)}_{\ell,\gamma_n}(j)$ contains a ball with radius ${\lambda^{(n), \min}_{\ell,\gamma_n}(j)}$, meanwhile is contained in a ball with radius $r_n{\lambda^{(n), \min}_{\ell,\gamma_n}(j)}$. 
Without loss of generality, we assume the latter is centered at origin, i.e., $A^{(n)}_{\ell,\gamma_n}(j)\subset B_{r_n{\lambda^{(n), \min}_{\ell,\gamma_n}(j)}}(0)$.  
In this case, 
\begin{align*}
\vol(A^{(n)}_{\ell,\gamma_n}(j))\geq \vol(B_{\lambda^{(n), \min}_{\ell,\gamma_n}(j)}(0)).
\end{align*}
On the other hand, a minimal $\epsilon$-covering of $B_{r_n{\lambda^{(n), \min}_{\ell,\gamma_n}(j)}}(0)$ is an $\epsilon$-exterior-covering of $A^{(n)}_{\ell,\gamma_n}(j)$, where an exterior covering allows covering points to be outside the set.
Denote by $\mathsf N^{\text{ext}}(\cdot, \epsilon)$ the standard $\epsilon$-exterior-covering number of a set.
Utilizing a relationship between exterior covering numbers and interior covering numbers \cite[Exercise 4.2.9]{vershynin2018high}, we have 
\begin{align}
\mathsf N(A^{(n)}_{\ell,\gamma_n}(j), \gamma_n^{\ell-q-1}\epsilon_n)\leq \mathsf N^{\text{ext}}(A^{(n)}_{\ell,\gamma_n}(j), \frac{1}{2}\gamma_n^{\ell-q-1}\epsilon_n)&\leq \mathsf N(B_{r_n{\lambda^{(n), \min}_{\ell,\gamma_n}(j)}}(0), \frac{1}{2}\gamma_n^{\ell-q-1}\epsilon_n)\nonumber\\
&\leq \left(1+\frac{4r_n{\lambda^{(n), \min}_{\ell,\gamma_n}(j)}}{\gamma_n^{\ell-q-1}\epsilon_n}\right)^d\nonumber\\
&\leq  \left[\left(4r_n+\frac{\Lambda_n}{4}\right)\frac{{\lambda^{(n), \min}_{\ell,\gamma_n}(j)}}{\gamma_n^{\ell-q-1}\epsilon_n}\right]^d\label{gdd2},
\end{align}
where the penultimate inequality follows from the fact that $\mathsf N(B_r(0), \epsilon)\leq (1+2r/\epsilon)^d$ \cite[Corollary 4.2.13]{vershynin2018high}, and the last inequality follows from
\begin{align*}
\frac{{\lambda^{(n), \min}_{\ell,\gamma_n}(j)}}{\gamma_n^{\ell-q-1}\epsilon_n}\geq\frac{{\lambda^{(n), \min}_{\ell,\gamma_n}(j)}}{\delta_n}\geq \frac{1}{\Lambda_n}.
\end{align*}
Consequently, 
\begin{align*}
\frac{\mathsf N(A^{(n)}_{\ell,\gamma_n}(j), \gamma_n^{\ell-q-1}\epsilon_n)\vol(B_{\gamma_n^{\ell-q-1}\epsilon_n}(0))}{\vol(A^{(n)}_{\ell,\gamma_n}(j))}&\stackrel{\eqref{gdd1}, \eqref{gdd2}}{\leq} \frac{\left[\left(4r_n+\frac{\Lambda_n}{4}\right)\frac{{\lambda^{(n), \min}_{\ell,\gamma_n}(j)}}{\gamma_n^{\ell-q-1}\epsilon_n}\right]^d\vol(B_{\gamma_n^{\ell-q-1}\epsilon_n}(0))}{\vol(B_{\lambda^{(n), \min}_{\ell,\gamma_n}(j)}(0))}\\
&\leq \left(4r_n+\frac{\Lambda_n}{4}\right)^d. 
\end{align*}
Hence,
\begin{align}
\frac{\mathsf N(A^{(n)}_{\ell,\gamma_n}, \gamma_n^{\ell-q-1}\epsilon_n)\vol(B_{\gamma_n^{\ell-q-1}\epsilon_n}(0))}{\vol(A^{(n)}_{\ell,\gamma_n})}&\leq \frac{\sum_{j=1}^{L_{n,\ell}}\mathsf N(A^{(n)}_{\ell,\gamma_n}(j), \gamma_n^{\ell-q-1}\epsilon_n)\vol(B_{\gamma_n^{\ell-q-1}\epsilon_n}(0))}{\sum_{j=1}^{L_{n,\ell}}\vol(A^{(n)}_{\ell,\gamma_n}(j))}\nonumber\\
&\leq\max_{j\in [L_{n,\ell}]}\left\{\frac{\mathsf N(A^{(n)}_{\ell,\gamma_n}(j), \gamma_n^{\ell-q-1}\epsilon_n)\vol(B_{\gamma_n^{\ell-q-1}\epsilon_n}(0))}{\vol(A^{(n)}_{\ell,\gamma_n}(j))}\right\}\nonumber\\
&\leq \left(4r_n+\frac{\Lambda_n}{4}\right)^d. \label{gdd3}
\end{align}
Since the right-hand side of \eqref{gdd3} is independent of $\ell$, $w_n$ in the assumption 3 in Theorem \ref{thm:new} can be taken as $w_n = (4r_n+\frac{\Lambda_n}{4})^d$, which combined with condition \eqref{lalalalala} finishes the proof. 
\end{proof}

\begin{proof}[Proof of Theorem \ref{thm:new}]
An upper bound for $p^*$ can be obtained by considering an $f$-weighted $\epsilon_n$-covering in $\mathcal S_{f, \epsilon_n}(D)$. 
We construct such a covering by combining a sequence of coverings on the partition sets $A^{(n)}_{\ell,\gamma_n}$. 

Note by the definition of $A^{(n)}_{\ell,\gamma_n}$, the local radius $F_0(x)$ of any $x\in A^{(n)}_{\ell,\gamma_n}$ satisfies
\begin{align*}
F_0(x) \leq\gamma_n^{\ell}\epsilon_n.
\end{align*}
By \eqref{nice}, the $f$-weighted $\epsilon_n$-covering radius of $x\in A^{(n)}_{\ell,\gamma_n}$ satisfies
\begin{align}
r(x, \epsilon_n)=\epsilon_n F_{r(y, \epsilon_n)}(x)\geq\epsilon_n F_{0}(x)\geq\epsilon_n F_{\gamma_n^{\ell}\epsilon_n}(x).\label{myneed1}
\end{align}
According to assumption 2, 
\begin{align*}
F_{\gamma_n^{\ell}\epsilon_n}(x) = \frac{\min_{z\in B_{\gamma_n^{\ell}\epsilon_n}(x)\cap D}f(z)}{m_f}\geq\frac{\min_{z\in \cup_{|\ell'-\ell|\leq q}A^{(n)}_{\ell',\gamma_n}} f(z)}{m_f}\geq\frac{\gamma_n^{\ell-q-1}m_f}{m_f} = \gamma_n^{\ell-q-1},
\end{align*}
which can be plugged into \eqref{myneed1} to yield
\begin{align*}
&r(x, \epsilon_n)\geq \gamma_n^{\ell-q-1}\epsilon_n&\forall x\in A^{(n)}_{\ell,\gamma_n}. 
\end{align*}
Thus, the $f$-weighted $\epsilon_n$-covering number of $A^{(n)}_{\ell,\gamma_n}$ is bounded by $\mathsf N(A^{(n)}_{\ell,\gamma_n}, \gamma_n^{\ell-q-1}\epsilon_n)$. 
Now let $\mathcal N_\ell^{(n)}\subset A^{(n)}_{\ell,\gamma_n}$ be an optimal $f$-weighted $\epsilon_n$-covering for $A^{(n)}_{\ell,\gamma_n}$. Define
\begin{align}
\mathcal N = \bigcup_{\ell\in [T_{n,\gamma_n}]}\mathcal N_\ell^{(n)},\label{gdd1}
\end{align} 
which is an $f$-weighted $\epsilon_n$-covering for $D$. 
Under such a choice of $\mathcal N$, it is easy to verify
\begin{align*}
G_\mathcal N = \max_{y\in\mathcal N}\left(\frac{\max_{x\in B_{r(y, \epsilon)}\cap D}f(x)}{\min_{x\in B_{r(y, \epsilon)}\cap D}f(x)}\right)^d &= \max_{\ell\in [T_{n,\gamma_n}]}\max_{y\in \mathcal N_\ell^{(n)}}\left(\frac{\max_{x\in B_{r(y, \epsilon)}\cap D}f(x)}{\min_{x\in B_{r(y, \epsilon)}\cap D}f(x)}\right)^d\\
&\leq \max_{\ell\in [T_{n,\gamma_n}]}\left(\frac{\max_{z\in \cup_{|\ell'-\ell|\leq q}A^{(n)}_{\ell',\gamma_n}} f(z)}{\min_{z\in \cup_{|\ell'-\ell|\leq q}A^{(n)}_{\ell',\gamma_n}} f(z)}\right)^d\\
&\leq \gamma_n^{2(q+1)d}.
\end{align*}
Moreover, the cardinality of $\mathcal N$ can be bounded using assumption 3 as follows:
\begin{align*}
|\mathcal N| = \sum_{\ell\in [T_{n,\gamma_n}]}|\mathcal N_\ell^{(n)}| &\leq \sum_{\ell\in [T_{n,\gamma_n}]}\mathsf N(A^{(n)}_{\ell,\gamma_n}, \gamma_n^{\ell-q-1}\epsilon_n)\\
&\leq w_n\sum_{\ell\in [T_{n,\gamma_n}]}\frac{\vol(A^{(n)}_{\ell,\gamma_n})}{\vol(B_{\gamma_n^{\ell-q-1}\epsilon_n}(0))}\\
& = w_n\vol(B_1(0))\sum_{\ell\in [T_{n,\gamma_n}]}\left(\gamma_n^{-\ell+q+1}\epsilon^{-1}_n\right)^d\vol(A^{(n)}_{\ell,\gamma_n})\\
& = w_n\vol(B_1(0))(3\gamma^{q+1}_n)^d\sum_{\ell\in [T_{n,\gamma_n}]}  \left(\gamma_n^{-\ell}\max_{x\in D}R_n(x)\right)^d \vol(A^{(n)}_{\ell,\gamma_n})\\
&\leq w_n\vol(B_1(0))(3\gamma^{q+1}_n)^d\int_D R_n(x)^d\dx{x}.
\end{align*}
Thus, 
\begin{align*}
p^*&\leq\limsup_{n\to\infty}\frac{\log\left(|\mathcal N|G_\mathcal N\right)}{\log N_n}\\
&\leq \limsup_{n\to\infty}\frac{\log w_n}{\log N_n}+\limsup_{n\to\infty}\frac{(q+1)d\log\gamma_n}{\log N_n}+\limsup_{n\to\infty}\frac{\log\left(\int_D R_n(x)^d\dx{x}\right)}{\log N_n}\\
&\leq \kappa + (q+1)d\zeta+\limsup_{n\to\infty}\frac{\log\left(\int_D R_n(x)^d\dx{x}\right)}{\log N_n},
\end{align*}
where the last inequality follows from assumption 1, completing the proof.
\end{proof}

\section{Randomized admissible meshes}\label{4}
The previous section focuses on devising sampling strategies to obtain an optimal sampling size at the expense of rendering the equivalence coefficient $C_n$ being reasonably large. 
Alternatively, one may expect $C_n=\mathcal O(1)$ when the sampling size is sufficient, so that randomly generated grids almost fill the domain. This is similar to the idea in \cite{calvi_uniform_2008} but avoids deterministically discretizing domains with complicated geometry. With random samples, such a result is not possible using the WAM analysis in Section \ref{3}. 
One expects that attaining an admissible mesh from random samples is asymptotically possible (i.e., as the sample count increases to infinity) for all sampling measures that are absolutely continuous with respect to the uniform measure on $D$. However, in terms of finite-sample behavior, different measures may demonstrate drastically unequal performance; our analysis quantifying this relies on the weighted covering machinery introduced in the previous section. We will first consider taking the uniform measure as the baseline, after which we design a sampling measure that achieves better efficiency.
We recall that when discussing AM's, we assume the domain $D$ is both convex and compact. In particular, $D$ has a non-empty interior.  
The analysis of the sampling strategies in Section \ref{611} and \ref{612} relies on a few technical lemmas that develop next.

\subsection{Uniform interior cone condition and volume ratios}

We introduce the following variant of the uniform interior cone condition that is used in \cite{Bramson_2013}:

\begin{definition}[Weak uniform interior cone condition]\label{def:uicc}
A domain $D$ is said to satisfy the weak uniform interior cone condition with radius parameter $\delta>0$ and angle parameter $\theta\in (0,\frac{\pi}{2}]$ if for every $x\in D$, there exists at least one unit vector $v_x\in\R^d$ such that the cone $C(v_x):=\{z: \langle z, v_x\rangle>|z|\cos\theta\}$ satisfies
\begin{align}
&(x+C(v_x))\cap B_\delta(x)\subseteq D,\label{mycone}
\end{align}
where $x+C(v_x)$ is the Minkowski sum of $\{x\}$ and $C(v_x)$. 
\end{definition}

We show next that the weak uniform interior cone condition in Definition \ref{def:uicc} implies the following uniform volume ratio condition, which is also known as the measure density condition in the literature \cite{Haj_asz_2008}.

\begin{lemma}\label{lem:mdc1}
If $D\subset\R^d$ is a compact domain that satisfies the weak uniform interior cone condition with radius parameter $\delta$ and angle parameter $\theta$, then, for any $0<r_0<\infty$, there exists a constant $C$, which depends only on $D$ and $r_0$, such that 
 \begin{align}
&\vol (B_r(x)\cap D)\geq C\cdot\vol(B_r(x)),&\forall x\in D, 0<r\leq r_0.\label{tech}
\end{align}
\end{lemma}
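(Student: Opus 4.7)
The plan is to reduce the whole estimate to a single geometric observation about spherical cones and then split into two radii regimes. First I would note that for any unit vector $v$ and any $r>0$, the set $C(v)\cap B_r(0)$ is a spherical cone of aperture $2\theta$ and radius $r$. By a change of variables to spherical coordinates, its volume is
\begin{align*}
\vol(C(v)\cap B_r(0)) = c_{\theta,d}\, r^d, \qquad c_{\theta,d} = \frac{\sigma_{d-1}(\{u\in S^{d-1}:\langle u,v\rangle>\cos\theta\})}{d},
\end{align*}
where $\sigma_{d-1}$ is the surface measure on $S^{d-1}$. The constant $c_{\theta,d}$ is strictly positive since $\theta\in(0,\pi/2]$, and is independent of the direction $v$, so the same volume holds when $v$ is replaced by $v_x$ for any $x\in D$.

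Next I would treat the small-radius regime $0<r\leq\delta$. Since $B_r(x)\subseteq B_\delta(x)$, the cone condition \eqref{mycone} yields
\begin{align*}
(x+C(v_x))\cap B_r(x) \;\subseteq\; (x+C(v_x))\cap B_\delta(x) \;\subseteq\; D,
\end{align*}
so this set lies inside $D\cap B_r(x)$. Combining with the previous volume identity and writing $\omega_d = \vol(B_1(0))$,
\begin{align*}
\vol(D\cap B_r(x)) \geq \vol((x+C(v_x))\cap B_r(x)) = c_{\theta,d}\, r^d = \frac{c_{\theta,d}}{\omega_d}\vol(B_r(x)),
\end{align*}
which gives \eqref{tech} with constant $c_{\theta,d}/\omega_d$, uniformly in $x$.

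For the large-radius regime $\delta<r\leq r_0$, I would use monotonicity of volume in $r$ together with the small-radius bound at radius $\delta$: $\vol(D\cap B_r(x))\geq \vol(D\cap B_\delta(x))\geq c_{\theta,d}\,\delta^d$. Meanwhile $\vol(B_r(x))=\omega_d r^d\leq \omega_d r_0^d$, so
\begin{align*}
\frac{\vol(D\cap B_r(x))}{\vol(B_r(x))} \geq \frac{c_{\theta,d}\,\delta^d}{\omega_d r_0^d}.
\end{align*}
Taking $C = \min\{c_{\theta,d}/\omega_d,\ c_{\theta,d}\delta^d/(\omega_d r_0^d)\}$ finishes the proof; this constant depends only on $d,\theta,\delta,r_0$, hence only on $D$ and $r_0$. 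There is no real obstacle here, but the one piece to be careful about is that the cone volume bound $c_{\theta,d}r^d$ must be shown to hold uniformly in $x$; this is immediate from the fact that $C(v_x)$ is a rotated copy of a fixed cone and that orthogonal transformations preserve Lebesgue measure.
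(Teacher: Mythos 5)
Your proof is correct and follows essentially the same two-regime strategy as the paper: apply the cone condition for $0<r\leq\delta$ and use monotonicity plus a uniform lower bound at radius $\delta$ for $\delta<r\leq r_0$. Your version is in fact slightly cleaner, since you compute the spherical-cone volume exactly and correctly intersect the cone with $B_r(x)$ rather than $B_\delta(x)$, avoiding a couple of small slips in the paper's displayed inequality chain (which, as written, compares a fixed $\delta$-dependent quantity to a $r$-dependent volume and has the factor $(r_0/h)^d$ where $(h/r_0)^d$ is intended).
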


\begin{proof}
It suffices to show 
 \begin{align}
&\vol (B_r(x)\cap D)\geq C'\cdot\vol(B_r(x)),&\forall x\in D, 0<r\leq h\label{smalltech}
\end{align}
holds for some small constant $0<h\leq r_0$ and finite constant $C'$. Indeed, if \eqref{smalltech} holds, then for any $h<r\leq r_0$ and $x\in D$, 
\begin{align}
\vol (B_r(x)\cap D)\geq\vol (B_h(x)\cap D)\geq C'\cdot\vol(B_h(x)) \geq C'\left(\frac{r_0}{h}\right)^d\cdot\vol(B_r(x)).
\end{align}
Setting $C = C'(\frac{r_0}{h})^d$ yields the desired result in \eqref{tech}. 

To show \eqref{smalltech} holds for sufficiently small $h$, since $D$ satisfies the weak uniform interior cone condition with radius parameter $\delta$ and angle parameter $\theta$, for any $0<r\leq\delta$ and $x\in D$,
\begin{align}
\vol (B_r(x)\cap D)\stackrel{\eqref{mycone}}{\geq}\vol ((x+C(v_x))\cap B_\delta(x))&\geq\frac{\delta\cos\theta}{d}\cdot(\delta\sin\theta)^{d-1}\omega_{d-1}\nonumber\\
& = \underbrace{\frac{\cos\theta}{d}\cdot(\sin\theta)^{d-1}\frac{\omega_{d-1}}{\omega_d}}_{: = \widetilde{C}}\vol(B_r(x)),\label{pou}
\end{align}
where $w_s$ is the volume of the unit ball in $\R^{s}$. 
Note $\widetilde{C}$ defined in \eqref{pou} is positive and independent of $x$ and $r$. Thus, \eqref{smalltech} holds with $h = \delta$ and the same $C'=\widetilde{C}$.
\end{proof}

The weak uniform interior cone condition is reasonable under mild regularity assumptions on the domain.
In particular, convex compact sets in $\R^d$ with non-empty interior satisfy the weak interior cone condition: 

\begin{lemma}\label{lem:mdc2}
If $D\subset\R^d$ is a compact convex set with non-empty interior, then $D$ satisfies the weak uniform interior cone condition.
\end{lemma}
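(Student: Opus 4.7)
My plan is to exploit the fact that $D$ contains some ball $B_r(x_0)$ (from the non-empty interior assumption) together with convexity: for any $x \in D$ the convex hull of $\{x\} \cup B_r(x_0)$ lies in $D$ and contains a solid cone with apex at $x$ pointing toward $x_0$. Since $D$ is compact, $L := \mathrm{diam}(D) < \infty$, so the aperture of that cone can be chosen uniformly in $x$.

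Concretely, I would pick $\delta := r/2$ and $\theta := \arctan(r/L) \in (0, \pi/2)$, and define $v_x$ case-by-case: any unit vector if $x \in B_{r/2}(x_0)$, and otherwise $v_x := (x_0 - x)/|x_0 - x|$. For the first case, the triangle inequality gives $B_\delta(x) \subseteq B_r(x_0) \subseteq D$, so \eqref{mycone} is immediate. For the second case ($|x - x_0| \geq r/2$), I would take any $z \in C(v_x)$ with $|z| \leq \delta$ and decompose $z = s\, v_x + w$ with $w \perp v_x$, $0 < s \leq |z|$, and $|w| < s \tan\theta = sr/L \leq sr/|x - x_0|$. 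Then I would write
\begin{equation*}
x + z \;=\; \Bigl(1 - \tfrac{s}{|x-x_0|}\Bigr) x \;+\; \tfrac{s}{|x-x_0|}\Bigl( x_0 + \tfrac{|x-x_0|}{s}\, w \Bigr),
\end{equation*}
noting that $s \leq |z| \leq r/2 \leq |x-x_0|$ puts the convex weight in $[0,1]$, while $\bigl|(|x-x_0|/s)\, w\bigr| < r$ places the second point in $B_r(x_0) \subseteq D$; convexity of $D$ then gives $x + z \in D$.

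The only substantive obstacle is ensuring the aperture $\theta$ can be made uniform in $x$, and bounding $|x - x_0| \leq L$ via compactness is precisely what lets us use the single value $\theta = \arctan(r/L)$ for all $x$. Everything else is routine Euclidean geometry and the convexity of $D$.
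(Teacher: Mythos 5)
Your proof is correct and follows essentially the same route as the paper's: pick a ball in the interior, use compactness to bound the diameter, split into cases depending on whether $x$ is near the ball's center, and for far points take the cone axis pointing toward the center and argue via convexity that the cone lies in the convex hull of $\{x\}$ and the interior ball. The only substantive difference is that you spell out the convex-combination computation (decomposing $z = s\,v_x + w$ and exhibiting $x+z$ as a convex combination of $x$ and a point in $B_r(x_0)$) that the paper summarizes as ``it can be verified that''; the angle parameterization ($\arctan(r/L)$ vs.\ the paper's $\arcsin(\delta/L)$) is a cosmetic difference.
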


\begin{proof}
As $D$ is compact, $D$ is bounded and denote its diameter as $L<\infty$. 
Since $D$ has a non-empty interior, there exists $x\in D$ and $\delta>0$ such that $B_{3\delta}(x)\subset D$. 
It is clear that for all $z\in B_{2\delta}(x)$ with $r\leq \delta$, $B_r(z)\subset B_{3\delta}(x)\subset D$, i.e., \eqref{mycone} is satisfied for $z\in B_{2\delta}(x)$ with radius parameter $\delta$ and any angle parameter $\theta\in (0,\frac{\pi}{2}]$.  

On the other hand, for any $z\in D\setminus B_{2\delta}(x)$, since $D$ is convex, the convex hull of $B_\delta(x)$ and $z$, denoted by $CH_z$, is contained in $D$. Choose $v_z = \frac{x-z}{\|x-z\|}$. 
Since the distance between $z$ and $B_\delta(x)$ is at least $\delta$, it can be verified that for any $0<r\leq \delta$, 
\begin{align*}
&(z+\{y: \langle y, v_z\rangle>|y|\cos\nu\})\cap B_r(z)\subset CH_z\subset D&\nu = \arcsin\left(\frac{\delta}{L}\right).
\end{align*}
Putting both cases together, we conclude that $D$ satisfies the weak uniform interior cone condition with radius parameter $\delta$ and angle parameter $\nu$. 
\end{proof}

Lemma \ref{lem:mdc1} and Lemma \ref{lem:mdc2} together implies the following theorem:

\begin{theorem}
If $D\subset\R^d$ is a compact convex domain, for any $r_0>0$, there exists a constant $C$ that depends only on $r_0$ and $D$, such that
\begin{align}
&\vol (B_r(x)\cap D)\geq C\cdot\vol(B_r(x)),&\forall x\in D, 0<r\leq r_0.\label{mygoal}
\end{align}
\end{theorem}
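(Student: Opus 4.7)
The plan is to simply chain the two preceding lemmas together, since they almost immediately yield the theorem. First I would note that the statement implicitly requires $D$ to have non-empty interior (otherwise both sides involve a degenerate comparison with $d$-dimensional Lebesgue measure, and the claim is either trivial or vacuous), so I would state this as a standing hypothesis consistent with Lemma \ref{lem:mdc2}.

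Next, I would invoke Lemma \ref{lem:mdc2} on $D$: since $D$ is compact, convex, and has non-empty interior, there exist a radius parameter $\delta > 0$ and an angle parameter $\theta \in (0, \pi/2]$, both determined entirely by $D$, such that $D$ satisfies the weak uniform interior cone condition in the sense of Definition \ref{def:uicc}. With these parameters in hand, I would then apply Lemma \ref{lem:mdc1} directly: given the prescribed $r_0 > 0$, this lemma produces a constant $C$ depending only on $D$ and $r_0$ (the proof of Lemma \ref{lem:mdc1} shows $C$ can be written in terms of $\widetilde{C} = \frac{\cos\theta}{d}(\sin\theta)^{d-1}\frac{\omega_{d-1}}{\omega_d}$ together with $\delta$ and $r_0$) such that \eqref{mygoal} holds for all $x \in D$ and $0 < r \leq r_0$.

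Since the argument is a two-line composition of existing results, there is no substantive obstacle; the only thing to be careful about is the dependence of constants. I would emphasize in the writeup that the radius parameter $\delta$ and angle parameter $\theta$ from Lemma \ref{lem:mdc2} depend only on $D$ (through its diameter $L$ and an interior ball $B_{3\delta}(x) \subset D$), so the final $C$ truly depends only on $r_0$ and $D$ as claimed. If the authors wish to keep the proof self-contained, I would end with a single displayed line combining \eqref{tech} and the cone-condition derivation, rather than repeating the computations already carried out in the two lemmas.
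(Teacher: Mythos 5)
Your proposal is correct and matches the paper's argument exactly: the paper also obtains the theorem by composing Lemma \ref{lem:mdc2} (compact convex with non-empty interior implies the weak uniform interior cone condition) with Lemma \ref{lem:mdc1} (weak uniform interior cone condition implies the volume-ratio bound). Your remark about the implicit non-empty-interior hypothesis is consistent with the paper, which states earlier in the section that the compact convex domain $D$ is assumed to have non-empty interior.
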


In the rest of the section, we will denote by $\alpha_{D,r_0}$ the largest constant such that \eqref{mygoal} is satisfied, i.e.,
\begin{align}
\alpha_{D,r_0} = \sup\{C: \text{\eqref{mygoal} is satisfied}\}.\label{myalpha}
\end{align}

\subsection{Sampling from the uniform measure}\label{611}
Suppose that $\mathcal A_n$ is a set of points independently and uniformly sampled from $D$. 
The next theorem shows that when $M_n$ is logarithmically larger than the cardinality of some optimal covering of $D$, with overwhelming probability, $b=0$. 
\begin{theorem}\label{th:am}
  Given $\mu$, $D$, and $\left\{ V_n \right\}_{n=0}^\infty$, define
  \begin{align*}
    &\calA_n \coloneqq \left\{X_{m} \right\}_{m=1}^{M_n}& X_m \stackrel{\text{i.i.d.}}{\sim}\mathsf{Unif}(D).
  \end{align*}
  Assume that $D$ is compact and convex and elements in $V_n$ are twice continuously differentiable. 
  Fix $k>2$ and $r> 1$. Let $\mathcal N_n\subset D$ be a $(kR_\mu(V_n))^{-1}$-covering of $D$ with $|\mathcal N_n|\geq N_n$. 
  There exists a constant $C>0$ that does not depend on $n$, such that if $M_n \geq C(r+1)|\mathcal N_n|\log |\mathcal N_n|$, then with probability at least $1-N_n^{-r}$,  
  \begin{align*}
  \|v\|_\infty\leq\frac{k}{k-2}\|v\|_{\mathcal A_n, \infty}.
  \end{align*}
\end{theorem}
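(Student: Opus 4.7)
The plan is a covering-plus-Lipschitz argument, split into a deterministic gradient bound and a probabilistic step ensuring every ball of the covering $\mathcal{N}_n$ contains at least one sample.

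\textbf{Step 1 (Lipschitz bound on $V_n$).} For any $v \in V_n$, expand $v = \sum_{i=1}^{N_n} \alpha_i v_i$ in an $L^2_\mu$-orthonormal basis for $V_n$. Since $\mu$ is a probability measure, $\|\alpha\|_2 = \|v\|_{2,\mu} \leq \|v\|_\infty$, and Cauchy--Schwarz gives
\begin{align*}
\|\nabla v(x)\| = \left\|\sum_{i=1}^{N_n} \alpha_i \nabla v_i(x)\right\| \leq \|\alpha\|_2 \left(\sum_{i=1}^{N_n} \|\nabla v_i(x)\|^2\right)^{1/2} \leq \|v\|_\infty R_\mu(V_n),
\end{align*}
which, since $D$ is convex, makes $v$ globally Lipschitz on $D$ with constant $\|v\|_\infty R_\mu(V_n)$.

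\textbf{Step 2 (Covering probability).} Set $\epsilon_n := (kR_\mu(V_n))^{-1}$. Since $V_n \subset V_{n+1}$ implies $R_\mu(V_n)$ is non-decreasing, $\epsilon_n$ is non-increasing, so Lemma \ref{lem:mdc1} and Lemma \ref{lem:mdc2} together produce a constant $\alpha_D>0$ (valid for all relevant radii) such that for every $y \in \mathcal{N}_n$,
\begin{align*}
p_y := \mathrm{Pr}\bigl[X_1 \in B_{\epsilon_n}(y) \cap D\bigr] \geq \frac{\alpha_D \vol(B_{\epsilon_n}(0))}{\vol(D)}.
\end{align*}
A standard volume argument shows that any $\epsilon_n$-covering of $D$ has cardinality $\gtrsim \vol(D)/\vol(B_{\epsilon_n}(0))$, so $p_y \geq c/|\mathcal{N}_n|$ uniformly in $y$ for a constant $c>0$ depending only on $D$ and $d$. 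Using $(1-p_y)^{M_n} \leq \exp(-M_n p_y)$ and a union bound over the $|\mathcal{N}_n|$ centers,
\begin{align*}
\mathrm{Pr}\bigl[\exists y \in \mathcal{N}_n:\ \mathcal{A}_n \cap B_{\epsilon_n}(y) = \varnothing\bigr] \leq |\mathcal{N}_n|\exp\!\left(-\frac{cM_n}{|\mathcal{N}_n|}\right).
\end{align*}
Choosing $C = 1/c$ and using $|\mathcal{N}_n| \geq N_n$ (so $\log N_n \leq \log |\mathcal{N}_n|$), the hypothesis $M_n \geq C(r+1)|\mathcal{N}_n|\log|\mathcal{N}_n|$ forces the right-hand side to be at most $N_n^{-r}$.

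\textbf{Step 3 (Combine).} On this good event, given any $v \in V_n$, pick $x^* \in D$ with $|v(x^*)| = \|v\|_\infty$, a nearest center $y \in \mathcal{N}_n$ with $\|x^*-y\| \leq \epsilon_n$, and a sample $X_m \in B_{\epsilon_n}(y)$, so $\|X_m - x^*\| \leq 2\epsilon_n$. Step 1 then gives
\begin{align*}
|v(X_m)| \geq |v(x^*)| - 2\epsilon_n \|v\|_\infty R_\mu(V_n) = \|v\|_\infty\bigl(1 - \tfrac{2}{k}\bigr) = \tfrac{k-2}{k}\|v\|_\infty,
\end{align*}
and rearrangement yields the claimed inequality. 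The main obstacle is the constant bookkeeping in Step 2: uniformly lower-bounding $p_y$ by $c/|\mathcal{N}_n|$ via the measure-density constant requires $\epsilon_n$ to stay below a fixed $r_0$ (handled by monotonicity of $R_\mu(V_n)$), and reconciling the $\log|\mathcal{N}_n|$ versus $\log N_n$ scaling uses $|\mathcal{N}_n|\geq N_n$; after that, everything else is routine.
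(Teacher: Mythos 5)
Your proposal is correct and follows essentially the same strategy as the paper's proof: a gradient/Lipschitz bound giving $|v(x)-v(y)|\leq\|x-y\|\,\|v\|_{2,\mu}R_\mu(V_n)$, the measure-density constant from Lemmas~\ref{lem:mdc1}--\ref{lem:mdc2} to lower-bound the probability mass of each covering ball, the covering cardinality bound $\vol(B_{\epsilon_n})\geq\vol(D)/|\mathcal{N}_n|$, and a union bound over $\mathcal{N}_n$. The only cosmetic differences are that the paper normalizes $\|v\|_{2,\mu}=1$ (and then invokes $\|v\|_\infty\geq\|v\|_{2,\mu}=1$ in the final rearrangement, where you instead carry $\|v\|_\infty$ through the Lipschitz constant) and derives the Lipschitz bound via the Fundamental Theorem of Calculus along a segment rather than a pointwise gradient estimate; both are equivalent.
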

\begin{proof}
Take 
$$v(x)=\sum_{i\in [N_n]}\langle v, v_i\rangle_\mu v_i(x)\in V_n.$$
Without loss of generality assume $\|v\|_{2,\mu}=\sum_{i\in [N_n]}\langle v, v_i\rangle_\mu^2=1$, otherwise consider $v/\|v\|_{2, \mu}$. 
For $x,y\in D$, the segment connecting $x$ and $y$ is in $D$ under the convexity assumption. 
Set $L = \|y-x\|_2$ and $z = L^{-1}(y-x)$. 
Applying the Fundamental Theorem of Calculus, we have
\begin{align}
\left|v(y)-v(x)\right| &= \left|\int_0^{L}\langle\nabla v(x+tz),z\rangle \dx{t}\right| \\
&= \left|\sum_{i\in [N_n]} \langle v, v_i\rangle_{\mu}\int_0^{L}\langle\nabla v_i(x+tz), z\rangle \dx{t}\right|\nonumber\\
&\leq \left(\sum_{i\in [N_n]}\left|\int_0^{L}\langle\nabla v_i(x+tz), z\rangle \dx{t}\right|^2\right)^{1/2}\nonumber\\
&\leq \left(\sum_{i\in [N_n]}L\int_0^{L}\left|\langle\nabla v_i(x+tz), z\rangle\right|^2 \dx{t}\right)^{1/2}\nonumber\\
&\leq\left(L\int_0^{L}\sum_{i\in [N_n]}\|\nabla v_i(x+tz)\|^2 \ \dx{t}\right)^{1/2}\leq LR_\mu(V_n),\label{am:1} 
\end{align}
which implies that $|v(x)-v(y)|<\epsilon$ if $\|x-y\|_2<R_\mu(V_n)^{-1}\epsilon$.   Now take $\epsilon = 1/k$, and let 
\begin{align}
\eta = (kR_\mu(V_n))^{-1}\leq (kR_\mu(V_0))^{-1}\leq r_0: = (k\min_{x\in D}R_0(x))^{-1},\label{eta}
\end{align} 
where the inequality is due to the hierarchical structure of $V_n$.

Let $\mathcal N_n\subset D$ be an $\eta$-covering of $D$ with $|\mathcal N_n|\geq N_n$. 
For any $y\in\mathcal N_n$, 
\begin{align}
\vol(D)\leq\vol\left(\cup_{z\in\mathcal N_n}B_\eta(z)\right) = |\mathcal N_n|\cdot\vol(B_\eta(y))\Longrightarrow\vol(B_\eta(y))\geq\frac{\vol(D)}{|\mathcal N_n|}.\label{tirdy}
\end{align}
Suppose that $\mathcal A_n\subset D$ satisfies
\begin{align}
&\mathcal A_n\cap B_{\eta}(y)\neq\varnothing&\forall y\in\mathcal A_n. \label{event}
\end{align}  
It follows from the triangle inequality that for every element in $D$ one can find a point in $\mathcal A_n$ such that their distance is at most $2\eta$. Therefore, 
\begin{align}
\frac{\|v\|_\infty}{\|v\|_{\mathcal A_n, \infty}}\stackrel{k>2}{\leq}\frac{\|v\|_\infty}{\|v\|_{\infty}-\frac{2}{k}}\stackrel{\|v\|_\infty\geq \|v\|_{2,\mu}=1}{\leq}\frac{k}{k-2}.\label{am:am}
\end{align}
We now show that for sufficiently large $M_n$, with high probability $\mathcal A_n$ satisfies \eqref{event}.
In fact, for $y\in\mathcal N_n$, 
\begin{align*}
\mathrm{Pr}\left[B_ {\eta}(y)\cap\{X_m\}_{m\in [M_n]}=\varnothing\right] &= \mathrm{Pr}\left[B_ {\eta}(y)\cap X_1=\varnothing\right]^{M_n}\\
&=\left(1-\frac{\vol(B_{\eta}(y)\cap D)}{\vol(D)}\right)^{M_n}\\
&\stackrel{\eqref{myalpha}}{\leq}\left(1-\frac{\alpha_{D,r_0}\vol(B_{\eta}(y))}{\vol(D)}\right)^{M_n}\\
&\stackrel{\eqref{tirdy}}{\leq}\left(1-\frac{\alpha_{D,r_0}}{|\mathcal N_n|}\right)^{M_n}
\leq e^{-\frac{\alpha_{D,r_0}M_n}{|\mathcal N_n|}}.
\end{align*}
Taking a union bound over $y$ yields that the probability of $\mathcal A_n$ not satisfying \eqref{event} is at most $|\mathcal N_n|e^{-\frac{\alpha_{D,r_0}M_n}{ |\mathcal N_n|}}$. Setting $M_n = \alpha^{-1}_{D,r_0}(r+1)|\mathcal N_n|\log|\mathcal N_n|$ completes the proof. 
\end{proof}

\subsection{Sampling from $\nu_n$}\label{612}
Note that in the derivation of \eqref{am:1} the integrand is directly bounded by $R_\mu(V_n)$, which is a global quantity of $V_n$. 
We now propose an alternative random sampling strategy which aims to exploit the local structures of $V_n$ so that the analysis of covering becomes more efficient. 
Precisely, we wish to cover points with large gradients using smaller balls.  
Keeping the steps before the last step in \eqref{am:1} yields
\begin{align}
|v(y)-v(x)|\leq L\underbrace{\left(\frac{1}{L}\int_0^L R_n(x+tz)^2\dx{t}\right)^{1/2}}_{(*)}.\label{am:loc}
\end{align}
When $L$ is small, $(*)\approx R_n(x)$, so the local Lipschitz constant of $v$ at $x$ is bounded by $R_n(x)$. 
In other words, evaluation of $v$ at points within $R_n(x)^{-1}\epsilon$ distance to $x$ change from $v(x)$ by approximately $\epsilon$. 
Based on this observation, we propose an alternative sampling strategy making use of such local information, resulting in the Theorem below.

\begin{theorem}\label{th:amn}
Under the same assumption in Theorem \ref{th:am}, let
\begin{align*}
X_m\stackrel{\text{i.i.d.}}{\sim}\nu_n,
\end{align*}
where $\nu_n$ is defined in \eqref{am:nu}.
Fix $k>2$ and $r>1$. 
Let $\mathcal N_n\subset D$ be an $R(x)^{-1}$-weighted $(kR_\mu(V_n))^{-1}$-covering of $D$. 
There exists a constant $C>0$ that does not depend on $n$, such that if $M_n \geq C(r+1)G_{\mathcal{N}_n}|\mathcal N_n|\log |\mathcal N_n|$ with $|\mathcal N_n|\geq N_n$, then with probability at least $1-N_n^{-r}$, 
  \begin{align*}
  \|v\|_\infty\leq\frac{k}{k-2}\|v\|_{\mathcal A_n, \infty}.
  \end{align*}
\end{theorem}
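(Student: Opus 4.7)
The plan is to mimic the deterministic--probabilistic structure used in the proof of Theorem \ref{th:am}, but replace the global Lipschitz estimate $L R_\mu(V_n)$ by the local estimate \eqref{am:loc}, and the uniform $\epsilon$-cover by the $R_n^{-1}$-weighted cover $\mathcal{N}_n$. As before I would fix $v \in V_n$ with $\|v\|_{2,\mu}=1$, so that $\|v\|_\infty\geq 1$, and reduce the target inequality to showing that every $x\in D$ admits some $X\in\mathcal{A}_n$ with $|v(x)-v(X)|\leq 2/k$.

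For the deterministic step, set $\epsilon_n = (kR_\mu(V_n))^{-1}$ and assume the hit event $\mathcal{A}_n\cap B_{r_y}(y)\neq\varnothing$ holds for every $y\in\mathcal{N}_n$, where $r_y\leq r(y,\epsilon_n)$ is the weighted covering radius. For any $x\in D$, choose $y\in\mathcal{N}_n$ with $x\in B_{r_y}(y)$ and $X\in\mathcal{A}_n\cap B_{r_y}(y)$; convexity of $D$ and of the ball ensures the segment from $x$ to $X$ lies inside $B_{r_y}(y)\cap D$. Applying \eqref{am:loc} gives $|v(x)-v(X)|\leq 2r_y\max_{z\in B_{r_y}(y)\cap D}R_n(z)$, and the crucial identity coming from \eqref{nice} applied to $f=R_n^{-1}$ is
\begin{equation*}
r(y,\epsilon_n)\cdot\max_{z\in B_{r(y,\epsilon_n)}(y)\cap D}R_n(z) = \frac{1}{k},
\end{equation*}
which combined with $r_y\leq r(y,\epsilon_n)$ yields $|v(x)-v(X)|\leq 2/k$, closing the deterministic reduction.

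For the probabilistic step, I would take the maximal choice $r_y = r(y,\epsilon_n)$ and lower bound $\Pr[X_1\in B_{r_y}(y)]$ uniformly in $y$ by combining four ingredients: (i) the density of $\nu_n$ from \eqref{am:nu}; (ii) the measure density bound $\vol(B_{r_y}(y)\cap D)\geq \alpha_{D,r_0}\omega_d r_y^d$ with $\omega_d$ the volume of the Euclidean unit ball, valid for a fixed $r_0$ since $R_n$ is non-decreasing in $n$ by the hierarchy of $V_n$, which follows from Lemma \ref{lem:mdc1} via Lemma \ref{lem:mdc2}; (iii) the ratio bound $\min_{B_{r_y}}R_n\geq \max_{B_{r_y}}R_n/G_{\mathcal{N}_n}^{1/d}$ coming directly from the definition of $G_{\mathcal{N}_n}$ in \eqref{myG}; and (iv) the identity displayed above. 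Together these give $\Pr[X_1\in B_{r_y}(y)]\geq \alpha_{D,r_0}\omega_d/(k^d G_{\mathcal{N}_n}\int_D R_n^d\dx{x})$. To remove the $\int_D R_n^d$ factor I would use a self-consistency bound obtained from integrating $R_n^d$ across the cover itself: $\int_D R_n^d\dx{x}\leq \sum_{y\in\mathcal{N}_n}(\max_{B_{r_y}}R_n)^d\vol(B_{r_y}(y))\leq \omega_d|\mathcal{N}_n|/k^d$, yielding $\Pr[X_1\in B_{r_y}(y)]\geq \alpha_{D,r_0}/(G_{\mathcal{N}_n}|\mathcal{N}_n|)$. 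A standard Bernoulli-complement plus union-bound argument then produces the hit event with probability at least $1-|\mathcal{N}_n|^{-r}\geq 1-N_n^{-r}$ once $M_n\geq C(r+1)G_{\mathcal{N}_n}|\mathcal{N}_n|\log|\mathcal{N}_n|$ with $C=\alpha_{D,r_0}^{-1}$, exactly as in the end of the proof of Theorem \ref{th:am}.

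The main obstacle is orchestrating (i)--(iv) so that the appearance of $\int_D R_n^d\dx{x}$ cancels exactly against its own upper bound deduced from the weighted cover. This cancellation, together with the identity from \eqref{nice}, is what removes the explicit $R_\mu(V_n)$ factor present in the sampling complexity of Theorem \ref{th:am} and replaces it by the localized quantity $G_{\mathcal{N}_n}|\mathcal{N}_n|$; it is precisely where the tailored density $\dx{\nu_n}\propto R_n^d\,\dx{x}$ contributes.
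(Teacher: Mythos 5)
Your proposal is correct and takes essentially the same route as the paper's proof: the deterministic reduction to the hit event using the localized Lipschitz estimate \eqref{am:loc}, the key identity $r(y,\epsilon_n)\,\max_{B_{r(y,\epsilon_n)}(y)\cap D}R_n = 1/k$ derived from \eqref{nice}, the lower bound on the ball-hitting probability via the measure-density constant $\alpha_{D,r_0}$ and $G_{\mathcal{N}_n}$, and the self-consistency bound $\int_D R_n^d\,dx \leq \omega_d|\mathcal N_n|/k^d$ that cancels the normalization of $\nu_n$ are all present in, and match, the paper's argument. The only cosmetic difference is that you estimate $|v(x)-v(X)|$ along the single segment of length at most $2r_y$, whereas the paper splits it as $|v(x)-v(y)|+|v(y)-v(x')|$ through the covering center; both yield the same bound.
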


Before giving the proof, we note that Theorem \ref{th:amn} combined with definition \eqref{p^*} immediately implies the following result:

\begin{theorem}\label{thm:nun-AM}
Under the same condition as in Theorem \ref{th:amn} and assuming $p^*$ defined in \eqref{p^*} is finite, with probability $1$ (where the underlying probability space is the product space of the samples $X_m$), the sequence $\{\mathcal A_n\}_{n=0}^\infty$ generated by $\nu_n$ forms an asymptotic AM for $\{V_n\}_{n=0}^\infty$ with $a = p^{*}+\tau$ and $b=0$ for any $\tau>0$. 
\end{theorem}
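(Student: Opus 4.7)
The plan is to apply Theorem \ref{th:amn} pointwise in $n$ with a near-optimal choice of the auxiliary weighted covering $\mathcal N_n$ dictated by the definition of $p^*$ in \eqref{p^*}, and then to promote the resulting per-$n$ probability guarantees to an almost-sure asymptotic AM statement by a standard Borel--Cantelli argument, as was done for $\mu$-sampling in Theorem \ref{thm:mu-wam}.

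Fix $\tau>0$, take $k=3$ in Theorem \ref{th:amn}, and pick any $r>1$. For each sufficiently large $n$, the definition of $p^*$ in \eqref{p^*} (together with the remark after that display that the constant $3$ may be replaced by any constant exceeding $2$) furnishes an $R_n^{-1}$-weighted $(3R_\mu(V_n))^{-1}$-covering $\mathcal N_n$ of $D$ satisfying
\begin{align*}
|\mathcal N_n|\,G_{\mathcal N_n}\;\leq\; N_n^{\,p^*+\tau/2}.
\end{align*}
If this covering fails the cardinality hypothesis $|\mathcal N_n|\geq N_n$ required by Theorem \ref{th:amn}, enlarge it by adjoining a standard uniform-radius $(3R_\mu(V_n))^{-1}$-covering of $D$; this inflates the product $|\mathcal N_n|G_{\mathcal N_n}$ by at most a polynomial factor in $N_n$, which can still be absorbed into the slack $\tau$ at the cost of taking $n$ larger. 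Now set $M_n:=\lceil C(r+1)\,G_{\mathcal N_n}|\mathcal N_n|\log|\mathcal N_n|\rceil$, with $C$ the constant from Theorem \ref{th:amn}, and draw $\mathcal A_n=\{X_1,\ldots,X_{M_n}\}$ i.i.d.\ from $\nu_n$. Theorem \ref{th:amn} then yields, with probability at least $1-N_n^{-r}$, the bound $\|v\|_\infty\leq \tfrac{k}{k-2}\|v\|_{\infty,\mathcal A_n}=3\|v\|_{\infty,\mathcal A_n}$ for every $v\in V_n$; simultaneously, the displayed bound on $|\mathcal N_n|G_{\mathcal N_n}$ gives $M_n\leq k_a\,N_n^{\,p^*+\tau}$ for some $k_a=k_a(\tau,r)<\infty$ and all large $n$, because the extra $\log|\mathcal N_n|\lesssim\log N_n$ factor can be swallowed by increasing $\tau/2$ to $\tau$.

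To finish, since $\{N_n\}$ is strictly increasing in $\mathbb{N}$ we have $\sum_n N_n^{-r}<\infty$, so the Borel--Cantelli lemma implies that, almost surely, only finitely many of the per-$n$ failure events occur. Hence there is a random $N(\omega)<\infty$ such that for every $n\geq N(\omega)$ both the norm equivalence (with constant $3$) and the cardinality estimate $|\mathcal A_n|\leq k_a N_n^{\,p^*+\tau}$ hold, which by Definition \ref{def:Vwam} is exactly the asymptotic AM property with $a=p^*+\tau$, $b=0$. The main subtlety is the first step: converting the lim-sup definition of $p^*$ into a usable per-$n$ bound on $|\mathcal N_n|G_{\mathcal N_n}$ that simultaneously satisfies the cardinality constraint $|\mathcal N_n|\geq N_n$ of Theorem \ref{th:amn}; the padding argument handles this but introduces a polynomial overhead, which is why the arbitrary slack $\tau$ is consumed in stages.
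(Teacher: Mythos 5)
Your proposal is correct and follows essentially the same route as the paper's proof: extract near-minimizers $\mathcal N_n$ from the $\limsup$ defining $p^*$, apply Theorem~\ref{th:amn} to get per-$n$ guarantees with failure probability $N_n^{-r}$, and upgrade to an almost-sure asymptotic statement via Borel--Cantelli. The one place you go beyond the paper is in explicitly flagging and patching the cardinality hypothesis $|\mathcal N_n|\geq N_n$ of Theorem~\ref{th:amn} (via padding with a uniform $(3R_\mu(V_n))^{-1}$-covering), a point the paper's own proof silently bypasses; this is a worthwhile observation, though to make the ``polynomial inflation'' claim airtight under only the assumption that $p^*<\infty$ one would want to spell out why the uniform covering number and the induced increase in $G_{\mathcal N_n}$ are both polynomial in $N_n$ (e.g.\ by additionally invoking the hypotheses under which $p^*$ is shown finite, as in Corollary~\ref{cor:pfinite}).
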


\begin{proof}
Since $p^*$ defined in \eqref{p^*} is finite, for any $\tau>0$, there exists a sequence $\{\mathcal N_n\}_{n=0}^\infty$ such that $\mathcal N_n$ is an $f$-weighted $(3R_\mu(V_n))^{-1}$-covering of $D$ and
\begin{align*}
\limsup_{n\to\infty}\frac{\log\left(|\mathcal {N}_n| G_{\mathcal {N}_n}\right)}{\log N_n} <p^*+\frac{\tau}{2}.
\end{align*} 
The corresponding $M_n$ derived in Theorem \ref{th:amn} satisfies
\begin{align*}
\limsup_{n\to\infty}\frac{M_n}{\log N_n}&=\limsup_{n\to\infty}\left(\frac{\log C}{\log N_n}+\frac{\log(r+1)}{\log N_n}+\frac{\log\left(|\mathcal {N}_n| G_{\mathcal {N}_n}\right)}{\log N_n}+\frac{\log\log |\mathcal N_n|}{\log N_n}\right)\\
& <p^*+\frac{\tau}{2}.
\end{align*}
Thus, there exists some constant $C_\tau$ such that $M_n\leq C_\tau N_n^{p^*+\tau}$ for all $n\in\N$. 
On the other hand, Theorem \ref{th:amn} tells us that for every $n$ with sampling size $M_n$,  
\begin{align*}
\text{Pr}\left[ \|v\|_\infty> 3\|v\|_{\mathcal A, \infty}\right]\leq N_n^{-r}\leq n^{-r}. 
\end{align*} 
The proof is completed by an application of the Borel-Cantelli lemma as in the proof of Theorem \ref{thm:mu-wam}.  
\end{proof}

Since $\nu_{n} \ll \mu$ for any $n$, then combining the above result with Theorem \ref{thm:WAM-con}, produces a (classical) AM.
\begin{corollary}\label{cor:nun-AM}
  Assume the conditions of Thereom \ref{thm:nun-AM}. If in addition $\{V_n\}_{n=0}^\infty$ is a $\mu$ZC sequence, then with probability 1 $\{\mathcal{A}_n\}_{n=0}^\infty$ is a (classical) AM with exponents $a = p^* + \tau$, and $b = 0$ for any $\tau > 0$.
\end{corollary}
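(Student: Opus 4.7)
The strategy is simply to chain Theorem~\ref{thm:nun-AM} with Theorem~\ref{thm:WAM-con}; no new probabilistic estimate or covering argument is required, and the structure mirrors that of Corollaries~\ref{cor:mu-wam} and \ref{cor:muV-WAM}.

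First, I would invoke Theorem~\ref{thm:nun-AM} to obtain a probability-$1$ event on which $\{\mathcal{A}_n\}_{n=0}^\infty$ is an \emph{asymptotic} AM for $\{V_n\}_{n=0}^\infty$ with exponent pair $(a,b)=(p^\ast+\tau,0)$. This supplies the random threshold $N(\omega)$ and the finite constants $k_a(\omega), k_b(\omega)$ controlling mesh size growth and discrete-continuum equivalence, respectively, for all $n\geq N(\omega)$.

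Second, I would verify the two hypotheses of Theorem~\ref{thm:WAM-con}. The $\mu$ZC condition on $\{V_n\}_{n=0}^\infty$ is assumed directly in the corollary statement. The absolute-continuity condition $\nu_n \ll \mu$ for all $n$ is noted in the paragraph preceding the corollary; it follows from the fact that $\nu_n$ has a continuous Lebesgue density proportional to $R_n(x)^d$ on $D$, combined with the ambient assumption that $\mu$ dominates Lebesgue measure on $D$ (which is the natural setting in which the $\mu$ZC framework is nontrivial, cf.\ Proposition~\ref{eq:muzc-analytic}). With both hypotheses in hand, Theorem~\ref{thm:WAM-con} applies and promotes the asymptotic AM to a classical AM, preserving the exponent pair $(a,b)$.

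The only technical point worth emphasizing is that Theorem~\ref{thm:WAM-con} carries the parameters $(a,b)$ through unchanged, so $b=0$ is automatically inherited and the resulting mesh is a genuine (not merely asymptotic) AM with $a=p^\ast+\tau$ and $b=0$. There is no substantive obstacle beyond checking $\nu_n \ll \mu$; the proof is essentially a bookkeeping composition of the two prior results.
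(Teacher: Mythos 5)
Your proposal is correct and follows exactly the same route as the paper: invoke Theorem~\ref{thm:nun-AM} for the asymptotic AM, then apply Theorem~\ref{thm:WAM-con} using the $\mu$ZC hypothesis together with $\nu_n\ll\mu$, as the paper itself does in the sentence immediately preceding the corollary. One small slip to flag: Proposition~\ref{eq:muzc-analytic} asserts $\mu\ll\vol$, not the reverse, so it does not support your parenthetical claim that $\mu$ dominates Lebesgue on $D$ --- your instinct that some hypothesis of the form $\vol|_D\ll\mu$ is needed to get $\nu_n\ll\mu$ is a fair observation about an implicit assumption the paper also glosses over, but the citation you reach for goes in the wrong direction.
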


\begin{proof}[Proof of Theorem \ref{th:amn}]
Assume that $\|v\|_{2,\mu}=\sum_{i\in [N_n]}\langle v, v_i\rangle_\mu^2=1$.
Let us consider an $f$-weighted $\eta$-covering of $D$ with 
\begin{align}
f=  R_n(x)^{-1},\label{f}
\end{align}
where $\eta$ is defined in \eqref{eta}.     
Note that every $\eta$-covering of $D$ is also an $f$-weighted $\eta$-covering of $D$; see Section \ref{wcs}.
With some abuse of notation, from now on let $\mathcal N_n$ be an $f$-weighted $\eta$-covering of $D$, i.e., the covering radius $r_y$ for every $y\in\mathcal N_n$ equals $r(y,\eta)\leq r_0$ (see \eqref{eta}), which can always be achieved by elongating $r_y$ if $r_y<r(y,\eta)$. 
A similar computation as before shows that if $\mathcal A_n\subset D$ has non-empty intersection with $B_{r(y, \eta)}(y)$ for every $y\in\mathcal N_n$, then
\begin{align}\label{eq:am-temp}
  \|v\|_\infty\leq \frac{k}{k-2}\|v\|_{\mathcal A_n, \infty}.
\end{align}
To see this, note that the non-empty intersection property implies that for $x\in \argmax_{u\in D} v(u)$, there exist $y\in\mathcal N_n$ and $x'\in\mathcal A_n$ such that $\max\{|y-x|, |y-x'|\}\leq r(y,\eta)$. Therefore,
\begin{align*}
\|v\|_{\mathcal A_n,\infty}\geq v(x')&\geq v(x)-|v(x)-v(y)|-|v(y)-v(x')|\\
&\stackrel{\eqref{am:loc}}{\geq}\|v\|_\infty-2r(y,\eta)\max_{z\in B_{r(y,\eta)}(y)}R_n(z)\\
&\stackrel{\eqref{nice}}{=}\|v\|_\infty-2\eta F_{r(y,\eta)}(y)\max_{z\in B_{r(y,\eta)}(y)}R_n(z)\\
&\stackrel{}{=}\|v\|_\infty-\frac{2}{k}\\
&\stackrel{\|v\|_\infty\geq \|v\|_{2,\mu}=1}{>}\|v\|_\infty -\frac{2\|v\|_\infty}{k},
\end{align*}
which shows \eqref{eq:am-temp}.

We next construct such sets $\mathcal A_n$ satisfying the non-empty intersection property using random sampling.   
To this end, we draw $X_m$ independently from a probability distribution $\nu_n$ which is defined in \eqref{am:nu}.
It is easy to check that for $y\in\mathcal N_n$, 
\begin{align}
\mathrm{Pr}\left[X_m\in B_{r(y,\eta)}(y)\right] = \frac{\int_{B_{r(y,\eta)}(y)\cap D}R_n(x)^d \dx{x}}{\int_D R_n(x)^d\dx{x}}\geq\frac{\int_{B_{r(y,\eta)}(y)\cap D}R_n(x)^d \dx{x}}{\sum_{z\in\mathcal N_n} \int_{B_{r(z,\eta)}(z)\cap D}R_n(x)^d \dx{x}}.\label{as}
\end{align}
The numerator of the last term in \eqref{as} can be bounded from below as
\begin{align}\label{num}
\int_{B_{r(y,\eta)}(y)\cap D}R_n(x)^d \dx{x}&\geq \vol(B_{r(y,\eta)}(y)\cap D)\min_{x\in B_{r(y,\eta)}(y)\cap D}R_n(x)^d\nonumber\\
&\stackrel{\eqref{myalpha}}{\geq} \alpha_{D,r_0}\vol(B_{r(y,\eta)}(y))\min_{x\in B_{r(y,\eta)}(y)\cap D}R_n(x)^d\nonumber\\
&=\alpha_{D,r_0}\vol(B_{1}(0))\cdot r(y,\eta)^d\min_{x\in  B_{r(y,\eta)}(y)\cap D}R_n(x)^d\nonumber\\
&\stackrel{\eqref{nice}}{=} \alpha_{D,r_0}\vol(B_{1}(0))\cdot (\eta F_{r(y,\eta)}(y))^d\min_{x\in B_{r(y,\eta)}(y)\cap D}R_n(x)^d\nonumber\\
&=\alpha_{D,r_0}\vol(B_{1}(0))\cdot\left(\frac{\min_{x\in B_{r(y,\eta)}(y)\cap D}R_n(x)}{k\max_{x\in B_{r(y,\eta)}(y)\cap D}R_n(x)}\right)^d\nonumber\\
&\geq \alpha_{D,r_0}\vol(B_{1}(0))\cdot\frac{1}{k^dG_{\mathcal{N}_n}},
\end{align}
The denominator of the last term in \eqref{as}, on the other hand, can be bounded from above by 
\begin{align}\label{dem}
\sum_{z\in\mathcal N_n} \int_{B_{r(z,\eta)}(z)\cap D} R_n(x)^d \dx{x}&\leq\sum_{z\in\mathcal N_n}\vol(B_{r(z,\eta)}(z)\cap D)\max_{x\in B_{r(y,\eta)}(y)\cap D}R_n(x)^d\\
&\leq\sum_{z\in\mathcal N_n}\vol(B_{r(z,\eta)}(z))\max_{x\in B_{r(y,\eta)}(y)\cap D}R_n(x)^d\nonumber\\
&=\sum_{z\in\mathcal N_n} \vol(B_{1}(0)) \cdot\left(r(z, \eta)\max_{x\in B_{r(y,\eta)}(y)\cap D}R_n(x)\right)^d\nonumber\\
&\stackrel{\eqref{nice}}{=} \vol(B_{1}(0))\cdot\frac{|\mathcal N_n|}{k^d}\nonumber.
\end{align}
Plugging \eqref{num} and \eqref{dem} into \eqref{as} yields
\begin{align*}
\mathrm{Pr}\left[X_m\in B_{r(y,\eta)}(y)\right] &\geq\frac{\alpha_{D,r_0}}{G_{\mathcal{N}_n}|\mathcal N_n|}.
\end{align*}
By a similar reasoning one can show that by taking $M_n= \alpha_{D,r_0}^{-1}(r+1)G_{\mathcal {N}_n}|\mathcal N_n|\log |\mathcal N_n|$, with probability at least $1-|\mathcal N_n|^{-r}\geq 1-N_n^{-r}$, $\left\{X_{m} \right\}_{m=1}^{M_n}$ intersects every ball in the covering of $\mathcal N_n$, completing the proof.
\end{proof}

\section*{Acknowledgements}
We would like to thank the anonymous referees for their very helpful comments which significantly improved the results and presentation of the paper. 
A.~Narayan thanks Norm Levenberg and Sione Ma'u for a careful reading of an early draft, and for providing several comments that greatly improved the quality of the manuscript. 
Y.~Xu thanks Piotr Hajlasz for a helpful conversation about the measure density condition. 
Y.~Xu and A.~Narayan are partially supported by NSF DMS-1848508. This material is based partially upon work supported by the National Science Foundation under Grant No. DMS-1439786 and by the Simons Foundation Grant No. 50736 while A.~Narayan was in residence at the Institute for Computational and Experimental Research in Mathematics in Providence, RI, during the ``Model and dimension reduction in uncertain and dynamic systems" program.

\printbibliography
  
\end{document}